\pdfoutput=1
\documentclass[11pt,twoside]{article}

\usepackage[margin=1in]{geometry}
\usepackage[T1]{fontenc}
\usepackage[utf8]{inputenc}
\usepackage{amsmath,amssymb,amsfonts,amsthm,amsopn}
\usepackage{bm}
\usepackage{braket}
\usepackage{mathrsfs}
\usepackage{stmaryrd}
\usepackage{array}
\usepackage{multirow}
\usepackage{makecell}
\usepackage{diagbox}
\usepackage{booktabs}
\usepackage{graphicx}
\usepackage{float}
\usepackage{cite}
\usepackage[hidelinks]{hyperref}
\usepackage[nameinlink,capitalize]{cleveref}
\usepackage{fancyhdr}

\numberwithin{equation}{section}
\allowdisplaybreaks

\theoremstyle{plain}
\newtheorem{theorem}{Theorem}[section]
\newtheorem{lemma}[theorem]{Lemma}

\theoremstyle{definition}

\newtheorem{example}[theorem]{Example}

\theoremstyle{remark}
\newtheorem{remark}[theorem]{Remark}

\crefname{hypothesis}{Hypothesis}{Hypotheses}
\Crefname{hypothesis}{Hypothesis}{Hypotheses}

\newcommand{\vertiii}[1]{{\left\vert\kern-0.25ex\left\vert\kern-0.25ex\left\vert #1 
        \right\vert\kern-0.25ex\right\vert\kern-0.25ex\right\vert}}

\newcommand{\email}[1]{\texttt{#1}}

\providecommand{\color}[1]{}
\newenvironment{keywords}{\par\medskip\noindent\textbf{Keywords.}\ }{\par\medskip}

\title{A superconvergent hybridizable discontinuous Galerkin method for the convective Cahn--Hilliard equation\thanks{G. Chen and D. Zuo are supported by the National Natural Science Foundation of China (NSFC) under Grant Nos. 121713413 and 12422115, and the Jiangsu Provincial Scientific Research Center of Applied Mathematics under Grant No. BK20233002. D. Han acknowledges support from National Science Foundation grant DMS-2310340. Y. Zhang is supported by National Science Foundation grant DMS-2111315.}}

\author{Gang Chen\thanks{Department of Mathematics, Sichuan University (\email{cglwdm@scu.cn}, \email{zuodujin@stu.scu.edu.cn}).}
\and Daozhi Han\thanks{Department of Mathematics, The State University of New York at Buffalo, Buffalo, NY (\email{daozhiha@buffalo.edu}).}
\and Jiaxuan Liu\thanks{School of Mathematics and Statistics, Central South University, Changsha, Hunan (\email{jiaxuanliu@csu.edu.cn}).}
\and Yangwen Zhang\thanks{Department of Mathematics, University of Louisiana at Lafayette, Lafayette, LA (\email{yangwen.zhang@louisiana.edu}).}
\and Dujin Zuo\footnotemark[2]}

\date{}

\begin{document}
\maketitle
\begin{abstract}  
		We propose a hybridizable discontinuous Galerkin (HDG) method combined with convex-concave splitting for the temporal discretization of the convective Cahn-Hilliard equation. The convection term is discretized explicitly without stabilization,  yielding three key advantages: (1) unconditional stability,  (2) preservation of the optimal convergence rate for piecewise constant approximations,  and (3) a symmetric system after local elimination,  enabling efficient solver via minimal residual methods. We establish optimal convergence rates in the $L^2$ norm for both the scalar and flux variables for any polynomial degree $k \geq 0$. To achieve optimal $L^2$-norm estimates,  we introduce a specialized HDG elliptic projection operator and analyze its approximation properties. Within the HDG framework,  local elimination is employed to reduce the degrees of freedom associated with the globally coupled unknowns,  and the scalar variables exhibit superconvergence. Finally,  numerical experiments validate the theoretical convergence rates and demonstrate the effectiveness of the proposed method.
	\end{abstract}	
	
	\begin{keywords}
		Cahn-Hilliard; HDG method; superconvergence; finite element.
	\end{keywords}

	\section{Introduction}
	Let $\Omega\subset \mathbb{R}^d$ ($d=2, 3$) be a {convex} polygonal domain with a Lipschitz boundary $\partial\Omega$,   and let $T$ be a positive constant.
	We consider the following Cahn-Hilliard equation:
	\begin{subequations}\label{ori}
		\begin{align}
			u_t-\frac{1}{Pe}\Delta\phi + \nabla \cdot (\bm{\beta} u)= 0, \;\; -\epsilon^2\Delta u+f(u)= \phi  \;\;\text{in} \; \Omega\times(0, T], \label{o2}\\
			\nabla u\cdot\bm n=\nabla\phi\cdot\bm n= 0 \;\;\text{on} \; \partial\Omega\times(0, T], \quad 
			u(\cdot, 0)= u^0(\cdot)  \;\; \text{in}\; \Omega, \label{o4}
		\end{align}
	\end{subequations}
	where $f(u)=u^3-u$,  $Pe$ is the P\'{e}clet number,  and the known velocity field $\bm{\beta} \in C(\overline{\Omega})$ is solenoidal (divergence-free) and satisfies no-penetration boundary condition $\bm{\beta} \cdot \bm {n}=0$ on $\partial \Omega$.  Here,  $u$ represents  the order parameter,  $\phi$ the chemical potential,  and $\epsilon$   the Cahn number,   a dimensionless measure of interface thickness.

	Originally introduced as a phenomenological model for phase separation in binary alloys,  the Cahn-Hilliard equation has since been widely used in studying interface dynamics,  phase transitions,  multiphase fluids,  and polymer solutions. Due to its significance in material science and multiphase flow,  extensive research has been devoted to establishing and analyzing stable and convergent numerical schemes for solving this equation,  including finite difference methods \cite{ChengKelong2019Aesf, HongJialin2024Dcoa, BomaWilcox2021ANIo, HamSeokjun2022AEAF, FukaoTakeshi2017SFDS, HuangXin2023PSfd},  finite volume methods \cite{NabetFlore2021Aeef, NabetFlore2016Coaf, Cueto-FelguerosoLuis2008Atfv, DargavilleS.2015Alsb},  Fourier spectral methods \cite{LiDong2017OSOS, ZhengNan2020Esac, MR3225505, KrischokA.2024Fius, ZengShilin2023Fdda},  mixed finite element methods,  nonconforming finite element methods \cite{DiegelAmandaE.2016Saco, BrennerSusanneC.2018ARSf, BrennerSusanneC.2020Arsf, WangWansheng2016PMFE, FengXiaobing2020AFDM, ElliottCharlesM.1989ANFM, ZhangShuo2010Anfe},  and discontinuous Galerkin (DG) methods \cite{MR3407258, MR2551193, MR2299767}.  These works introduce hybridization techniques for interior penalty discontinuous Galerkin (IPDG) methods and provide an insightful and technically valuable perspective on the subject \cite{MR4803805, MR4360352}.
	
	More recently,  researchers have explored more complex interface dynamics and multiphase flow problems by incorporating fluid flow into the Cahn-Hilliard system,  leading to models of the Cahn-Hilliard-Navier-Stokes type (cf. \cite{BadalassiV.E.2003Coms, MR2077449, JacqminDavid1999CoTN, KimJunseok2004Cmmf, KimJunseok2005Admf, CaiWentao2023ObEE, Acosta-SobaDaniel2025Pnao, LiuChen2023CoaD, FuGuosheng2020AdHs}). These models introduce a convection term into the Cahn-Hilliard equation. Various numerical methods have been proposed to solve the convective Cahn-Hilliard equation,  including conservative multigrid methods \cite{KimJunseok2004Cmmf},  isogeometric analysis \cite{LiuJu2013Iaot},  finite volume and DG methods \cite{MR3771451},  operator-splitting methods \cite{GideyH.H.2019Omft},  Fourier spectral methods \cite{MR3781562},  hybrid high-order methods \cite{MR3662245},  and others \cite{GruenG.2013OCSF, AppaduA.R.2017Aomf, BadalassiV.E.2003Coms, FengYu2020PSit, MR4544721}.  In convection-dominated cases,  where P\'{e}clet is large,  continuous Galerkin (CG) methods often lead to poor approximations unless numerical stabilization techniques such as streamline-diffusion or least-squares stabilization are applied. A natural alternative is the DG method.  In \cite{KAYDAVID2009DGFE} Kay et al.  proposed a DG scheme for the Cahn-Hilliard equation with convection,  establishing optimal error convergence in $L^{\infty}\left(H^1\right)$ for the order parameter and $L^2\left(H^1\right)$ for the chemical potential.  It should be emphasized that despite the wide availability of optimal $H^1$ error estimates for the Cahn-Hilliard-Navier-Stokes equations,  cf. \cite{CaiYongyong2018EEfa},  optimal error estimates in the $L^2$ norm are only recently  obtained  in \cite{CaiWentao2023ObEE} for CG and in \cite{LiuChen2023CoaD} for DG.
	
	However,  classical DG methods require a significantly larger number of degrees of freedom compared to CG methods. To address this,  hybridizable discontinuous Galerkin (HDG) methods were introduced by Cockburn,  Gopalakrishnan,  and Lazarov \cite{MR2485455}. HDG methods,  based on a mixed formulation,  introduce numerical flux and trace variables,  reducing the globally coupled degrees of freedom. These methods retain the advantages of DG methods while improving computational efficiency,  making them well-suited for convection-diffusion problems
	\cite{MR3463449, MR2991828, MR3120583, MR3342199, MR3440284}. HDG methods also inherit desirable properties from classical mixed methods,  such as optimal convergence rates and superconvergent postprocessing capabilities \cite{MR3044180, MR3813574, MR2772094, MR3831243, MR3833696, MR2911790, MR3000935}.  In our earlier work \cite{CHSZ2023} a super-convergent HDG scheme is designed for solving the Cahn-Hilliard equation with
	the Lehrenfeld-Sch\"{o}berl type stabilization using polynomials of order $k+1$ for the scalar variables,  and polynomials of order $k$ for the other unknowns. The scheme has an equivalent  formulation of a hybrid high order method \cite{MR3662245}.  By leveraging a HDG spectral estimate and constraints on space-time grid sizes,  optimal convergence rates in the $L^2$ norm are obtained for all variables and for polynomials of order $k\geq0$ with error constants depending polynomially on inverse of $\epsilon$.  
	
	In this article,  we extend the super-convergent HDG framework in \cite{CHSZ2023} to the convective Cahn-Hilliard equation. While the emphasis of \cite{CHSZ2023} is on the derivation of error constants depending on $1/\epsilon$ in polynomial order,  this article focuses on the HDG discretization of the convective term in order to obtain optimal convergence rates for all variables. There are three major  difficulties in the design and analysis of the algorithm.  First,  we note that in transport-coupled phase-field computations,  upwind/monotone mechanisms and bound-preserving treatments of the convection are often employed to suppress nonphysical oscillations and to facilitate maximum-principle enforcement,  especially in variable/degenerate mobility and CHNS-type settings; see,  e.g., \cite{MR1742748, MR4796135, MR4754742}. While upwind numerical flux is widely used in HDG-type methods for PDEs with convection,  it can lead to a loss of optimal convergence rates for piecewise constant approximations,  and piecewise constant approximations are particularly advantageous for nonlinear problems with sharp transition layers. 
		See \cite{MR3440284} for convection--diffusion PDEs,  \cite{MR3833696} for an optimal control problem governed by convection--diffusion PDEs,  and \cite{MR3556409} for the Navier--Stokes equations. The same issue arises in the convective Cahn--Hilliard equation when applying upwind numerical flux to the convection term.  Instead,  we discretize the convection term explicitly without the usage of upwind numerical flux. This approach preserves the optimal convergence rate for piecewise constant approximations; see \Cref{error_analysis} for a theoretical investigation and \Cref{table2} for numerical evidence. 
		Second,  the analysis of the convective HDG method necessitates the introduction of new elliptic HDG projections,  since those designed in \cite{CHSZ2023} would lead to a loss of optimal error estimates in the $L^2$ norm.   When there is no convection,  the HDG elliptic approximation $ u_{Ih} $ and $ \phi_{I h} $  can be obtained from solving two decoupled Laplacian equations; while the new  HDG elliptic approximation   needs to  be obtained from a coupled system,  including a convection-diffusion equation and one Laplacian type. Based on the new HDG elliptic projection,   we establish optimal $L^2$ error convergence rates for both scalar and flux variables for polynomials of order $k \geq 0$,  without constraints on spatio-temporal grid sizes.
		Finally,  the convection in the Cahn-Hilliard equation complicates the analysis of stability and the design of efficient solvers. Indeed,  Kay et al. \cite{KAYDAVID2009DGFE} introduced a DG scheme with implicit discretization of the convection term,  which is only  conditionally stable requiring a time step constraint $\Delta t \le C_* \epsilon^2$ \cite[Lemma 3.1]{KAYDAVID2009DGFE}. In contrast,  the explicit treatment of the convection in our HDG method
		guarantees unconditional (finite-time) energy stability,  existence and uniqueness of solution.
		Moreover,  due to the explicit discretization of the convection term,  the global system remains symmetric after element-by-element local elimination. This symmetry allows for the use of efficient iterative solvers,  such as the minimal residual method (MINRES),  to solve the global system.  Since numerical traces serve as the globally coupled degrees of freedom,  superconvergence of order $ k+2 $ is achieved for scalar unknowns. The methods and analytical tools developed in this work lay the foundation for constructing superconvergent HDG methods for phase-field fluid models,  including the Cahn-Hilliard-Navier-Stokes equations.

	Throughout this paper,  we use $ C $ to denote a positive constant independent of mesh sizes $ h $ and $ \Delta t $,  but dependent on $ Pe $,   $ \epsilon $ and $T$.
	The rest of this paper is structured as follows. In \Cref{FDCH},  we present the HDG formulation for the convective Cahn-Hilliard equation. Preliminary tools essential for numerical analysis are introduced in \Cref{HDG-Sobo},  and a special HDG elliptic projection is devised in  \Cref{section 2.1}. The projection error approximation in $L^2$ norm and optimal convergence results are established,  with detailed proofs provided in \Cref{Appendix}. Stability estimates for the nonlinear HDG methods are analyzed in \Cref{E_U_S},  followed by the derivation of optimal convergence rates in \Cref{error_analysis}. Finally,  numerical experiments validating our theoretical results are presented in \Cref{numerics}.

	\section{The HDG scheme}\label{FDCH}
	
	Let $\mathcal{T}_h$ be a shape-regular,  quasi-uniform triangulation of $\Omega$,  though other regular polygonal meshes are also applicable. Denote by $\mathcal{E}_h$ the set of all faces $E$ of the simplices $K$ in $\mathcal{T}_h$,  and let $\mathcal{E}_h^o$ and $\mathcal{E}_h^\partial$ represent the sets of interior and boundary faces,  respectively.
	
	We define the discrete inner products
	\begin{align*}
		(w, v)_{\mathcal{T}_h} := \sum_{K\in\mathcal{T}_h} (w, v)_K  = \sum_{K\in\mathcal{T}_h}\int_K w v\ {\rm d}\bm x,  \quad 	
		\left\langle \zeta, \rho\right\rangle_{\partial\mathcal{T}_h} := \sum_{K\in\mathcal{T}_h} \left\langle \zeta, \rho\right\rangle_{\partial K} = \sum_{K\in\mathcal{T}_h}\int_{\partial K} \zeta\rho\ {\rm d}\bm x, 
	\end{align*}
	 and introduce the corresponding discrete norm:
		$	\|w\|_{\mathcal{T}_h}^2=(w, w)_{\mathcal{T}_h},  \ \|\zeta\|_{\partial\mathcal{T}_h}^2=\langle\zeta, \zeta\rangle_{\partial \mathcal{T}_h}.$
	
	For any integer $k \geq 0$,  let $\mathcal{P}^k(K)$ denote the space of polynomials of degree at most $k$ on element $K$. We introduce the following discontinuous finite element spaces:
	\begin{align*}
		\bm V_h&:=\{\bm v_h\in [L^2(\Omega)]^d:\bm v_h|_K\in [\mathcal{P}^{k}(K)]^{d}, \forall K\in\mathcal{T}_h\}, \\
		W_h&:=\{w_h\in L^2(\Omega):w_h|_K\in \mathcal{P}^{k+1}(K), \forall K\in\mathcal{T}_h\}, \\
		\mathring{W}_h&:=\{w_h\in L^2_0(\Omega):w_h|_K\in \mathcal{P}^{k+1}(K), \forall K\in\mathcal{T}_h\}, \\
		M_h&:=\{\mu_h\in L^2(\mathcal{E}_h):\mu_h|_E\in \mathcal{P}^{k}(E), \forall E\in\mathcal{E}_h\}, 
	\end{align*}
	where $L^2_0(\Omega)$ denotes the subspace of $L^2(\Omega)$ consisting of mean-zero functions.
	
	Introducing auxiliary variables $\bm{p}=-\nabla \phi, \quad \bm{q}=-\nabla u, $
	the original PDE \eqref{ori} can be reformulated as the first-order system:
	\begin{eqnarray}\label{mixed}
		\bm p+ \nabla \phi = \bm 0,  \; u_t+\nabla\cdot (\frac{1}{Pe}\bm p+\bm {\beta}u) = 0, \;
		\bm q+ \nabla u = {\bm 0},    \;	\epsilon^2 \nabla\cdot\bm q+f(u)= \phi .
	\end{eqnarray}
	We now present the fully discrete HDG formulation based on the first-order convex-splitting approach. 	
	For a fixed integer $N$,  let $0=t_0<t_1<\cdots<t_N=T$ be a uniform partition of $[0,  T]$ with time step size $\Delta t = T/N$. The HDG method seeks $(\bm p_h^n, \phi_h^n, \widehat{\phi}_h^n)$ and $(\bm q_h^n,  u_h^n, \widehat u_h^n)$ in $\bm V_h\times W_h\times M_h$,  for all $(\bm r_1, w_1, \mu_1),  (\bm r_2, w_2, \mu_2) \in \bm V_h\times W_h\times M_h$ and $w_3\in W_h$,   satisfying:
	\begin{subequations}\label{HDG formula}
		\begin{align}
			&(\partial_{t}^+ u_h^n, w_1)_{\mathcal{T}_h}+\frac{1}{Pe}\mathcal A(\bm p_h^n,  \phi_h^n,   \widehat{\phi}_h^n;\bm r_1, w_1, \mu_1)+\mathcal{B}(u_h^{n-1},  \widehat{u}_h^{n-1}; w_1)=0, \label{hdg01}\\
			&(F(u^n_h,  u_h^{n-1}), w_2)_{\mathcal{T}_h}+\epsilon^2\mathcal{A}(\bm q_h^n,  u_h^n,  \widehat{u}_h^n;\bm r_2,  w_2, \mu_2)-(\phi_h^n, w_2)_{\mathcal{T}_h}=0\label{hdg02}, \\
			&(u_h^0, w_3)_{\mathcal{T}_h}-(u^0, w_3)_{\mathcal{T}_h}=0 \label{hdg03}, 
		\end{align}
		where   $\partial_{t}^+u_h^n=(u_h^n-u_h^{n-1})/\Delta t$,  
			$F(u^n_h,  u_h^{n-1})=(u_h^n)^3-u_h^{n-1}$
		. The bilinear forms are defined as follows.	
		The operator $\mathcal A: [\bm V_h\times W_h\times M_h]^2 \to \mathbb R$ is given by:
		\begin{align}\label{def_A}
			&\mathcal A(\bm q_h, u_h, \widehat u_h;\bm r_h, w_h, \mu_h)
			=(\bm q_h, \bm r_h)_{\mathcal{T}_h} - (u_h, \nabla\cdot\bm r_h)_{\mathcal{T}_h}
			+\langle \widehat u_h, \bm r_h\cdot\bm n \rangle_{\partial\mathcal{T}_h}  \\
			&\qquad+(\nabla\cdot\bm q_h,  w_h)_{\mathcal{T}_h} -\langle \bm q_h\cdot\bm n,  \mu_h \rangle_{\partial\mathcal{T}_h} +\alpha\langle  h_{\tau}^{-1}({\Pi}_k^{\partial}u_h-\widehat u_h),  {\Pi}_k^{\partial}w_h-\mu_h \rangle_{\partial\mathcal{T}_h}. \nonumber 
		\end{align}
		Here,  $\alpha$ is a positive constant,  which will be specified in the numerical experiments in   \Cref{numerics},  and  $\Pi_k^\partial $ is the element-wise $L^2$ projection onto $\mathcal{P}^k (E)$ such that 
		\begin{align*}
			\langle\Pi_k^\partial u_h,  \mu_h\rangle_{E}=\langle u_h,  \mu_h\rangle_{E},  \quad \forall \mu_h\in \mathcal{P}^k (E) \ \textup{and} \ E\subset \partial K, \ \forall K\in \mathcal{T}_h.
		\end{align*}
		Additionally,  for any $E \in \mathcal{E}_h$,  we have $\left.h_\tau\right|_E=h_E$.
		The operator $\mathcal B: {[ W_h\times M_h]\times W_h} \to \mathbb R$ is given by: 
		\begin{align}\label{def_B}
			&\mathcal B(u_h, \widehat u_h; w_h)
			= -(\bm \beta u_h, \nabla w_h)_{\mathcal{T}_h}
			+\langle  \bm{\beta}\cdot \bm{n}\widehat{u}_h, w_h\rangle_{\partial \mathcal{T}_h}.
		\end{align}
	\end{subequations}	
	
	\begin{remark}\label{firstremarkforB}
		The upwind numerical flux $\langle \tau_c (u_h-\widehat u_h),  w_h- \mu_h\rangle_{\partial\mathcal{T}_h}$ is widely used for PDEs with  convection terms, 
		where $ \tau_c $ is a stabilization piecewise constant with $ \tau_c\mid_E>0 $. In this formulation,   the operator $\mathcal{B}$ would instead take the form:
		\begin{align}\label{def_B_1}
			\mathcal{B}(u_h, \widehat u_h; w_h, \mu_h)
			= -(\bm \beta u_h, \nabla w_h)_{\mathcal{T}_h}
			+\langle  \bm{\beta}\cdot \bm{n}\widehat{u}_h, w_h\rangle_{\partial \mathcal{T}_h}+\langle \tau_c (u_h-\widehat u_h),  w_h- \mu_h\rangle_{\partial\mathcal{T}_h}.
		\end{align}
		In this case,  the optimal error convergence order no longer holds for $k=0$,  although it remains valid for $k\ge 1$. For theoretical analysis,  see \Cref{projection--proof--remark},  and for numerical results,  refer to \Cref{table2} in \Cref{exmaple1}.
		The loss of the optimal convergence rate for $k=0$ is not unique to the convective Cahn-Hilliard equation but also occurs in HDG-type methods for other PDEs. Similar behavior has been observed in \cite{MR3440284} for convection-diffusion PDEs,  \cite{MR3833696} for an optimal control problem governed by convection-diffusion PDEs,  and \cite{MR3556409} for the Navier-Stokes equations.
	\end{remark}

	\subsection{Preliminaries} \label{HDG-Sobo} 
	We recall the standard $L^2$ projection operators $\bm\Pi_{k}^o: [L^2(\Omega)]^d \to \bm V_h$ and $\Pi_{k+1}^o: L^2(\Omega) \to W_h$. The following approximation results are well established; see,  for instance,  {\cite[Lemma 3.3]{ChenGang2019AHMf},  \cite[Lemma 4.5.3]{BrennerSusanne2007TMTo}}:
	\begin{subequations}\label{classical_ine}
		\begin{align}
			&\|{\bm q -\bm\Pi_k^o \bm q}\|_{L^2(K)} \le  C h_K^{k+1} |{\bm q}|_{H^{k+1}(K)},   \quad 
			\| {u -{\Pi_{k+1}^o u}}\|_{L^2(K)} \le  C h_K^{k+2} |{u}|_{H^{k+2}(K)}, \label{classical_ine_1}\\
			&	\| {u - {\Pi_{k+1}^o u}}\|_{L^2(\partial K)} \le  C h_K^{k+ 3/ 2} |{u}|_{H^{k+2}(K)},   \label{classical_ine_2}  \\
			&	  \| w_h\|_{L^2 ( \partial K)} \le  C h_K^{-1/2} \| w_h\|_{L^2(K)},  \quad\forall w_h\in W_h,  \label{classical_ine_5}  \\
			&\| {u - {\Pi_{k+1}^o u}}\|_{L^q ( K)} 	\le  C h_K^{k+2+d(1/q-1/2)} | u |_{H^{k+2}(K)},   \quad q\in [1,  +\infty],  \label{classical_ine_4}\\
			&	\| w_h \|_{L^q ( K)} \le  C h_K^{d(1/q -1/2)} \| w_h \|_{L^2 ( K)},   \quad \forall w_h\in W_h,  \quad q\in [1,  +\infty]. \label{classical_ine_6}
		\end{align}
	\end{subequations}
	
	The following HDG Sobolev inequalities follow directly from \cite[Theorem 6.5]{DiPietroDanieleAntonio2020THHM}:
	\begin{lemma}[HDG Sobolev inequality] \label{discrete-soblev-hdg} 
		Suppose $q\in [1,  \infty)$ for $d=2$,  and $q\in [1,  6]$ for $d=3$. For any $\mu_h \in M_h$,  the following estimates hold:
		\begin{subequations}\label{sobolev}
			\begin{align}
				&\|{w}_{h}\|_{L^q(\Omega)}\le C \left( \|w_h\|_{\mathcal{T}_h}+\|\nabla w_h\|_{\mathcal{T}_h}
				+\alpha^{1/2}\|h_{\tau}^{-1/2}(\Pi_k^{\partial}w_h-\mu_h)\|_{\partial\mathcal{T}_h}\right),  \forall w_h\in W_h,  
				\label{sobolev-01} \\
				&\|w_{h}\|_{L^q(\Omega)}\le C \left(\|\nabla w_h\|_{\mathcal{T}_h}+\alpha^{1/2}\|h_{\tau}^{-1/2}(\Pi_k^{\partial}w_h-\mu_h)\|_{\partial\mathcal{T}_h}\right),  \forall w_h\in \mathring{W}_h. \label{sobolev-02}
			\end{align}
		\end{subequations}
	\end{lemma}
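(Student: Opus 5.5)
The plan is to reduce both inequalities to the discrete Sobolev embedding for broken polynomial spaces in \cite[Theorem 6.5]{DiPietroDanieleAntonio2020THHM}. That result is stated for hybrid pairs $(w_h,\mu_h)$ and controls the $L^q$ norm by a discrete $H^1$ seminorm of the form
\begin{align*}
|(w_h,\mu_h)|_{1,h}^2=\|\nabla w_h\|_{\mathcal{T}_h}^2+\|h_\tau^{-1/2}(w_h-\mu_h)\|_{\partial\mathcal{T}_h}^2 ,
\end{align*}
plus, for \eqref{sobolev-01}, the $L^2$ norm. For \eqref{sobolev-02}, the zero-mean condition replaces the $L^2$ norm via a discrete Poincar\'e inequality, which is contained in the same reference.

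The only discrepancy with the statement is that our jump term uses the Lehrenfeld--Sch\"oberl form $\Pi_k^\partial w_h-\mu_h$ rather than $w_h-\mu_h$. Note that $w_h\in\mathcal P^{k+1}(K)$ while $\mu_h$ has degree $k$. So the first step is to show
\begin{align*}
\|h_\tau^{-1/2}(w_h-\mu_h)\|_{\partial\mathcal{T}_h}\le \|h_\tau^{-1/2}(\Pi_k^\partial w_h-\mu_h)\|_{\partial\mathcal{T}_h}+\|h_\tau^{-1/2}(w_h-\Pi_k^\partial w_h)\|_{\partial\mathcal{T}_h}.
\end{align*}

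The second term on the right is bounded elementwise. Let $\bar w_K$ be the mean of $w_h$ on $K$. Since $\Pi_k^\partial$ reproduces constants on each face and is an $L^2(E)$-contraction, we have
\begin{align*}
\|w_h-\Pi_k^\partial w_h\|_{\partial K}\le \|w_h-\bar w_K\|_{\partial K}.
\end{align*}
A discrete trace inequality then gives $\|w_h-\bar w_K\|_{\partial K}\le Ch_K^{-1/2}\|w_h-\bar w_K\|_K$, and the Poincar\'e inequality on $K$ gives $\|w_h-\bar w_K\|_K\le Ch_K\|\nabla w_h\|_K$. Quasi-uniformity allows us to replace $h_E$ by $h_K$. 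Combining these, the second term is at most $C\|\nabla w_h\|_{\mathcal{T}_h}$.

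Hence $|(w_h,\mu_h)|_{1,h}$ is bounded by $C$ times the right-hand sides of \eqref{sobolev}, with the factor $\alpha^{1/2}$ absorbed into $C$ since $\alpha$ is a fixed positive constant. The proof then finishes as follows:
\begin{itemize}
\item \eqref{sobolev-01} follows directly from the cited embedding, with $q$ in the stated range.
\item For \eqref{sobolev-02}, we invoke the mean-zero version of the embedding, namely the discrete Poincar\'e--Sobolev inequality for $w_h\in\mathring W_h$. Applying the same replacement of the jump term gives the result.
\end{itemize}

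The only delicate point is checking that the cited theorem indeed allows face unknowns of degree $k$ paired with cell unknowns of degree $k+1$ and a mesh-dependent jump weight. If it does not, one can route the argument through a conforming averaging (Oswald-type) reconstruction of $w_h$. Its distance from $w_h$ is controlled by the jumps $w_h|_{K}-\mu_h$, which we bounded above, and the Sobolev embedding then holds for the continuous function. I expect this bookkeeping, not any new idea, to be the main work.
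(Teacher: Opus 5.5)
Your proposal is correct and takes the same route as the paper, which proves the lemma only by invoking \cite[Theorem 6.5]{DiPietroDanieleAntonio2020THHM}. Your elementwise bound $\|h_\tau^{-1/2}(w_h-\Pi_k^\partial w_h)\|_{\partial K}\le C\|\nabla w_h\|_K$, which converts the Lehrenfeld--Sch\"oberl jump into the standard HHO jump $w_h-\mu_h$, is exactly the bookkeeping the paper leaves implicit; if the cited result only covers the mean-zero case, \eqref{sobolev-01} follows from \eqref{sobolev-02} applied to the pair $(w_h-\bar w,\ \mu_h-\bar w)$, where $\bar w$ is the global mean, since the gradient and $\Pi_k^\partial w_h-\mu_h$ are unchanged and $\|\bar w\|_{L^q(\Omega)}\le C\|w_h\|_{\mathcal{T}_h}$.
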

	Next,  we present some basic properties of the operator $ \mathcal{A} $.
	\begin{lemma}{\cite[Lemmas~3.3--3.6]{ChenGang2018Otso}}\label{A-property}
		For all $(\boldsymbol{q}_h,  u_h,  \widehat{u}_h), (\boldsymbol{p}_h,  \phi_h,  \widehat{\phi}_h) \in \boldsymbol{V}_h \times W_h \times M_h$,  the following identities and bounds hold:
		\begin{align}\label{nonsymmetric-A}
			&\mathcal{A}(\boldsymbol{q}_h,  u_h,  \widehat{u}_h ; \boldsymbol{p}_h, -\phi_h, -\widehat{\phi}_h)=\mathcal{A}(\boldsymbol{p}_h,  \phi_h,  \widehat{\phi}_h ; \boldsymbol{q}_h, -u_h, -\widehat{u}_h ), \\
			\label{bounded-A}
			&\mathcal{A}(\boldsymbol{q}_h,  u_h,  \widehat{u}_h ; \boldsymbol{q}_h,  u_h,  \widehat{u}_h)=\|\boldsymbol{q}_h\|_{\mathcal{T}_h}^2+\alpha\|h_{\tau}^{-1 / 2}(\Pi_k^{\partial} u_h-\widehat{u}_h)\|_{\partial \mathcal{T}_h}^2, \\
			&|\mathcal{A}(\bm{q}_h, u_h, \widehat{u}_h;\bm{p}_h, \phi_h, \widehat{\phi}_h)|\le C (\alpha^{1/2}\|h_{\tau}^{-1/2}(\Pi_k^{\partial}u_h-\widehat{u}_h)\|_{\partial\mathcal{T}_h}+\|\bm{q}_h\|_{\mathcal{T}_h}\nonumber\\
			&+\|\nabla u_h\|_{\mathcal{T}_h})\times (\|\bm{p}_h\|_{\mathcal{T}_h}+\|\nabla \phi_h\|_{\mathcal{T}_h}+\alpha^{1/2}\|h_{\tau}^{-1/2}(\Pi_k^{\partial}\phi_h-\widehat{\phi}_h)\|_{\partial\mathcal{T}_h}).\label{A-continuous}
		\end{align}
		In addition,  if $\mathcal{A}(\boldsymbol{q}_h,  u_h,  \widehat{u}_h ; \boldsymbol{r}_h,  0, 0)=0$ for all $\boldsymbol{r}_h \in \boldsymbol{V}_h$,  then the following inequality holds:
		\begin{align}\label{es_u}
			\|\nabla u_h\|_{\mathcal{T}_h}+\|h_{\tau}^{-1 / 2}(u_h-\widehat{u}_h)\|_{\partial \mathcal{T}_h} \leq C(\|\boldsymbol{q}_h\|_{\mathcal{T}_h}+\alpha^{1/2}\|h_{\tau}^{-1 / 2}(\Pi_k^{\partial} u_h-\widehat{u}_h)\|_{\partial \mathcal{T}_h}).
		\end{align}
	\end{lemma}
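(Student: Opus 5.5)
The proof works directly from the definition \eqref{def_A} of $\mathcal A$: take the appropriate test functions and integrate by parts elementwise. Each item of \cref{A-property} reduces to an algebraic identity or a local inverse/trace estimate.

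\emph{Identity \eqref{bounded-A}.} Take $(\bm r_h,w_h,\mu_h)=(\bm q_h,u_h,\widehat u_h)$. The terms $-(u_h,\nabla\cdot\bm q_h)_{\mathcal T_h}+(\nabla\cdot\bm q_h,u_h)_{\mathcal T_h}$ cancel, and so do $\langle\widehat u_h,\bm q_h\cdot\bm n\rangle_{\partial\mathcal T_h}-\langle\bm q_h\cdot\bm n,\widehat u_h\rangle_{\partial\mathcal T_h}$. What remains is $\|\bm q_h\|^2_{\mathcal T_h}$ plus the stabilization term, which is the claim.

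\emph{Identity \eqref{nonsymmetric-A}.} Expand both sides term by term.
\begin{itemize}
\item The volume terms $(\bm q_h,\bm p_h)$ match directly.
\item The left side contains $-(u_h,\nabla\cdot\bm p_h)-(\nabla\cdot\bm q_h,\phi_h)$ and the right side $-(\phi_h,\nabla\cdot\bm q_h)-(\nabla\cdot\bm p_h,u_h)$, which are equal.
\item On the boundary, the left side gives $\langle\widehat u_h,\bm p_h\cdot\bm n\rangle+\langle\bm q_h\cdot\bm n,\widehat\phi_h\rangle$ and the right side gives $\langle\widehat\phi_h,\bm q_h\cdot\bm n\rangle+\langle\bm p_h\cdot\bm n,\widehat u_h\rangle$, again equal.
\item The stabilization terms are $-\alpha\langle h_\tau^{-1}(\Pi_k^\partial u_h-\widehat u_h),\Pi_k^\partial\phi_h-\widehat\phi_h\rangle$ on both sides.
\end{itemize}
This is pure bookkeeping.

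\emph{Bound \eqref{A-continuous}.} Rewrite $\mathcal A$ in primal form by integrating $-(u_h,\nabla\cdot\bm r_h)$ by parts:
\[
-(u_h,\nabla\cdot\bm r_h)+\langle\widehat u_h,\bm r_h\cdot\bm n\rangle=(\nabla u_h,\bm r_h)-\langle u_h-\widehat u_h,\bm r_h\cdot\bm n\rangle .
\]
Since $\bm r_h\cdot\bm n\in\mathcal P^k(E)$, we may replace $u_h$ by $\Pi_k^\partial u_h$ in the face term. Integrating $(\nabla\cdot\bm q_h,w_h)$ by parts in the same way gives $-(\bm q_h,\nabla w_h)+\langle\bm q_h\cdot\bm n,\Pi_k^\partial w_h-\mu_h\rangle$.

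Each face term is then bounded by Cauchy--Schwarz with weights $h_\tau^{\pm1/2}$, using the discrete trace inequality $\|h_\tau^{1/2}\bm r_h\cdot\bm n\|_{\partial\mathcal T_h}\le C\|\bm r_h\|_{\mathcal T_h}$ (the vector analogue of \eqref{classical_ine_5}). This yields the product bound with $\alpha$ absorbed into $C$.

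\emph{Estimate \eqref{es_u}.} From the primal form above, the hypothesis $\mathcal A(\bm q_h,u_h,\widehat u_h;\bm r_h,0,0)=0$ reads
\[
(\bm q_h+\nabla u_h,\bm r_h)_{\mathcal T_h}=\langle\Pi_k^\partial u_h-\widehat u_h,\bm r_h\cdot\bm n\rangle_{\partial\mathcal T_h}\quad\forall\,\bm r_h\in\bm V_h .
\]
Because $u_h\in\mathcal P^{k+1}$, we have $\nabla u_h\in\bm V_h$, so we may choose $\bm r_h=\nabla u_h$. The trace inequality then gives $\|\nabla u_h\|^2\le\|\bm q_h\|\|\nabla u_h\|+C\|h_\tau^{-1/2}(\Pi_k^\partial u_h-\widehat u_h)\|\|\nabla u_h\|$, which bounds $\|\nabla u_h\|_{\mathcal T_h}$.

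The remaining term $\|h_\tau^{-1/2}(u_h-\widehat u_h)\|$ is the main obstacle, since the stabilization controls only $\Pi_k^\partial u_h-\widehat u_h$. Split
\[
u_h-\widehat u_h=(u_h-\Pi_k^\partial u_h)+(\Pi_k^\partial u_h-\widehat u_h).
\]
For the first part, $u_h-\Pi_k^\partial u_h=(I-\Pi_k^\partial)(u_h-\bar u_K)$, where $\bar u_K$ is the elementwise mean and $(I-\Pi_k^\partial)$ annihilates constants. Hence
\[
\|h_\tau^{-1/2}(u_h-\Pi_k^\partial u_h)\|_{\partial K}\le Ch_K^{-1}\|u_h-\bar u_K\|_K\le C\|\nabla u_h\|_K
\]
by the trace inequality and Poincaré's inequality. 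Combining this with the bound on $\|\nabla u_h\|_{\mathcal T_h}$ completes the proof.
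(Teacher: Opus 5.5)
Your proof is correct. The two identities follow by direct expansion, the continuity bound follows from the integrated-by-parts (primal) form of $\mathcal A$ together with discrete trace inequalities, and \eqref{es_u} follows by testing with $\bm r_h=\nabla u_h\in\bm V_h$ (available because of the $\mathcal P^{k+1}/\mathcal P^k$ pairing) plus the local bound $\|h_\tau^{-1/2}(u_h-\Pi_k^\partial u_h)\|_{\partial K}\le C\|\nabla u_h\|_K$. The paper gives no proof of its own and imports the lemma from \cite{ChenGang2018Otso}; your argument is the standard self-contained verification of those cited results, with $C$ depending on $\alpha$ and shape regularity and with the flat-face property $\bm r_h\cdot\bm n|_E\in\mathcal P^k(E)$ used when you replace $u_h$ by $\Pi_k^\partial u_h$.
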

	Moreover,  the operator $\mathcal{A}$ satisfies the following discrete LBB condition.
		\begin{lemma}\cite[Lemma 3.3]{CHSZ2023}
			\label{A-LBB}
			For all $\left(\boldsymbol{q}_h,  u_h,  \widehat{u}_h\right) \in \boldsymbol{V}_h \times \stackrel{\circ}{W}_h \times M_h$,  we have
			\begin{align*}
				\sup _{0 \neq\left(\boldsymbol{p}_h,  \phi_h,  \widehat{\phi}_h\right) \in \boldsymbol{V}_h \times \mathring{W}_h \times M_h} \frac{\mathcal{A}\left(\boldsymbol{q}_h,  u_h,  \widehat{u}_h ; \boldsymbol{p}_h,  \phi_h,  \widehat{\phi}_h\right)}{\left\|\boldsymbol{p}_h\right\|_{\mathcal{T}_h}+\left\|\nabla \phi_h\right\|_{\mathcal{T}_h}+\left\|h_{\mathcal{T}}^{-1 / 2}\left(\Pi_k^{\partial} \phi_h-\widehat{\phi}_h\right)\right\|_{\partial \mathcal{T}_h}} \\
				\geq C\left(\left\|\boldsymbol{q}_h\right\|_{\mathcal{T}_h}+\left\|\nabla u_h\right\|_{\mathcal{T}_h}+\left\|h_{\mathcal{T}}^{-1 / 2}\left(\Pi_k^{\partial} u_h-\widehat{u}_h\right)\right\|_{\partial \mathcal{T}_h}\right).
			\end{align*}
		\end{lemma}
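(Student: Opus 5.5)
The plan is to construct a test triple $(\bm p_h,\phi_h,\widehat\phi_h)\in \bm V_h\times\mathring W_h\times M_h$ from the given $(\bm q_h,u_h,\widehat u_h)$, such that $\mathcal A(\bm q_h,u_h,\widehat u_h;\bm p_h,\phi_h,\widehat\phi_h)$ controls the squared right-hand side while the denominator stays bounded by the same quantity. Throughout, write
$$\vertiii{(\bm q_h,u_h,\widehat u_h)}:=\|\bm q_h\|_{\mathcal T_h}+\|\nabla u_h\|_{\mathcal T_h}+\|h_\tau^{-1/2}(\Pi_k^\partial u_h-\widehat u_h)\|_{\partial\mathcal T_h}.$$

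\textbf{Step 1 (diagonal test).} Take $(\bm p_h,\phi_h,\widehat\phi_h)=(\bm q_h,u_h,\widehat u_h)$. This is admissible because $u_h\in\mathring W_h$. By \eqref{bounded-A},
$$\mathcal A(\bm q_h,u_h,\widehat u_h;\bm q_h,u_h,\widehat u_h)=\|\bm q_h\|_{\mathcal T_h}^2+\alpha\|h_\tau^{-1/2}(\Pi_k^\partial u_h-\widehat u_h)\|_{\partial\mathcal T_h}^2 .$$
This controls every part of $\vertiii{\cdot}$ except $\|\nabla u_h\|_{\mathcal T_h}$.

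\textbf{Step 2 (recovering the gradient).} Test with $(\bm r_h,0,0)$, where $\bm r_h:=-\nabla u_h\in\bm V_h$. This is admissible since $\nabla u_h|_K\in[\mathcal P^k(K)]^d$. Integrating by parts elementwise,
$$\mathcal A(\bm q_h,u_h,\widehat u_h;\bm r_h,0,0)=(\bm q_h,\bm r_h)_{\mathcal T_h}+(\nabla u_h,\bm r_h)_{\mathcal T_h}-\langle u_h-\widehat u_h,\bm r_h\cdot\bm n\rangle_{\partial\mathcal T_h}.$$
Because $\bm r_h\cdot\bm n|_E\in\mathcal P^k(E)$, the last term equals $\langle \Pi_k^\partial u_h-\widehat u_h,\bm r_h\cdot\bm n\rangle_{\partial\mathcal T_h}$. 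With $\bm r_h=-\nabla u_h$, the trace inequality \eqref{classical_ine_5} and Young's inequality then give
$$\mathcal A(\bm q_h,u_h,\widehat u_h;-\nabla u_h,0,0)\ge \tfrac12\|\nabla u_h\|_{\mathcal T_h}^2-C\|\bm q_h\|_{\mathcal T_h}^2-C\|h_\tau^{-1/2}(\Pi_k^\partial u_h-\widehat u_h)\|_{\partial\mathcal T_h}^2 .$$
This is the same mechanism that underlies \eqref{es_u}.

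\textbf{Step 3 (combination and denominator bound).} Set $(\bm p_h,\phi_h,\widehat\phi_h)=(\bm q_h-\delta\nabla u_h,\ u_h,\ \widehat u_h)$ with $\delta>0$ small, depending only on $\alpha$ and the trace constant. Adding $\delta$ times Step 2 to Step 1 gives
$$\mathcal A(\bm q_h,u_h,\widehat u_h;\bm p_h,\phi_h,\widehat\phi_h)\ge C\,\vertiii{(\bm q_h,u_h,\widehat u_h)}^2 .$$
The denominator is bounded directly: $\|\bm p_h\|_{\mathcal T_h}+\|\nabla\phi_h\|_{\mathcal T_h}+\|h_\tau^{-1/2}(\Pi_k^\partial\phi_h-\widehat\phi_h)\|_{\partial\mathcal T_h}\le C\,\vertiii{(\bm q_h,u_h,\widehat u_h)}$. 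Dividing yields the inf-sup bound, and the trivial case $\vertiii{\cdot}=0$ needs no argument.

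The main obstacle is Step 2. One has to check that $\nabla u_h$ really lies in $\bm V_h$; this is why the degree mismatch ($\mathcal P^{k+1}$ scalars, $\mathcal P^k$ fluxes) matters. One also has to check that the face term can be rewritten with $\Pi_k^\partial$, so that it is controlled by the stabilization seminorm rather than by $u_h-\widehat u_h$ itself. Finally, $\delta$ must be chosen carefully so that the negative $\|\bm q_h\|^2$ and stabilization terms are absorbed by Step 1.
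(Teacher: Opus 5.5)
Your strategy is the standard one: test on the diagonal and add a small multiple of $\nabla u_h$ in the flux slot. The paper gives no proof of this lemma and only cites it from earlier work. As written, however, Step 2 has the sign of the test flux backwards, and this breaks Step 3. Your own identity gives
\[
\mathcal A(\bm q_h,u_h,\widehat u_h;\bm r_h,0,0)=(\bm q_h+\nabla u_h,\bm r_h)_{\mathcal T_h}-\langle \Pi_k^\partial u_h-\widehat u_h,\bm r_h\cdot\bm n\rangle_{\partial\mathcal T_h}.
\]
So $\bm r_h=-\nabla u_h$ produces $-\|\nabla u_h\|_{\mathcal T_h}^2$. The inequality you state is in fact an upper bound: $\mathcal A(\bm q_h,u_h,\widehat u_h;-\nabla u_h,0,0)\le-\tfrac12\|\nabla u_h\|_{\mathcal T_h}^2+C\|\bm q_h\|_{\mathcal T_h}^2+CS^2$, where $S:=\|h_\tau^{-1/2}(\Pi_k^\partial u_h-\widehat u_h)\|_{\partial\mathcal T_h}$.

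Hence the coercivity you claim in Step 3 for $(\bm q_h-\delta\nabla u_h,u_h,\widehat u_h)$ is false. Concretely, take $\bm q_h=\bm 0$ and $u_h=x_1-|\Omega|^{-1}\int_\Omega x_1\,{\rm d}\bm x\in\mathring W_h$. Take $\widehat u_h=\Pi_k^\partial u_h$, which is single-valued because $u_h$ is continuous, so $S=0$. Then $\mathcal A(\bm q_h,u_h,\widehat u_h;-\delta\nabla u_h,u_h,\widehat u_h)=-\delta\|\nabla u_h\|_{\mathcal T_h}^2<0$, whereas your claimed lower bound is $C\|\nabla u_h\|_{\mathcal T_h}^2>0$.

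The repair is to flip the sign: use $\bm r_h=+\nabla u_h$ in Step 2 and the test triple $(\bm q_h+\delta\nabla u_h,u_h,\widehat u_h)$ in Step 3. Let $C_{\rm tr}$ be the constant in the discrete trace bound $\|h_\tau^{1/2}\nabla u_h\cdot\bm n\|_{\partial\mathcal T_h}\le C_{\rm tr}\|\nabla u_h\|_{\mathcal T_h}$. Use $|(\bm q_h,\nabla u_h)_{\mathcal T_h}|\le\|\bm q_h\|_{\mathcal T_h}^2+\tfrac14\|\nabla u_h\|_{\mathcal T_h}^2$ and $|\langle\Pi_k^\partial u_h-\widehat u_h,\nabla u_h\cdot\bm n\rangle_{\partial\mathcal T_h}|\le C_{\rm tr}^2S^2+\tfrac14\|\nabla u_h\|_{\mathcal T_h}^2$. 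One then gets
\[
\mathcal A(\bm q_h,u_h,\widehat u_h;\bm q_h+\delta\nabla u_h,u_h,\widehat u_h)\ge(1-\delta)\|\bm q_h\|_{\mathcal T_h}^2+(\alpha-\delta C_{\rm tr}^2)S^2+\tfrac{\delta}{2}\|\nabla u_h\|_{\mathcal T_h}^2.
\]
Choosing $\delta=\min\{\tfrac12,\alpha/(2C_{\rm tr}^2)\}$ gives the required coercivity, with a constant depending on $\alpha$.

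The rest of your argument is sound:
\begin{itemize}
\item admissibility holds, since $u_h\in\mathring W_h$ and $\nabla u_h\in\bm V_h$ by the $\mathcal P^{k+1}$/$\mathcal P^{k}$ degree pairing;
\item $u_h$ may be replaced by $\Pi_k^\partial u_h$ in the face term, since $\bm r_h\cdot\bm n|_E\in\mathcal P^k(E)$;
\item the denominator is bounded by $(1+\delta)$ times the right-hand-side quantity.
\end{itemize}
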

	
	To simplify notation,  we introduce the following norm:
	\begin{align*}
		\interleave (\bm{p}_h, \phi_h, \widehat{\phi}_h)\interleave^2 :=\|\bm{p}_h\|_{\mathcal{T}_h}^2
		+\alpha\|h_{\tau}^{-1/2}(\Pi_k^{\partial}\phi_h-\widehat{\phi}_h)\|_{\partial\mathcal{T}_h}^2.
	\end{align*}
	Based on the definition \eqref{def_B},  we derive the following property of the operator $ \mathcal{B} $.
	\begin{lemma}
		For all $ (u_h, \widehat{u}_h), (w_h, \mu_h)\in W_h\times M_h $,   the following bound holds:
		\begin{align}\label{B-bound-ori}
				|\mathcal B(u_h, \widehat u_h; w_h)| \leq &C   ( \|u_h\|_{\mathcal{T}_h}
				+h\|h_{\tau}^{-1/2}(\Pi_k^{\partial}u_h-\widehat u_h)\|_{\partial\mathcal{T}_h}) (\|\nabla w_h\|_{\mathcal{T}_h}+\|h_{\tau}^{-1/2}(w_h-\mu_h)\|_{\partial\mathcal{T}_h}).
			\end{align}
			Specifically,  if $	\mathcal A(\bm p_h, w_h, \mu_h; \bm r_h, 0, 0)=0 $ for all $\bm r_h\in \bm V_h$,  then it holds
		
		\begin{align}
			\label{B-es}
			|\mathcal B(u_h, \widehat u_h; w_h)| \leq &C   ( \|u_h\|_{\mathcal{T}_h}
			+h\|h_{\tau}^{-1/2}(\Pi_k^{\partial}u_h-\widehat u_h)\|_{\partial\mathcal{T}_h}) \interleave(\bm p_h,  w_h,  \mu_h) \interleave.
		\end{align}  
	\end{lemma}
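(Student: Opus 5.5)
The plan is to rewrite $\mathcal B$ so that $\widehat u_h$ appears only through the jump $\Pi_k^\partial u_h-\widehat u_h$, and then bound each piece by Cauchy--Schwarz together with the trace inequality \eqref{classical_ine_5}. The first step uses the assumption $\bm\beta\cdot\bm n=0$ on $\partial\Omega$ and $\nabla\cdot\bm\beta=0$. Since $\widehat u_h$ is single-valued on interior faces and $\bm\beta$ is continuous, we have $\langle \bm\beta\cdot\bm n\,\widehat u_h,\mu_h\rangle_{\partial\mathcal T_h}=0$ for any single-valued $\mu_h\in M_h$.

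We therefore write
\begin{align*}
\mathcal B(u_h,\widehat u_h;w_h)=-(\bm\beta u_h,\nabla w_h)_{\mathcal T_h}+\langle \bm\beta\cdot\bm n\,u_h,w_h-\mu_h\rangle_{\partial\mathcal T_h}-\langle \bm\beta\cdot\bm n\,(u_h-\widehat u_h),w_h-\mu_h\rangle_{\partial\mathcal T_h},
\end{align*}
after subtracting the vanishing term $\langle\bm\beta\cdot\bm n\,\widehat u_h,\mu_h\rangle_{\partial\mathcal T_h}$. The term $(\bm\beta u_h,\nabla w_h)$ is bounded by $\|\bm\beta\|_{L^\infty}\|u_h\|_{\mathcal T_h}\|\nabla w_h\|_{\mathcal T_h}$. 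For the second term, \eqref{classical_ine_5} gives $\|u_h\|_{\partial K}\le Ch^{-1/2}\|u_h\|_K$, so it is bounded by $C\|u_h\|_{\mathcal T_h}\|h_\tau^{-1/2}(w_h-\mu_h)\|_{\partial\mathcal T_h}$.

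The third term is the delicate one. It must be bounded by $h\|h_\tau^{-1/2}(\Pi_k^\partial u_h-\widehat u_h)\|$ plus $\|u_h\|$, not by $\|h_\tau^{-1/2}(u_h-\widehat u_h)\|$. I would split $u_h-\widehat u_h=(u_h-\Pi_k^\partial u_h)+(\Pi_k^\partial u_h-\widehat u_h)$.

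For the second part, Cauchy--Schwarz gives $\|\Pi_k^\partial u_h-\widehat u_h\|_{\partial\mathcal T_h}\|w_h-\mu_h\|_{\partial\mathcal T_h}$. This equals $h\|h_\tau^{-1/2}(\Pi_k^\partial u_h-\widehat u_h)\|\cdot\|h_\tau^{-1/2}(w_h-\mu_h)\|$ up to constants, using quasi-uniformity ($h_\tau\simeq h$). For the first part, $\|u_h-\Pi_k^\partial u_h\|_{\partial K}\le 2\|u_h\|_{\partial K}\le Ch^{-1/2}\|u_h\|_K$ by \eqref{classical_ine_5}, which again yields $C\|u_h\|\,\|h_\tau^{-1/2}(w_h-\mu_h)\|$. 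In fact I expect I do not even need the splitting in the second identity: keeping $\langle\bm\beta\cdot\bm n\,\widehat u_h,w_h-\mu_h\rangle$ and writing $\widehat u_h=\Pi_k^\partial u_h-(\Pi_k^\partial u_h-\widehat u_h)$ works equally well. Summing these bounds gives \eqref{B-bound-ori}.

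For \eqref{B-es}, I would invoke \eqref{es_u} from \cref{A-property} with $(\bm p_h,w_h,\mu_h)$ in place of $(\bm q_h,u_h,\widehat u_h)$. This gives $\|\nabla w_h\|+\|h_\tau^{-1/2}(w_h-\mu_h)\|\le C(\|\bm p_h\|+\alpha^{1/2}\|h_\tau^{-1/2}(\Pi_k^\partial w_h-\mu_h)\|)\le C\interleave(\bm p_h,w_h,\mu_h)\interleave$. Substituting into \eqref{B-bound-ori} finishes the proof.

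The main obstacle is the careful handling of the boundary term so that only $\Pi_k^\partial u_h-\widehat u_h$, with the extra factor $h$, appears. This relies on the vanishing of $\langle\bm\beta\cdot\bm n\,\widehat u_h,\mu_h\rangle_{\partial\mathcal T_h}$, which uses the continuity of $\bm\beta$, single-valuedness of the traces, and the no-penetration condition on boundary faces.
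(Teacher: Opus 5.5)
Your proposal is correct and follows the paper's argument. The paper also uses $\langle \bm\beta\cdot\bm n\,\widehat u_h,\mu_h\rangle_{\partial\mathcal T_h}=0$ to replace $w_h$ by $w_h-\mu_h$ in the face term, then splits $\widehat u_h=\Pi_k^\partial u_h-(\Pi_k^\partial u_h-\widehat u_h)$ (the shortcut you mention at the end; your extra add-and-subtract of $u_h$ simply recombines into this), bounds $\|\Pi_k^\partial u_h\|_{\partial\mathcal T_h}$ by the discrete trace inequality, and invokes \eqref{es_u} to get \eqref{B-es}; note that $\nabla\cdot\bm\beta=0$ is not actually needed here.
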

	\begin{proof}
		By definition of operator $ \mathcal{B} $,  one has
		\begin{align*}
			\mathcal{B}(u_h, \widehat{u}_h;w_h)&=(-\bm{\beta}u_h, \nabla w_h)_{\mathcal{T}_h}
			+\langle \bm{\beta}\cdot \bm{n}\widehat{u}_h, w_h \rangle_{\partial \mathcal{T}_h}=:R_1+R_2.
		\end{align*}
		For $R_1$,  the Cauchy--Schwarz inequality yields
		\begin{align*}
			|R_1|&\le \|\bm{\beta}\|_{0, \infty}\|u_h\|_{\mathcal{T}_h}\|\nabla w_h\|_{\mathcal{T}_h}\le  C\|u_h\|_{\mathcal{T}_h}
			\|\nabla w_h\|_{\mathcal{T}_h}.
		\end{align*}
		To estimate $R_2$,  note that for any $\mu_h\in M_h$, 
			\[
			\langle \bm{\beta}\cdot\bm{n}\, \widehat{u}_h, \mu_h\rangle_{\partial\mathcal{T}_h}=0, 
			\]
			because $\widehat u_h$ and $\mu_h$ are single-valued on each interior face (so the contributions cancel when faces are
			counted twice with opposite normals),  and $\bm{\beta}\cdot\bm{n}=0$ on $\partial\Omega$.
			Hence, 
		\begin{align*}
			|R_2|&=
				|\langle \bm{\beta}\cdot \bm{n}\widehat{u}_h, w_h\rangle_{\partial \mathcal{T}_h}|
			=
			|\langle \bm{\beta}\cdot \bm{n}\widehat{u}_h, w_h-\mu_h\rangle_{\partial \mathcal{T}_h}|\le C\|\widehat{u}_h\|_{\partial\mathcal{T}_h}\|w_h-\mu_h\|_{\partial\mathcal{T}_h}\\
			&\le C(\|\Pi_k^{\partial}u_h-\widehat{u}_h\|_{\partial\mathcal{T}_h}+\|\Pi_k^{\partial}u_h\|_{\partial\mathcal{T}_h})\|w_h-\mu_h\|_{\partial\mathcal{T}_h}\\
			&\le Ch\|h_{\tau}^{-1/2}(\Pi_k^{\partial}u_h-\widehat{u}_h)\|_{\partial\mathcal{T}_h}\|h_{\tau}^{-1/2}(w_h-\mu_h)\|_{\partial\mathcal{T}_h}
			+C\|u_h\|_{\mathcal{T}_h}\|h_{\tau}^{-1/2}(w_h-\mu_h)\|_{\partial\mathcal{T}_h}.
		\end{align*}
		{Combining the bounds for $R_1$ and $R_2$ proves \eqref{B-bound-ori}.
			
			Finally,  if $\mathcal A(\bm p_h, w_h, \mu_h;\bm r_h, 0, 0)=0$ for all $\bm r_h\in \bm V_h$,  then by \eqref{es_u} we have
			\[
			\|\nabla w_h\|_{\mathcal{T}_h}+\|h_{\tau}^{-1/2}(w_h-\mu_h)\|_{\partial\mathcal{T}_h}
			\le C\, \interleave(\bm p_h, w_h, \mu_h)\interleave.
			\]
			Substituting this into \eqref{B-bound-ori} yields \eqref{B-es}.}
	\end{proof}

	For all $u_h\in\mathring W_h$,   the HDG Laplace inverse of $u_h$ is defined as the unique solution $(\bm{\Pi}_{\bm V}u_h, \Pi_Wu_h, \Pi_{M}u_h)\in \bm V_h\times\mathring{W}_h\times M_h$ satisfying
	\begin{align}\label{def_-1_h}
		\mathcal A(\bm{\Pi}_{\bm V}u_h, \Pi_W u_h, \Pi_{M}u_h;\bm r_h, w_h, \mu_h)=(u_h, w_h)_{\mathcal{T}_h}
	\end{align}
	for all $(\bm r_h, w_h, \mu_h)\in \bm V_h\times{W}_h\times M_h$. Additionally,  we introduce the broken discrete $H^{-1}$,  following  \cite{CHSZ2023},  defined as
	\begin{align*}
		\|u_h\|^2_{-1, h}:=
		\mathcal A(\bm{\Pi}_{\bm V}u_h, \Pi_Wu_h, \Pi_{M}u_h;\bm{\Pi}_{\bm V}u_h, \Pi_Wu_h, \Pi_{M}u_h)=(u_h, \Pi_Wu_h)_{\mathcal{T}_h}.
	\end{align*}

	\begin{lemma}[{\cite[Lemma 3.6,  inequalities (3.12) and (3.13)]{CHSZ2023}}]
		If $u_h\in \mathring{W}_h$ and $(w_h, \mu_h)\in  W_h\times M_h$,   then we have
		\begin{align}
			&(u_h, w_h)_{\mathcal{T}_h}\le C\|u_h\|_{-1, h}(\|\nabla w_h\|_{\mathcal{T}_h}
			+\alpha^{1/2}\|h_{\tau}^{-1/2}(\Pi_k^{\partial}w_h-\mu_h)\|_{\partial\mathcal{T}_h}
			), \label{Cauchy}\\
			&\|\nabla \Pi_W u_h \|_{\mathcal{T}_h}+\alpha^{1/2}\|h_{\tau}(\Pi_k^{\partial} \Pi_W u_h-\Pi_M u_h)\|_{\partial\mathcal{T}_h}  \le C\|u_h\|_{-1, h},   \label{lem-nega} \\
			&\|u_h\|_{-1, h} \leq C\| u_h \|_{\mathcal{T}_h}. \label{nega-imb}
		\end{align}

	\end{lemma}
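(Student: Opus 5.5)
The three inequalities of the statement can all be derived from the defining relation \eqref{def_-1_h}, the coercivity identity \eqref{bounded-A}, and the gradient control \eqref{es_u}. Write $(\bm{\Pi}_{\bm V}u_h,\Pi_Wu_h,\Pi_Mu_h)$ for the HDG Laplace inverse of $u_h\in\mathring W_h$. Testing \eqref{def_-1_h} with $(\bm r_h,0,0)$ gives $\mathcal A(\bm{\Pi}_{\bm V}u_h,\Pi_Wu_h,\Pi_Mu_h;\bm r_h,0,0)=0$ for all $\bm r_h\in\bm V_h$. Hence \eqref{es_u} applies and yields
\[
\|\nabla\Pi_Wu_h\|_{\mathcal{T}_h}+\|h_\tau^{-1/2}(\Pi_Wu_h-\Pi_Mu_h)\|_{\partial\mathcal{T}_h}\le C\interleave(\bm{\Pi}_{\bm V}u_h,\Pi_Wu_h,\Pi_Mu_h)\interleave .
\]
By \eqref{bounded-A}, the right-hand side squared equals $\|u_h\|_{-1,h}^2$. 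This proves \eqref{lem-nega}. The stabilization term $\|h_\tau^{-1/2}(\Pi_k^\partial\Pi_Wu_h-\Pi_Mu_h)\|$ is already part of the triple norm, and the weaker weight $h_\tau$ written in the statement follows a fortiori because $h_\tau\le C h_\tau^{-1/2}$ on a bounded domain.

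For \eqref{Cauchy}, I would test \eqref{def_-1_h} with $(\bm r_h,w_h,\mu_h)=(\bm 0,w_h,\mu_h)$. This gives
\[
(u_h,w_h)_{\mathcal{T}_h}=\mathcal A(\bm{\Pi}_{\bm V}u_h,\Pi_Wu_h,\Pi_Mu_h;\bm 0,w_h,\mu_h).
\]
Expanding $\mathcal A$, the right-hand side equals $(\nabla\cdot\bm{\Pi}_{\bm V}u_h,w_h)-\langle\bm{\Pi}_{\bm V}u_h\cdot\bm n,\mu_h\rangle+\alpha\langle h_\tau^{-1}(\Pi_k^\partial\Pi_Wu_h-\Pi_Mu_h),\Pi_k^\partial w_h-\mu_h\rangle$. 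Integrating the first term by parts elementwise produces $-(\bm{\Pi}_{\bm V}u_h,\nabla w_h)+\langle\bm{\Pi}_{\bm V}u_h\cdot\bm n,w_h-\mu_h\rangle$. Since $\bm{\Pi}_{\bm V}u_h\cdot\bm n\in\mathcal P^k(E)$, the face term can be written as $\langle\bm{\Pi}_{\bm V}u_h\cdot\bm n,\Pi_k^\partial w_h-\mu_h\rangle$. The discrete trace inequality \eqref{classical_ine_5} (its vector version on $\bm V_h$) gives $\|h_\tau^{1/2}\bm{\Pi}_{\bm V}u_h\cdot\bm n\|_{\partial\mathcal{T}_h}\le C\|\bm{\Pi}_{\bm V}u_h\|_{\mathcal{T}_h}$. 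Applying Cauchy--Schwarz to every term then bounds the whole expression by $C\interleave(\bm{\Pi}_{\bm V}u_h,\Pi_Wu_h,\Pi_Mu_h)\interleave\,(\|\nabla w_h\|_{\mathcal{T}_h}+\alpha^{1/2}\|h_\tau^{-1/2}(\Pi_k^\partial w_h-\mu_h)\|_{\partial\mathcal{T}_h})$, which is \eqref{Cauchy}. This step is essentially the continuity bound \eqref{A-continuous}, and one may cite that bound directly instead.

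For \eqref{nega-imb}, I would start from $\|u_h\|_{-1,h}^2=(u_h,\Pi_Wu_h)_{\mathcal{T}_h}\le\|u_h\|_{\mathcal{T}_h}\|\Pi_Wu_h\|_{\mathcal{T}_h}$. Because $\Pi_Wu_h\in\mathring W_h$, the mean-zero HDG Sobolev inequality \eqref{sobolev-02} with $q=2$ gives $\|\Pi_Wu_h\|_{\mathcal{T}_h}\le C(\|\nabla\Pi_Wu_h\|_{\mathcal{T}_h}+\alpha^{1/2}\|h_\tau^{-1/2}(\Pi_k^\partial\Pi_Wu_h-\Pi_Mu_h)\|_{\partial\mathcal{T}_h})$. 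By the first step this is at most $C\|u_h\|_{-1,h}$, and dividing by $\|u_h\|_{-1,h}$ finishes the proof.

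The only delicate point I expect is keeping the projection $\Pi_k^\partial$ versus the raw trace consistent in the face terms. This is handled by the polynomial degree of the normal flux in the second step and by \eqref{es_u} in the first.
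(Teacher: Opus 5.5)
Your proof is correct. The paper gives no proof of its own and imports the lemma from \cite{CHSZ2023}; your derivation from \eqref{def_-1_h}, \eqref{bounded-A}, \eqref{es_u} and \eqref{sobolev-02} is the standard one, and your chain for \eqref{lem-nega} followed by \eqref{nega-imb} is exactly the argument the paper uses later to obtain \eqref{temp-2-04}. You correctly cover both the printed weight $h_\tau$ in \eqref{lem-nega} (evidently a typo for $h_\tau^{-1/2}$) and the intended one. One small point: in \eqref{Cauchy} the face term $\langle\bm{\Pi}_{\bm V}u_h\cdot\bm n,\Pi_k^{\partial}w_h-\mu_h\rangle_{\partial\mathcal{T}_h}$ carries no factor $\alpha$, so your constant $C$ there depends on $\alpha^{-1/2}$, which is harmless since $\alpha$ is fixed.
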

	\subsection{The HDG elliptic projection}\label{section 2.1}
	Given $ f_1, f_2\in L^2(\Omega) $,  let $ (\psi, \bm{\Psi};\theta, \bm{\Theta}) $ denote the solution of the following equations:
	\begin{subequations}\label{dual equation}
		\begin{align}
			&\bm{\Theta}+\nabla\theta=\bm{0}, \ \frac{1}{Pe}\nabla\cdot\bm{\Theta}+\nabla\cdot(\bm{\beta}\psi)=f_1, \ \textup{in} \ \Omega, \ 
			\bm{\Theta}\cdot\bm{n}=\bm{0}\ \textup{on} \ \partial\Omega, \ \int_{\Omega}\theta\ {\rm d}\bm{x}=0, \label{eq-phi-u-convective-possion}\\
			&\bm{\Psi}+\nabla\psi=\bm{0}, \ \nabla\cdot\bm{\Psi}=f_2, \ \textup{in}\ \Omega, \ \bm{\Psi}\cdot\bm{n}=\bm{0}, \ \textup{on} \ \partial\Omega,  \ \int_{\Omega}\psi \ {\rm d}\bm{x}=0.\label{eq-u-possion}
		\end{align}
	\end{subequations}
	If $ \Omega $ is convex,  we can obtain the following regularity results:
	\begin{subequations}
		\begin{align}
			\|\bm{\Psi}\|_{H^1(\Omega)}+\|\psi\|_{H^2(\Omega)}&\le C\|f_2\|_{L^2(\Omega)}, 
			\label{regularity--condition-u}\\
			\|\bm{\Theta}\|_{H^1(\Omega)}+\|\theta\|_{H^2(\Omega)}&\le C(\|f_1\|_{L^2(\Omega)}+\|\bm{\beta}\|_{0, \infty}\|\psi\|_{H^1(\Omega)})\nonumber \\
			&\le C (\|f_1\|_{L^2(\Omega)}+\|f_2\|_{L^2(\Omega)}).	\label{regularity--condition-phi-u}
		\end{align}
	\end{subequations}
	\begin{remark}Regularity result \eqref{regularity--condition-u} follows directly from the standard analysis of the Poisson equation. For \eqref{regularity--condition-phi-u},  given the solution $\psi$  of \eqref{eq-u-possion},  we have
		\begin{align*}
			\bm{\Theta}+\nabla\theta=\bm{0}, \ \frac{1}{Pe}\nabla\cdot\bm{\Theta}=f_1-\nabla\cdot(\bm{\beta}\psi), 
		\end{align*}
		Applying the same regularity analysis,  we obtain the estimate in \eqref{regularity--condition-phi-u}.
	\end{remark}
	
	Recall that  $(\bm p,  \phi,  \bm q,  u)$  is  the solution of the Cahn-Hilliard equation in mixed form \eqref{mixed}.
	For all $t\in [0, T]$,  we define the HDG elliptic projections as follows:
	Find $(\bm p_{Ih}, \phi_{Ih}, \widehat{\phi}_{Ih})$,  $  (\bm q_{Ih},  u_{Ih}, \widehat u_{Ih})\in \bm V_h\times W_h\times M_h$ such that
	\begin{subequations}\label{HDG-projection}
		\begin{align}
			&\mathcal{B}(u_{Ih}, \widehat{u}_{Ih};w_1)+\frac{1}{Pe}\mathcal{A}(\bm{p}_{Ih}, \phi_{Ih}, \widehat{\phi}_{Ih};\bm{r}_1, w_1, \mu_1)=(\nabla\cdot(\bm{\beta}u)-\frac{1}{Pe}\Delta \phi, w_1)_{\mathcal{T}_h}, \label{HDG-projection--phi--u}\\
			&\mathcal{A}(\bm{q}_{Ih}, u_{Ih}, \widehat{u}_{Ih};\bm{r}_2, w_2, \mu_2)=(-\Delta u, w_2)_{\mathcal{T}_h}\label{HDG-projection--u}, \
			(\phi_{Ih}-\phi, 1)_{\mathcal{T}_h}=(u_{Ih}-u, 1)_{\mathcal{T}_h}=0.
		\end{align}
	\end{subequations}
	\begin{remark}
			The solution of \eqref{HDG-projection--phi--u} and \eqref{HDG-projection--u} exists and is unique. The existence and uniqueness of equation \eqref{HDG-projection--u} is implied by \Cref{A-LBB}. Once we obtain $ (\bm{q}_{Ih}, u_{Ih}, \widehat{u}_{Ih}) $,  substituting it into \eqref{HDG-projection--phi--u} and using the discrete LBB condition in \Cref{A-LBB},  the existence and uniqueness of the solution to \eqref{HDG-projection--phi--u} follow immediately.
	\end{remark}
	Denote the norm  and seminorm on the Hilbert space,  $ H^s(\Omega) $,  by $ \|\cdot\|_s $ and $ |\cdot|_s $. We have the following approximation property for the HDG elliptic projection \eqref{HDG-projection}.
	\begin{lemma}\label{HDG-elliptic-projection-convergence results}
		Assuming that the regularity conditions \eqref{regularity--condition-u} and \eqref{regularity--condition-phi-u} are satisfied,  suppose the solution $(\boldsymbol{p},  \phi,  \boldsymbol{q},  u)$ possesses the maximum regularity required to achieve the best approximation results in \eqref{classical_ine}. Let $(\boldsymbol{p}_{I h},  \phi_{I h},  \widehat{\phi}_{I h})$ and $\left(\boldsymbol{q}_{I h},  u_{I h},  \widehat{u}_{I h}\right)$ be the solutions of  \eqref{HDG-projection}. Then,  the following error estimates hold:
		\begin{subequations}\label{approximation-HDG-projection}
			\begin{align}
				\|u-u_{Ih}\|_{\mathcal{T}_h}&\le Ch^{k+2}|u|_{k+2}, \label{error-u-projection}\\
				\|\bm{q}-\bm{q}_{Ih}\|_{\mathcal{T}_h}+\alpha^{1/2}\|h_{\tau}^{-1/2}(\Pi_k^{\partial}u_{Ih}-\widehat{u}_{Ih})\|_{\partial\mathcal{T}_h}&\le Ch^{k+1}|u|_{k+2}, \label{error-q-projection}\\
				\|\partial_tu-\partial_tu_{Ih}\|_{\mathcal{T}_h}&\le Ch^{k+2}|\partial_tu|_{k+2}, \label{error-ut-projection}\\
				\|\bm{p}-\bm{p}_{Ih}\|_{\mathcal{T}_h}+\alpha^{1/2}\|h_{\tau}^{-1/2}(\Pi_k^{\partial}\phi_{Ih}-\widehat{\phi}_{Ih})\|_{\partial\mathcal{T}_h}&\le Ch^{k+1}(|u|_{k+1}+|u|_{k+2}+|\phi|_{k+2}),  \label{error-p-projection}\\
				\|\phi-\phi_{Ih}\|_{\mathcal{T}_h}&\le Ch^{k+2}(|\phi|_{k+2}+|u|_{k+2}+|u|_{k+1}).\label{error-phi-projection}
			\end{align}
		\end{subequations}
	\end{lemma}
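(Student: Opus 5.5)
The plan is to exploit the triangular structure of \eqref{HDG-projection}. The $u$-part \eqref{HDG-projection--u} is a pure HDG Laplace problem, decoupled from $\phi$. We first prove \eqref{error-u-projection}--\eqref{error-ut-projection} and then treat \eqref{HDG-projection--phi--u} as an HDG Laplace problem for $\phi_{Ih}$, with the convective term entering only as a perturbation. Throughout we split each error into a projection part and a discrete part:
\[
\bm q-\bm q_{Ih}=(\bm q-\bm\Pi_k^o\bm q)+(\bm\Pi_k^o\bm q-\bm q_{Ih}), \qquad u-u_{Ih}=(u-\Pi_{k+1}^o u)+(\Pi_{k+1}^o u-u_{Ih}),
\]
with $\widehat u$ replaced by its face projection onto $M_h$. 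The mean constraint gives $\Pi_{k+1}^o u-u_{Ih}\in\mathring W_h$, and similarly for $\phi$.

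\textbf{Energy estimates.} Substituting the exact solution into $\mathcal A$ and integrating by parts yields an error equation of the form
\[
\mathcal A(\bm\varepsilon^q_h,\varepsilon^u_h,\widehat\varepsilon^u_h;\bm r_2,w_2,\mu_2)=\text{consistency terms}.
\]
Because of the Lehrenfeld--Sch\"oberl stabilization with $\Pi_k^\partial$ and the degrees $k$ and $k+1$, the consistency terms reduce to $\langle(\bm q-\bm\Pi_k^o\bm q)\cdot\bm n,\Pi_k^\partial w_2-\mu_2\rangle$ plus a stabilization term involving $\Pi_k^\partial(u-\Pi_{k+1}^ou)$. Each is bounded by $Ch^{k+1}|u|_{k+2}$ times the energy norm of the test function. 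Testing with the error itself via \eqref{bounded-A} gives \eqref{error-q-projection}; \eqref{es_u} then controls $\nabla\varepsilon^u_h$ as well.

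\textbf{$L^2$ estimate for $u$.} We use an Aubin--Nitsche argument with the dual problem \eqref{eq-u-possion}, taking $f_2=\varepsilon^u_h$ (admissible, since it has mean zero). We test the discrete error equation with the HDG interpolants of $(\bm\Psi,\psi)$ and use \eqref{nonsymmetric-A} to transfer the problem onto the dual side. Each term is then a product of an $O(h^{k+1})$ primal error and an $O(h)$ dual approximation error, bounded using \eqref{regularity--condition-u}. This gives $O(h^{k+2})$. Since $\partial_t$ commutes with the projection, \eqref{error-ut-projection} follows by the same argument applied to $\partial_t u$.

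\textbf{The $\phi$ system.} We subtract the exact identity
\[
\mathcal B(u,u;w_1)+\tfrac1{Pe}\mathcal A(\bm p,\phi,\phi;\cdot)=\text{RHS}
\]
(consistent because $\bm\beta\cdot\bm n=0$ and $\widehat u$ is single valued). This gives
\[
\tfrac1{Pe}\mathcal A(\text{$\phi$-error};\cdot)=-\mathcal B(u-u_{Ih},\Pi^\partial u-\widehat u_{Ih};w_1)+\text{consistency}.
\]
Testing with the discrete $\phi$-error and applying \eqref{B-es} bounds the $\mathcal B$ term by
\[
\bigl(\|u-u_{Ih}\|+h\|h_\tau^{-1/2}(\cdots)\|\bigr)\interleave\cdot\interleave=O(h^{k+1}),
\]
using the already-proven estimates. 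This yields \eqref{error-p-projection}.

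\textbf{$L^2$ estimate for $\phi$ (main obstacle).} Here the duality must use the coupled system \eqref{dual equation} with $f_1=\varepsilon^\phi_h$ and $f_2=0$. The difficulty is the convection term. Tested against the dual interpolant, $\mathcal B(u-u_{Ih},\cdot;\Pi\theta)$ pairs an $O(h^{k+2})$ $L^2$ error with an $O(1)$ factor, which is acceptable. Its face part, however, involves $\widehat u_{Ih}$ and only has $O(h^{k+1})\cdot h$, which also suffices through the factor $h$ in \eqref{B-es}. The delicate step is that the consistency error of $\mathcal B$ acting on $u-\Pi_{k+1}^ou$ and on the jump $\Pi_k^\partial u_{Ih}-\widehat u_{Ih}$ must be paired with the dual term $\nabla\cdot(\bm\beta\psi)$. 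This pairing is exactly why the dual problem is coupled. We would rewrite $-(\bm\beta(u-u_{Ih}),\nabla\Pi\theta)$ by moving it onto the $\psi$-equation via \eqref{nonsymmetric-A}, so that only the $L^2$ error of $u$ and $O(h)\cdot O(h^{k+1})$ products remain. Crucially, no upwind term appears, so there is no $\tau_c$ jump contribution of order $h^{k+1/2}$; this is what keeps the estimate optimal even for $k=0$. Combining with \eqref{regularity--condition-phi-u} gives \eqref{error-phi-projection}.
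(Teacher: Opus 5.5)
Your overall architecture is the paper's: an energy estimate for the $\phi$-errors with the convective term treated as a source, followed by Aubin--Nitsche duality. Several of your choices are sound.

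\begin{itemize}
\item Taking $f_2=0$ is legitimate. The paper takes $f_2=-\varepsilon_h^u$, but the resulting $\psi$-terms $S_2+S_5+S_6=-(\bm\beta\cdot\nabla\psi,\varepsilon_h^\phi)_{\mathcal T_h}$ are only $O(h^{k+2})\|\varepsilon_h^\phi\|_{\mathcal T_h}$ and play no structural role.
\item The discrete convective term can be bounded directly by \eqref{B-bound-ori} with $w_h=\Pi_{k+1}^o\theta$, $\mu_h=\Pi_k^\partial\theta$. This gives $C(\|\varepsilon_h^u\|_{\mathcal T_h}+h\|h_\tau^{-1/2}(\Pi_k^\partial\varepsilon_h^u-\varepsilon_h^{\widehat u})\|_{\partial\mathcal T_h})\|\theta\|_{H^1(\Omega)}=O(h^{k+2})\|\theta\|_{H^1(\Omega)}$. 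That is a valid alternative to the paper's integration by parts onto $\theta$ in $T_1$. Cite \eqref{B-bound-ori}, not \eqref{B-es}, because this test pair does not satisfy the hypothesis of \eqref{B-es}.
\end{itemize}

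However, the paragraph you single out as the main obstacle does not describe a workable step. With $f_2=0$ you have $\psi\equiv0$, so there is no $\nabla\cdot(\bm\beta\psi)$ to pair with. Moreover, \eqref{nonsymmetric-A} is a symmetry of $\mathcal A$ and cannot transfer a $\mathcal B$-term anywhere. The volume term $-(\bm\beta(u-u_{Ih}),\nabla\Pi_{k+1}^o\theta)_{\mathcal T_h}$ needs no such treatment: Cauchy--Schwarz and \eqref{error-u-projection} already give $O(h^{k+2})\|\theta\|_{H^1(\Omega)}$.

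What is genuinely delicate, and missing from your sketch, is the face consistency terms when $k=0$. Testing the $\phi$-error equation with $(\bm\Pi_k^o\bm\Theta,\Pi_{k+1}^o\theta,\Pi_k^\partial\theta)$ produces two such terms:
\begin{itemize}
\item $\langle\bm\beta\cdot\bm n(\Pi_k^\partial u-u),\Pi_{k+1}^o\theta-\Pi_k^\partial\theta\rangle_{\partial\mathcal T_h}$;
\item $\langle(\bm\Pi_k^o\bm p-\bm p)\cdot\bm n,\Pi_{k+1}^o\theta-\Pi_k^\partial\theta\rangle_{\partial\mathcal T_h}$.
\end{itemize}
Their first factors are $O(h^{k+1/2})$ in $L^2(\partial\mathcal T_h)$. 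For $k=0$ one only has $\|\theta-\Pi_0^\partial\theta\|_{\partial\mathcal T_h}=O(h^{1/2})$, so the naive bound is $O(h^{k+1})$, one power short.

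The paper (terms $T_2$ and $T_5$) repairs this by replacing $\Pi_k^\partial\theta$ with $\theta$. The following facts make both pairings against $\Pi_k^\partial\theta-\theta$ vanish:
\begin{itemize}
\item $u$, $\Pi_k^\partial u$, $\theta$ and $\Pi_k^\partial\theta$ are single-valued on interior faces;
\item $\bm\beta$ and $\bm p$ have continuous normal components across interior faces;
\item $\bm\beta\cdot\bm n=\bm p\cdot\bm n=0$ on $\partial\Omega$;
\item for the flux term, additionally $\bm\Pi_k^o\bm p\cdot\bm n|_E\in\mathcal P^k(E)$.
\end{itemize}
What remains are products with $\|\Pi_{k+1}^o\theta-\theta\|_{\partial\mathcal T_h}=O(h^{3/2})$, which are $O(h^{k+2})$.

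This cancellation is exactly what fails for the upwind term $T_6$, because $\Pi_{k+1}^o u-\Pi_k^\partial u$ is double-valued. Your remark that no $\tau_c$ jump contribution of order $h^{k+1/2}$ appears does not by itself explain why the non-upwind face term, which is also of order $h^{k+1/2}$, is harmless. Replace your $\psi$-pairing paragraph with this single-valuedness argument.
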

	
	The proof of \eqref{error-u-projection}–\eqref{error-ut-projection} can be found in {\cite[Lemma 3.7]{CHSZ2023}}. The proofs of \eqref{error-p-projection}  and \eqref{error-phi-projection} follow from the third Strang lemma and the Aubin–Nitsche technique,  as presented in \cite{DiPietroDanieleA.2018AtSl}. For completeness,  the detailed derivations are provided in \Cref{Appendix}.
	\begin{remark}\label{approximationprojectionremark}
		As discussed in \Cref{projection--proof--remark},  we observe that if the operator $\mathcal{B}$ is used in the form of \eqref{def_B_1},  then the conclusions of \eqref{error-u-projection}–\eqref{error-p-projection} remain valid. However,  the convergence rate of $\|\phi - \phi_{I h}\|_{\mathcal{T}_h}$ is suboptimal when $k = 0$,  but optimal for $k \geq 1$; see \Cref{projection--proof--remark} for more details. More specifically,  we have the following estimate for $\|\phi - \phi_{Ih}\|_{\mathcal{T}_h}$: 
		\begin{align*}
			\|\phi-\phi_{Ih}\|_{\mathcal{T}_h} \le Ch^{k+1+\min\{k, 1\}}(|\phi|_{k+2}+|u|_{k+2}+|u|_{k+1}).
		\end{align*}
	\end{remark}

	We also recall the definition of the discrete Laplacian from \cite{CHSZ2023}. For any $u_h\in  W_h$,  we define $\Delta_h u_h\in  W_h$ by
	\begin{align}\label{Delta_h}
		(\Delta_h u_h,  w_h)_{\mathcal{T}_h}=-\mathcal{A}(\bm q_h^u, u_h, \widehat u_h^u;\bm r_h, w_h, \mu_h), \quad (\bm r_h, w_h, \mu_h)\in \bm V_h\times W_h\times M_h.
	\end{align}
	Here,     $(\bm q_h^u, \widehat u_h^u)\in \bm V_h\times M_h$ satisfies 
	\begin{align*}
		\mathcal{A}(\bm q_h^u, u_h, \widehat u_h^u;\bm r_h, 0, \mu_h)=0,  \quad \forall (\bm r_h, \mu_h)\in \bm V_h\times M_h.
	\end{align*}
	Then the following stability result holds.
	\begin{lemma}[{\cite[Lemma 3.8]{CHSZ2023}\label{lap_u_h2}\label{Lemma 2.4}}]
		For all $w_h\in  W_h$,  we have 
		\begin{align}\label{HDG-uniform}
			\|w_h\|_{L^\infty(\Omega)}\le C\|\Delta_hw_h\|_{\mathcal T_h}, 
		\end{align}
		where $C$ depends on $\Omega$.
	\end{lemma}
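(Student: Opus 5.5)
The statement as printed cannot hold for constants: if $w_h\equiv c$, then $\Delta_h w_h=0$ while $\|w_h\|_{L^\infty}=|c|$. I will therefore prove it for $w_h\in\mathring W_h$ (equivalently, with $|\bar w_h|$ added to the right-hand side), which is how it is used. The approach is a duality comparison with the continuous Neumann problem, followed by an inverse estimate.

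\emph{Step 1: set up the continuous problem.} Put $g:=\Delta_h w_h\in W_h$. Testing \eqref{Delta_h} with $(\bm r_h,w_h,\mu_h)=(\bm 0,1,1)$ gives zero. The reason is that $(\nabla\cdot\bm q_h^w,1)_K=\langle \bm q_h^w\cdot\bm n,1\rangle_{\partial K}$ elementwise, and the stabilization term vanishes because $\Pi_k^\partial 1-1=0$. Hence $g$ has mean zero. Let $z\in H^1(\Omega)\cap L^2_0(\Omega)$ solve the Neumann problem $\Delta z=g$ in $\Omega$, $\nabla z\cdot\bm n=0$ on $\partial\Omega$. Since $\Omega$ is convex, \eqref{regularity--condition-u} gives $\|z\|_{2}\le C\|g\|_{\mathcal T_h}$. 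Since $d\le 3$, the embedding $H^2(\Omega)\hookrightarrow L^\infty(\Omega)$ gives $\|z\|_{L^\infty(\Omega)}\le C\|g\|_{\mathcal T_h}$.

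\emph{Step 2: identify $w_h$ as an HDG approximation of $z$.} By \eqref{Delta_h} and the definition of $(\bm q_h^w,\widehat w_h^w)$, the triple $(\bm q_h^w,w_h,\widehat w_h^w)$ satisfies
\[
\mathcal A(\bm q_h^w,w_h,\widehat w_h^w;\bm r_h,v_h,\mu_h)=(-\Delta z,v_h)_{\mathcal T_h}\quad\text{for all }(\bm r_h,v_h,\mu_h)\in\bm V_h\times W_h\times M_h .
\]
This is exactly the HDG elliptic projection \eqref{HDG-projection--u} of $z$. By uniqueness (Lemma~\ref{A-LBB}), and because both $w_h$ and $z$ have mean zero, $w_h=z_{Ih}$.

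\emph{Step 3: low-regularity error estimate.} I would rerun the argument behind \eqref{error-u-projection} (energy estimate, then Aubin--Nitsche with \eqref{regularity--condition-u}) using only $z\in H^2$. This should give
\[
\|z-w_h\|_{\mathcal T_h}\le Ch^{2}|z|_2\le Ch^2\|g\|_{\mathcal T_h}\quad\text{for every }k\ge0 .
\]
This is the step I expect to be the main obstacle. The cited estimate is stated for the maximal regularity $|u|_{k+2}$, so one must check that the consistency terms involving $\Pi_k^\partial$ and $\bm\Pi_k^o$ are controlled by $h^{\min(k+1,1)}|z|_2$. In the energy step this gives order $h$; the duality step contributes one more power of $h$.

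\emph{Step 4: conclude.} Write $w_h=\Pi_{k+1}^o z+(w_h-\Pi_{k+1}^o z)$.
\begin{itemize}
\item The first term satisfies $\|\Pi_{k+1}^o z\|_{L^\infty}\le C\|z\|_{L^\infty}$. This follows from the $L^\infty$ stability of the elementwise $L^2$ projection, which combines \eqref{classical_ine_6} on each element with $\|\Pi^o_{k+1}z\|_{L^2(K)}\le\|z\|_{L^2(K)}\le Ch_K^{d/2}\|z\|_{L^\infty(K)}$.
\item For the second term, the inverse inequality \eqref{classical_ine_6} with $q=\infty$, Step~3 and \eqref{classical_ine_1} give
\[
\|w_h-\Pi_{k+1}^o z\|_{L^\infty}\le Ch^{-d/2}\bigl(\|w_h-z\|_{\mathcal T_h}+\|z-\Pi_{k+1}^o z\|_{\mathcal T_h}\bigr)\le Ch^{2-d/2}\|g\|_{\mathcal T_h}.
\]
Since $d\le3$, the exponent $2-d/2$ is positive, so this is at most $C\|g\|_{\mathcal T_h}$.
\end{itemize}
Combining the two terms with Step~1 gives \eqref{HDG-uniform}, since $g=\Delta_h w_h$.
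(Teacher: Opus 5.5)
Your argument is correct and follows the standard route behind the cited \cite[Lemma 3.8]{CHSZ2023}, which is also the mechanism the paper uses for \eqref{u_Ih_infty_bound}. You identify $w_h$ with the HDG elliptic projection $z_{Ih}$ of the Neumann solution $z$ and use $\|z-z_{Ih}\|_{\mathcal T_h}\le Ch^2|z|_2$. That estimate does hold for every $k\ge0$: since $W_h$ has degree $k+1\ge1$, the consistency terms are $O(h)|z|_2$ in the energy norm, and the duality step gains one more power of $h$. The inverse estimate and $H^2(\Omega)\hookrightarrow L^\infty(\Omega)$ then finish the proof.

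Your remark about constants is also right: $\Delta_h c=0$, so the lemma must be read for $w_h\in\mathring W_h$, or with $|\bar w_h|$ added to the right-hand side. This changes nothing in its application to $u_h^n$, because the scheme conserves $(u_h^n,1)_{\mathcal T_h}=(u^0,1)_{\mathcal T_h}$.
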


	\section{The stability estimates}\label{E_U_S}
	{ Before presenting the stability estimates,  we first establish the well-posedness of the HDG scheme \eqref{HDG formula}. 
		\begin{theorem}[Well-posedness]\label{thm:wellposedness} For each $n\ge 1$,  given $(u_h^{n-1}, \widehat u_h^{\, n-1})$,  the nonlinear HDG system \eqref{HDG formula}
			admits a unique solution $(\bm p_h^{n}, \phi_h^{n}, \widehat\phi_h^{\, n}, \bm q_h^{n}, u_h^{n}, \widehat u_h^{\, n})$.
		\end{theorem}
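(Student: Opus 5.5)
Since the convection term $\mathcal B(u_h^{n-1},\widehat u_h^{n-1};w_1)$ involves only data from the previous step, the system \eqref{HDG formula} at step $n$ is a nonlinear problem of the same structure as the convection-free scheme of \cite{CHSZ2023}, with a known extra right-hand side $g(w_1):=-\mathcal B(u_h^{n-1},\widehat u_h^{n-1};w_1)$. My plan is to recast it as the minimization of a strictly convex, coercive functional on a mean-constrained affine space. Existence will then follow from coercivity, and uniqueness from strict convexity; equivalently, I will use a monotonicity argument on the difference of two solutions.

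\textbf{Step 1 (mass and reduction).} Take $w_1=1$, $\bm r_1=\bm 0$, $\mu_1=1$ in \eqref{hdg01}. All $\mathcal A$ terms vanish because $\Pi_k^\partial 1-1=0$. Also $\mathcal B(\cdot;1)=\langle\bm\beta\cdot\bm n\widehat u_h^{n-1},1\rangle_{\partial\mathcal T_h}=0$ by single-valuedness of $\widehat u_h^{n-1}$ and $\bm\beta\cdot\bm n=0$ on $\partial\Omega$, exactly as in the proof of \eqref{B-bound-ori}. Hence $(u_h^n,1)=(u_h^{n-1},1)=:m$, so $u_h^n-\bar m\in\mathring W_h$, where $\bar m$ is the constant with mean $m$. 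Next, write $g$ as a functional on $\mathring W_h$. Since $g(1)=0$, $g$ is represented by some $z_h\in\mathring W_h$ with $g(w)=(z_h,w)$ for all $w\in W_h$. Now \eqref{hdg01} reads $\mathcal A(\bm p_h^n,\phi_h^n,\widehat\phi_h^n;\cdot)=Pe(z_h-\partial_t^+u_h^n,w_1)$. By the definition \eqref{def_-1_h} of the discrete Laplace inverse and by \Cref{A-LBB}, this determines $(\bm p_h^n,\phi_h^n,\widehat\phi_h^n)$ up to the constant in $\phi_h^n$:
\begin{align*}
\phi_h^n=Pe\,\Pi_W\big(z_h-(u_h^n-u_h^{n-1})/\Delta t\big)+c,\qquad c\in\mathbb R .
\end{align*}

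\textbf{Step 2 (variational form).} Substituting this into \eqref{hdg02} gives an equation for $v=u_h^n-\bar m\in\mathring W_h$ alone. Using \eqref{hdg02} with $w_2=1$ fixes $c$. I then identify the resulting equation as the Euler--Lagrange equation on $\mathring W_h$ (with $\widehat u,\bm q$ eliminated via the $\mathcal A$ structure, i.e.\ the discrete Laplacian \eqref{Delta_h}) of
\begin{align*}
J(v)=\frac{Pe}{2\Delta t}\|v+\bar m-u_h^{n-1}\|_{-1,h}^2-Pe(z_h,\Pi_W v)+\frac{\epsilon^2}{2}\interleave(\bm q^{v},v,\widehat u^{v})\interleave^2+\frac14\|v+\bar m\|_{L^4}^4-(u_h^{n-1},v).
\end{align*}
The symmetry of $(\cdot,\Pi_W\cdot)$ and \eqref{bounded-A} will justify this identification. $J$ is strictly convex: the quadratic parts are nonnegative, the quartic part is strictly convex, and the only explicit term is linear, which is precisely where the explicit convection pays off. $J$ is also coercive on $\mathring W_h$, since the $\epsilon^2$ term together with \eqref{sobolev-02} controls $\|v\|$. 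Being a finite-dimensional problem, $J$ has a unique minimizer, and this gives $u_h^n$. I then recover $(\bm q_h^n,\widehat u_h^n)$ and $(\bm p_h^n,\phi_h^n,\widehat\phi_h^n)$ uniquely through the LBB condition \Cref{A-LBB}.

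\textbf{Fallback for uniqueness and main obstacle.} Alternatively, for uniqueness I would subtract the equations of two solutions, test the first with $\Pi_W(\delta u)$ and the second with $\delta u$, and use \eqref{nonsymmetric-A}. This yields $\frac{Pe}{\Delta t}\|\delta u\|_{-1,h}^2+\epsilon^2\interleave\cdot\interleave^2+((u_1)^3-(u_2)^3,\delta u)=0$, and each term is nonnegative, so $\delta u=0$. The main obstacle I expect is the bookkeeping in Step 2: handling the constant $c$, showing that $g$ is representable in $\mathring W_h$, and checking that the eliminated system is exactly the gradient of $J$ given the sign conventions in $\mathcal A$. I would verify this step carefully through \eqref{nonsymmetric-A}.
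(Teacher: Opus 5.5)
Your proof is correct, but it follows a different route from the paper. The paper does not reprove well-posedness. It defers to the existence/uniqueness argument of \cite[Theorems~4.1--4.2]{ChenGang2018Otso}, which rests on an a priori energy bound for the step problem. Its only new ingredient is a Young-inequality bound on the explicit data term $\Delta t\,\mathcal B(u_h^{n-1},\widehat u_h^{\,n-1};\overline\phi_h^{\,n})$, obtained via \eqref{B-es}, \eqref{sobolev-02} and \eqref{es_u} and absorbed into the dissipation $\frac{\Delta t}{2Pe}\interleave(\overline{\bm p}_h^{\,n},\overline\phi_h^{\,n},\widehat{\overline\phi}_h^{\,n})\interleave^2$.

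You instead show that, after mass conservation and elimination through $\Pi_W$, one time step is a minimizing movement in the discrete $H^{-1}$ metric. The explicit convection enters only as the linear term $-Pe(z_h,\Pi_W v)$, through its mean-zero Riesz representer. This uses only the linearity of $\mathcal B$ and $\mathcal B(\cdot\,;1)=0$, not the quantitative bound \eqref{B-es}, and it yields existence and uniqueness together.

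The identification you flagged as the main obstacle does go through:
\begin{itemize}
\item The local-solver relations give $(f,\Pi_W g)_{\mathcal T_h}=(\bm\Pi_{\bm V}f,\bm\Pi_{\bm V}g)_{\mathcal T_h}+\alpha\langle h_\tau^{-1}(\Pi_k^\partial\Pi_W f-\Pi_M f),\Pi_k^\partial\Pi_W g-\Pi_M g\rangle_{\partial\mathcal T_h}$, which is symmetric.
\item Likewise $\mathcal A(\bm q^v,v,\widehat u^v;\bm q^w,w,\widehat u^w)=-(\Delta_h v,w)_{\mathcal T_h}$ is symmetric in $v,w$.
\item Testing \eqref{hdg02} with $w_2=0$ shows that $(\bm q_h^n,\widehat u_h^n)$ is exactly the local lift of $u_h^n$, so your reduction is an equivalence.
\end{itemize}

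Two small corrections. In Step~1, the flux terms of $\mathcal A(\cdot\,;\bm 0,1,1)$ cancel by elementwise integration by parts, $(\nabla\cdot\bm p_h^n,1)_{\mathcal T_h}=\langle\bm p_h^n\cdot\bm n,1\rangle_{\partial\mathcal T_h}$; the identity $\Pi_k^\partial 1=1$ only removes the stabilization term. Also, coercivity of $J$ already follows from its first term, since $\|\cdot\|_{-1,h}$ is a norm on the finite-dimensional space $\mathring W_h$.

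Your fallback uniqueness argument uses the same energy identity that drives \Cref{lemma-negative-norm}.

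On trade-offs: your route is self-contained and explains structurally why explicit convection preserves unconditional solvability. The paper's a priori-bound route reuses existing machinery and does not need a gradient structure. It would therefore survive modifications, such as an implicit convection term, under which your functional $J$ no longer exists.
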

		
		\begin{proof}
			The existence and uniqueness follow from the same argument as in
			\cite[Theorems~4.1--4.2]{ChenGang2018Otso}. The only additional ingredient in the convective case is the estimate
			of the explicit convection contribution. Using  \eqref{B-es},  the Cauchy--Schwarz
			inequality,  Young's inequality,  \eqref{sobolev-02},  and \eqref{es_u},  we obtain
			\begin{align*}
				&-(u_h^{n-1}, \overline{\phi}_h^{\, n})_{\mathcal{T}_h}
				+\Delta t\, \mathcal{B}(u_h^{n-1}, \widehat{u}_h^{\, n-1};\overline{\phi}_h^{\, n})\\
				&\le
				\|u_h^{n-1}\|_{\mathcal{T}_h}\, \|\overline{\phi}_h^{\, n}\|_{\mathcal{T}_h}
				+C\Delta t\Bigl(\|u_h^{n-1}\|_{\mathcal{T}_h}
				+h\|h_{\tau}^{-1/2}(\Pi_{k}^{\partial}u_h^{n-1}-\widehat{u}_h^{\, n-1})\|_{\partial\mathcal{T}_h}\Bigr)
				\interleave(\overline{\bm{p}}_h^{\, n}, \overline{\phi}_h^{\, n}, \widehat{\overline{\phi}}_h^{\, n})\interleave\\
				&\le \frac{\Delta t}{2Pe}\interleave(\overline{\bm{p}}_h^{n}, \overline{\phi}_h^{n}, \widehat{\overline{\phi}}_h^{n})\interleave^2
				+\frac{C}{\Delta t}\|u_h^{n-1}\|_{\mathcal{T}_h}^2
				+C\Delta t(\|u_h^{n-1}\|_{\mathcal{T}_h}
				+h\|h_{\tau}^{-1/2}(\Pi_{k}^{\partial}u_h^{n-1}-\widehat{u}_h^{n-1})\|_{\partial\mathcal{T}_h})^2.
			\end{align*}
			With this estimate,  the remainder of the existence/uniqueness argument is identical to
			\cite{ChenGang2018Otso},  and we omit the details.
	\end{proof}}

	The fundamental energy stability bounds for the HDG scheme are established in the following lemma.
	\begin{lemma}\label{corEnergy}
		For any $h,  \Delta t>0$ and $m=1,  2,  \ldots,   N$,   the numerical solution of the HDG scheme satisfies the following stability bounds:
		\begin{subequations}
			\begin{align}
				&\frac{1}{4}\|( u_h^m)^2-1\|^2_{\mathcal{T}_h}+ \frac{\epsilon^2}{2}\interleave(\bm q_h^m,  u_h^m, \widehat{u}_h^m)\interleave^2 +\sum_{n=1}^m \frac{\Delta t}{2Pe}\interleave(\bm p_h^n,  \phi_h^n, \widehat{\phi}_h^n)\interleave^2 \label{corEnergy_eq} \\
				&\qquad+\frac{\epsilon^2}{2} \sum_{n=1}^m \Big(\|\Delta t \partial_t^+  \bm q_h^n\|^2_{\mathcal{T}_h}+ \alpha\|\Delta t h_{\tau}^{-1/2}\partial_{t}^+ (\Pi_k^{\partial}u_h^n-\widehat u_h^n)\|^2_{\partial\mathcal{T}_h}\Big) \leq C,  \nonumber \\
				& \frac{1}{Pe}\|\phi_h^{m}\|_{\mathcal T_h}^2+\frac{1}{Pe}\sum_{n=1}^{m}\|\phi_h^n-\phi_h^{n-1}\|^2_{\mathcal T_h}+\epsilon^2 \Delta t\sum_{n=1}^{m}\|\partial_{t}^+ u_h^n\|_{\mathcal T_h}^2 \leq C,   \label{phi_infty-1} \\
				& \|\Delta_h u_h^n\|_{\mathcal{T}_h}+ \| u_h^n\|_{L^\infty(\Omega)} \le C,  \quad n =1,  2\ldots N. \label{uh_infty_bounded}
			\end{align}
			{We note that the  generic constant $C$ is independent of $h$ and $\Delta t$, but may depend on $\epsilon$ and $Pe$.}  Moreover,  if $ \|u\|_{L^{\infty}(H^2(\Omega))}\le C $,  then the uniform bound of the HDG elliptic projection $ u_{Ih} $ holds:
			\begin{align}
				\|u_{Ih}^n\|_{L^{\infty}(\Omega)}\le C,   \quad n =1,  2\ldots N.\label{u_Ih_infty_bound}
			\end{align}
		\end{subequations}
		
	\end{lemma}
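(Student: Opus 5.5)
The plan follows the standard convex-splitting energy argument, adapted to handle the explicit convection term. I will test the scheme as follows:
\begin{itemize}
\item take \eqref{hdg01} with $(\bm r_1,w_1,\mu_1)=(\bm q_h^n,\phi_h^n,\widehat\phi_h^n)$ (up to signs, using \eqref{nonsymmetric-A});
\item take \eqref{hdg02} with $(\bm r_2,w_2,\mu_2)=(\bm p_h^n,-\partial_t^+u_h^n,\ldots)$;
\item more precisely, take \eqref{hdg01} with $\phi_h^n$ and \eqref{hdg02} with $\partial_t^+u_h^n$, together with the $\partial_t^+$-differenced $\bm r$-equations of \eqref{hdg02}.
\end{itemize}

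Adding these cancels the $(\phi_h^n,\partial_t^+u_h^n)$ coupling. By \eqref{bounded-A} and the identity $2a(a-b)=a^2-b^2+(a-b)^2$, the $\epsilon^2\mathcal A$ term produces
\[
\tfrac{\epsilon^2}{2\Delta t}\Big(\interleave\cdot^n\interleave^2-\interleave\cdot^{n-1}\interleave^2+\interleave\Delta t\,\partial_t^+\cdot\interleave^2\Big),
\]
and the $\tfrac1{Pe}\mathcal A$ term gives $\tfrac1{Pe}\interleave(\bm p_h^n,\phi_h^n,\widehat\phi_h^n)\interleave^2$. The convex-concave splitting $F=(u^n)^3-u^{n-1}$ gives the standard pointwise inequality
\[
F(u^n,u^{n-1})(u^n-u^{n-1})\ge \tfrac14\big[((u^n)^2-1)^2-((u^{n-1})^2-1)^2\big],
\]
which handles the nonlinear term.

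The only new term is $\Delta t\,\mathcal B(u_h^{n-1},\widehat u_h^{n-1};\phi_h^n)$. By \eqref{B-es} (the $\bm r$-equation of \eqref{hdg01} gives the required hypothesis) and Young's inequality, it is bounded by
\[
\tfrac{\Delta t}{2Pe}\interleave(\bm p_h^n,\phi_h^n,\widehat\phi_h^n)\interleave^2+C\Delta t\big(\|u_h^{n-1}\|^2_{\mathcal T_h}+h^2\|h_\tau^{-1/2}(\Pi_k^\partial u_h^{n-1}-\widehat u_h^{n-1})\|^2_{\partial\mathcal T_h}\big).
\]
Since $\|u\|^2\le C(1+\|u^2-1\|^2)$ and the jump term is controlled by $\epsilon^{-2}\interleave(\bm q_h^{n-1},\ldots)\interleave^2$, both pieces are bounded by the energy at step $n-1$. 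Summing over $n$ and applying the discrete Gronwall inequality (explicit form, so no step-size restriction) yields \eqref{corEnergy_eq}, with $C$ depending on $T$, $\epsilon$ and $Pe$. The initial energy is bounded through \eqref{hdg03}, provided the initial traces are chosen as the HDG elliptic projection of $u^0$. I expect this Gronwall step with the explicit convection to be the main point requiring care, but it is routine.

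For \eqref{phi_infty-1}, I will test \eqref{hdg01} with $w_1=\partial_t^+u_h^n$ and \eqref{hdg02} differenced in time with $w_2=\phi_h^n$, or equivalently follow \cite{CHSZ2023}: take the difference of \eqref{hdg02} at $n$ and $n-1$ tested with $\phi_h^n$. This gives $\tfrac{1}{2}(\|\phi^n\|^2-\|\phi^{n-1}\|^2+\|\phi^n-\phi^{n-1}\|^2)$. The cubic term is controlled via
\[
\big((u^n)^3-(u^{n-1})^3,\,\cdot\big),
\]
using the $L^6$ bounds from \eqref{sobolev-01} together with \eqref{corEnergy_eq}. The convection term $\mathcal B(u_h^{n-1}-u_h^{n-2},\ldots)$ is handled by \eqref{B-es}, \eqref{corEnergy_eq} and Young's inequality, and a further Gronwall argument closes the estimate. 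This is where I expect the real obstacle: the differenced cubic term and the differenced explicit convection must be absorbed by $\epsilon^2\Delta t\|\partial_t^+u_h^n\|^2$. For this I would use \eqref{Cauchy} with mean-zero $\partial_t^+u_h^n$ (mass conservation follows from $w_1=1$ and $\mathcal B(\cdot;1)=0$) together with the $H^{-1}$-type bound from \eqref{hdg01}.

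For \eqref{uh_infty_bounded}, \eqref{hdg02} gives
\[
\epsilon^2\Delta_h u_h^n=\Pi_{k+1}^o\big((u_h^n)^3-u_h^{n-1}\big)-\phi_h^n,
\]
so $\|\Delta_hu_h^n\|\le C(\|u_h^n\|_{L^6}^3+\|u_h^{n-1}\|+\|\phi_h^n\|)\le C$ by \eqref{sobolev-01} and the previous bounds. Then \eqref{HDG-uniform} applied to $u_h^n$ minus its mean (whose mean is bounded) gives the $L^\infty$ bound. Finally, \eqref{u_Ih_infty_bound} follows from
\[
\|u_{Ih}\|_{L^\infty}\le\|u_{Ih}-\Pi_{k+1}^ou\|_{L^\infty}+\|\Pi_{k+1}^ou\|_{L^\infty},
\]
together with the inverse estimate \eqref{classical_ine_6}, \eqref{error-u-projection}, and $H^2\subset L^\infty$; the first term is $\le Ch^{-d/2}h^{2}\|u\|_2\le C$ for $d\le3$.
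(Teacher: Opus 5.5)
The gap is in your proof of \eqref{phi_infty-1}; your arguments for \eqref{corEnergy_eq}, \eqref{uh_infty_bounded} and \eqref{u_Ih_infty_bound} are sound and close to the paper's. (In the energy step you bound the convection term with \eqref{B-es} and $\|u_h^{n-1}\|_{\mathcal T_h}^2\le C(1+\|(u_h^{n-1})^2-1\|_{\mathcal T_h}^2)$. The paper instead subtracts the mean of $\phi_h^n$ and integrates by parts. Both close with the explicit discrete Gronwall inequality.)

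With your test functions, \eqref{hdg01} tested with $(-\partial_t^+\bm q_h^n,\partial_t^+u_h^n,\partial_t^+\widehat u_h^n)$ and the differenced \eqref{hdg02} tested with $(\bm p_h^n,-\phi_h^n,-\widehat\phi_h^n)$, the convection contribution is $\epsilon^2\Delta t\,\mathcal B(u_h^{n-1},\widehat u_h^{n-1};\partial_t^+u_h^n)$. No differenced term $\mathcal B(u_h^{n-1}-u_h^{n-2},\ldots)$ arises. Both tools you propose put the derivative on the wrong factor:
\begin{itemize}
\item Applying \eqref{B-es} with $w_h=\partial_t^+u_h^n$ bounds the term by $\interleave(\partial_t^+\bm q_h^n,\partial_t^+u_h^n,\partial_t^+\widehat u_h^n)\interleave$, a discrete $H^1$ quantity of $\partial_t^+u_h^n$. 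The left-hand side only controls $\epsilon^2\Delta t\sum_n\|\partial_t^+u_h^n\|^2_{\mathcal T_h}$. Moreover \eqref{corEnergy_eq} only gives $\sum_n\|\Delta t\,\partial_t^+\bm q_h^n\|^2_{\mathcal T_h}\le C$, i.e.\ $\Delta t\sum_n\|\partial_t^+\bm q_h^n\|^2_{\mathcal T_h}\le C/\Delta t$. So the estimate does not close uniformly in $\Delta t$.
\item Going through \eqref{Cauchy} and an $H^{-1}$ bound on $\partial_t^+u_h^n$ requires a discrete gradient of $\bm\beta\cdot\nabla u_h^{n-1}$, i.e.\ second derivatives of $u_h^{n-1}$. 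These are not available yet: the $\Delta_h$ bound in \eqref{uh_infty_bounded} comes later and itself uses \eqref{phi_infty-1}.
\item Handling the cubic term the same way fails for the same reason: you would need $\nabla\bigl(((u_h^n)^2+u_h^nu_h^{n-1}+(u_h^{n-1})^2)\phi_h^n\bigr)$, hence $L^\infty$ control of $u_h$.
\end{itemize}

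The missing idea is to keep all derivatives off $\partial_t^+u_h^n$. Integrate by parts elementwise using $\nabla\cdot\bm\beta=0$:
\[
\mathcal B(u_h^{n-1},\widehat u_h^{n-1};w_h)=(\bm\beta\cdot\nabla u_h^{n-1},w_h)_{\mathcal T_h}+\langle\bm\beta\cdot\bm n(\widehat u_h^{n-1}-u_h^{n-1}),w_h\rangle_{\partial\mathcal T_h}.
\]
Then \eqref{classical_ine_5} and \eqref{es_u} give $|\mathcal B(u_h^{n-1},\widehat u_h^{n-1};w_h)|\le C\interleave(\bm q_h^{n-1},u_h^{n-1},\widehat u_h^{n-1})\interleave\,\|w_h\|_{\mathcal T_h}$. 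With $w_h=\partial_t^+u_h^n$, this is absorbed into $\tfrac{\epsilon^2\Delta t}{4}\|\partial_t^+u_h^n\|^2_{\mathcal T_h}$, and the remainder is summable by \eqref{corEnergy_eq}.

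For the cubic term, write $\partial_t^+F(u_h^n,u_h^{n-1})=\partial_t^+u_h^n\bigl((u_h^n)^2+u_h^nu_h^{n-1}+(u_h^{n-1})^2\bigr)-\partial_t^+u_h^{n-1}$. Use H\"older ($L^2\cdot L^3\cdot L^6$), \eqref{sobolev-01}, and $\|\phi_h^n\|_{\mathcal T_h}\le C\interleave(\bm p_h^n,\phi_h^n,\widehat\phi_h^n)\interleave+C$. The mean of $\phi_h^n$ is bounded by testing \eqref{hdg02} with $w_2=\mu_2=1$. Young's inequality then bounds the cubic term by a small multiple of $\epsilon^2\Delta t\sum_n\|\partial_t^+u_h^n\|^2_{\mathcal T_h}$ plus $C\Delta t\sum_n\interleave(\bm p_h^n,\phi_h^n,\widehat\phi_h^n)\interleave^2+C$. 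This is controlled directly by \eqref{corEnergy_eq}, so no second Gronwall argument is needed.
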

	\begin{remark}
		The inequalities \eqref{corEnergy_eq} and \eqref{phi_infty-1} confirm that our scheme is unconditionally stable,  as it imposes no constraints on the time step $\Delta t$.  In contrast,  Kay et al. \cite{KAYDAVID2009DGFE} proposed a DG scheme with implicit discretization for the convection term,  which guarantees only conditional stability. Their scheme requires the time step $\Delta t$ to satisfy $
		\Delta t \le C_* \epsilon^2$,  see \cite[Lemma 3.1]{KAYDAVID2009DGFE}. 		
	\end{remark}

	\begin{proof}
		The proof follows from a sequence of energy estimates. First,  {choose $(\bm r_2,  w_2, \mu_2)=(\bm 0, \partial_{t}^+u_h^n, \partial_{t}^+ \widehat u_h^n)$ in \eqref{hdg02} and use the definition of $ \mathcal{A} $ in \eqref{def_A} to obtain
			\begin{align}
				&(F(u_h^n, u_h^{n-1}), \partial_{t}^+u_h^n)_{\mathcal{T}_h}+\epsilon^2(\nabla\cdot\bm{q}_h^n, \partial_{t}^+u_h^n)_{\mathcal{T}_h}
				-\epsilon^2\langle\bm{q}_h^n\cdot\bm{n}, \partial_{t}^+\widehat{u}_h^n\rangle_{\partial \mathcal{T}_h}\nonumber\\
				&+\epsilon^2\alpha\langle h_{\tau}^{-1}(\Pi_k^{\partial}u_h^n-\widehat{u}_h^n), \Pi_k^{\partial}\partial_{t}^+u_h^n-\partial_{t}^+\widehat{u}_h^n\rangle_{\partial \mathcal{T}_h}
				-(\phi_h^n, \partial_t^+u_h^n)_{\mathcal{T}_h}=0.\label{temp-1-01}
			\end{align}
			Then apply $ \partial_{t}^+ $ to \eqref{hdg02} to obtain
			\begin{align}\label{temp-1-02}
				(\partial_{t}^+F(u_h^n, u_h^{n-1}), w_2)_{\mathcal{T}_h}+\epsilon^2\mathcal{A}(\partial_{t}^+\bm{q}_h^n, \partial_{t}^+u_h^n, \partial_{t}^+\widehat{u}_h^n;\bm{r}_2, w_2, \mu_2)-(\partial_{t}^+\phi_h^n, w_2)_{\mathcal{T}_h}=0.
			\end{align}	
			Selecting $ (\bm{r}_2, w_2, \mu_2)=(\bm{q}_h^n, 0, 0) $ in \eqref{temp-1-02} and use the definition of $\mathcal{A}$ in \eqref{def_A},  one derives
			\begin{align}\label{temp-1-03}
				\epsilon^2(\partial_{t}^+\bm{q}_h^n, \bm{q}_h^n)_{\mathcal{T}_h}-\epsilon^2(\partial_{t}^+u_h^n, \nabla\cdot\bm{q}_h^n)_{\mathcal{T}_h}
				+\epsilon^2\langle\partial_{t}^+\widehat{u}_h^n, \bm{q}_h^n\cdot\bm{n}\rangle_{\partial \mathcal{T}_h}=0.
			\end{align}
			Add the \eqref{temp-1-01} and \eqref{temp-1-03} together,  one can obtain:
		}
		\begin{align*}
			&(F(u_h^n,  u_h^{n-1}), \partial_t^+ u_h^n)_{\mathcal{T}_h}+\epsilon^2(\partial_{t}^+ \bm q_h^n, \bm q_h^n)_{\mathcal{T}_h} +\langle \alpha\epsilon^2 h_{\tau}^{-1}(\Pi_k^{\partial}u_h^n-\widehat u_h^n), \partial_{t}^+ (\Pi_k^{\partial}u_h^n-\widehat u_h^n) \rangle_{\partial\mathcal T_h} \\
			&\qquad-(\phi_h^n, \partial_{t}^+ u_h^n)_{\mathcal{T}_h}=0.
		\end{align*}	%
		
		Similarly,  selecting  $(\bm r_1,  w_1,  \mu_1)=(\bm p_h^n, \phi_h^n, \widehat\phi_h^n)$ in \eqref{hdg01} leads to
		\begin{align*}
			(\partial_{t}^+ u_h^n, \phi_h^n)_{\mathcal{T}_h}+ \frac{1}{Pe}\interleave(\bm p_h^n,  \phi_h^n, \widehat{\phi}_h^n)\interleave^2
			+\mathcal{B}(u_h^{n-1},  \widehat{u}_h^{n-1}; \phi_h^n)=0\nonumber.
		\end{align*}
		Hence 	
		\begin{align}
			& (F(u_h^n,  u_h^{n-1}), \partial_{t}^+u_h^n)_{\mathcal{T}_h}+\epsilon^2(\partial_{t}^+ \bm q_h^n, \bm q_h^n)_{\mathcal{T}_h}
			+\alpha\langle\epsilon^2h_{\tau}^{-1}(\Pi_k^{\partial}u_h^n-\widehat u_h^n), \partial_{t}^+ (\Pi_k^{\partial}u_h^n-\widehat u_h^n) \rangle_{\partial\mathcal T_h } \nonumber\\
			&\qquad+\frac{1}{Pe}\interleave(\bm p_h^n,  \phi_h^n, \widehat{\phi}_h^n)\interleave^2+\mathcal{B}(u_h^{n-1},  \widehat{u}_h^{n-1}; \phi_h^n)=0. \label{sta-01}
		\end{align}
		{Let $ \gamma^n=\frac{1}{|\Omega|}\int_{\Omega}\phi_h^n{\rm d}x $,  then
			\begin{align*}
				(\phi_h^n-\gamma^n, 1)_{\mathcal{T}_h}=0,  \ \forall n=1, 2, \cdots,  N.
			\end{align*}
			Taking $(w_1, \mu_1)=(0, 0)$ in \eqref{hdg01} yields
			\[
			\mathcal{A}(\bm p_h^n, \phi_h^n, \widehat{\phi}_h^n;\bm r_h, 0, 0)=0
			\qquad \forall\, \bm r_h\in \bm V_h.
			\]
			Since $\gamma^n$ is a constant,  it follows that
			\[
			\mathcal{A}(\bm p_h^n, \phi_h^n-\gamma^n, \widehat{\phi}_h^n-\gamma^n;\bm r_h, 0, 0)=0
			\qquad \forall\, \bm r_h\in \bm V_h, 
			\]
			which is exactly the hypothesis needed to invoke \eqref{es_u} for the triple
			$(\bm p_h^n, \phi_h^n-\gamma^n, \widehat{\phi}_h^n-\gamma^n)$.

			Thus,  by Sobolev inequality \eqref{sobolev-02} and \eqref{es_u},  we obtain
			\begin{align*}
				\|\phi_h^n-\gamma^n\|_{\mathcal{T}_h}&\le C(\|\nabla(\phi_h^n-\gamma^n)\|_{\mathcal{T}_h}
				+\alpha^{1/2}\|h_{\tau}^{-1/2}(\Pi_k^{\partial}(\phi_h^n-\gamma^n)-(\widehat{\phi}_h^n-\gamma^n))\|_{\partial\mathcal{T}_h})\\
				&=C(\|\nabla\phi_h^n\|_{\mathcal{T}_h}
				+\alpha^{1/2}\|h_{\tau}^{-1/2}(\Pi_k^{\partial}\phi_h^n-\widehat{\phi}_h^n)\|_{\partial\mathcal{T}_h})\\
				&\le C\interleave(\bm{p}_h^n, \phi_h^n, \widehat{\phi}_h^n)\interleave.
			\end{align*}
			By the definition of $ \mathcal{B} $ and integration by parts,  noticing the fact that $ \langle\bm{\beta}\cdot\bm{n}\mu_h, 1\rangle_{\partial \mathcal{T}_h}=0,  \forall \mu_h\in M_h  $,   one derives
			\begin{align}
				\mathcal{B}(u_h^{n-1}, \widehat{u}_h^{n-1};\phi_h^n)&=
				-(\bm{\beta}u_h^{n-1}, \nabla\phi_h^n)_{\mathcal{T}_h}+\langle\bm{\beta}\cdot\bm{n}\widehat{u}_h^{n-1}, \phi_h^n\rangle_{\partial\mathcal{T}_h}\nonumber\\
				&=-(\bm{\beta}u_h^{n-1}, \nabla(\phi_h^n-\gamma^n))_{\mathcal{T}_h}+\langle\bm{\beta}\cdot\bm{n}\widehat{u}_h^{n-1}, \phi_h^n-\gamma^n\rangle_{\partial\mathcal{T}_h}\nonumber\\
				&=(\bm{\beta}\cdot\nabla u_h^{n-1}, \phi_h^n-\gamma^n)_{\mathcal{T}_h}
				-\langle\bm{\beta}\cdot\bm{n}(u_h^{n-1}-\widehat{u}_h^{n-1}), \phi_h^n-\gamma^n\rangle_{\partial\mathcal{T}_h}\nonumber\\
				&\le C\|\nabla u_h^{n-1}\|\|\phi_h^n-\gamma^n\|_{\mathcal{T}_h}
				+C\|h_{\tau}^{-1/2}(u_h^{n-1}-\widehat{u}_h^{n-1})\|_{\partial\mathcal{T}_h}\|\phi_h^n-\gamma^n\|_{\mathcal{T}_h}\\
				&\le C(\|\nabla u_h^{n-1}\|_{\mathcal{T}_h}+\|h_{\tau}^{-1/2}(u_h^{n-1}-\widehat{u}_h^{n-1})\|_{\partial\mathcal{T}_h})\interleave(\bm{p}_h^n, \phi_h^n, \widehat{\phi}_h^n)\interleave\nonumber\\
				&\le C\interleave(\bm{q}_h^{n-1}, u_h^{n-1}, \widehat{u}_h^{n-1})\interleave
				\interleave(\bm{p}_h^n, \phi_h^n, \widehat{\phi}_h^n)\interleave.\label{sta-02}
			\end{align}
			where the last inequality we use the inequality \eqref{es_u}.}
		Recall the elementary inequality  $(a^3-b)(a-b) \geq \frac{1}{4}[(a^2-1)^2-(b^2-1)^2]$ and the identity $(a-b)a=\frac{a^2-b^2+(a-b)^2}{2}$.  Apply these and insert \eqref{sta-02} into \eqref{sta-01},  one obtains
		\begin{align*}
			& \frac{1}{4}\|( u_h^n)^2-1\|^2_{\mathcal{T}_h}+ \frac{\epsilon^2}{2}\interleave(\bm q_h^n,  u_h^n, \widehat{u}_h^n)\interleave^2 +\frac{\Delta t}{Pe}\interleave(\bm p_h^n,  \phi_h^n, \widehat{\phi}_h^n)\interleave^2
			\\
			&\qquad+\frac{\epsilon^2}{2} \|\Delta t \partial_t^+  \bm q_h^n\|^2_{\mathcal{T}_h}+\frac{\alpha\epsilon^2}{2} \|\Delta t h_{\tau}^{-1/2}\partial_{t}^+ (\Pi_k^{\partial}u_h^n-\widehat u_h^n)\|^2_{\partial\mathcal{T}_h}\\
			&\quad\leq	\frac{1}{4}\|( u_h^{n-1})^2-1\|^2_{\mathcal{T}_h}
			+\frac{\epsilon^2}{2}\interleave(\bm q_h^{n-1},  u_h^{n-1}, \widehat{u}_h^{n-1})\interleave^2-\Delta t  \mathcal B(u_h^{n-1}, \widehat u_h^{n-1}; \phi_h^n) \\
			&\quad\leq \frac{1}{4}\|( u_h^{n-1})^2-1\|^2_{\mathcal{T}_h}+\frac{\epsilon^2}{2}\interleave(\bm q_h^{n-1},  u_h^{n-1}, \widehat{u}_h^{n-1})\interleave^2+\frac{\Delta t}{2Pe}\interleave(\bm{p}_h^n, \phi_h^n, \widehat{\phi}_h^n)\interleave^2\\
			&\qquad+ C \Delta t\interleave(\bm{q}_h^{n-1}, u_h^{n-1}, \widehat{u}_h^{n-1})\interleave^2, 
		\end{align*}
		where,  in the last step,  we have applied the estimate \eqref{B-es},  the HDG-Sobolev embedding \eqref{sobolev-02},  and the inequality \eqref{es_u}. Summing from $n=1$ to $n=m$ yields:
		\begin{align*}
			&\frac{1}{4}\|( u_h^m)^2-1\|^2_{\mathcal{T}_h}+ \frac{\epsilon^2}{2}\interleave(\bm q_h^m,  u_h^m, \widehat{u}_h^m)\interleave^2 +\sum_{n=1}^m \frac{\Delta t}{2 Pe}\interleave(\bm p_h^n,  \phi_h^n, \widehat{\phi}_h^n)\interleave^2 \\
			&\qquad+\frac{\epsilon^2}{2} \sum_{n=1}^m \Big(\|\Delta t \partial_t^+  \bm q_h^n\|^2_{\mathcal{T}_h}+ \alpha\|\Delta t h_{\tau}^{-1/2}\partial_{t}^+ (\Pi_k^{\partial}u_h^n-\widehat u_h^n)\|^2_{\partial\mathcal{T}_h}\Big)\\
			&\quad\leq C+ C\sum_{n=1}^{m}  \Delta t \interleave(\bm q_h^{n-1},  u_h^{n-1}, \widehat{u}_h^{n-1})\interleave^2.
		\end{align*}
		The inequality \eqref{corEnergy_eq} follows directly from Gronwall’s inequality. 
		
		Applying 
		$\partial_{t}^+$ to \eqref{hdg02} yields
		\begin{subequations}
			\begin{align}
				&\epsilon^2	\mathcal{A}( \partial_{t}^+ \bm q_h^n, \partial_{t}^+ u_h^n, \partial_{t}^+ \widehat{u}_h^n;\bm r_2, w_2, \mu_2)
				+(\partial_{t}^+F(u_h^n,  u_h^{n-1}), w_2)_{\mathcal{T}_h}\label{hdg002}-(\partial_{t}^+\phi_h^n, w_2)_{\mathcal{T}_h}=0.
			\end{align}
			\end{subequations}
		where setting  $(\bm{ r}_2, w_2, \mu_2)=(\bm {p}_h^n,  -\phi_h^n, -\widehat{\phi}_h^n)$ yields
		\begin{align}
			&	\epsilon^2{\mathcal{A}( \partial_{t}^+ \bm q_h^n, \partial_{t}^+ u_h^n, \partial_{t}^+ \widehat{u}_h^n;\bm  p_h^n,  -\phi_h^n, -\widehat{\phi}_h^n)}
			-(\partial_{t}^+F(u_h^n,  u_h^{n-1}),  \phi_h^n)_{\mathcal{T}_h}\label{eq_C2} -(\partial_{t}^+\phi_h^n, -\phi_h^n)_{\mathcal{T}_h}=0. 
		\end{align}
		Taking $(\bm r_1,  w_1,  \mu_1)=(-\partial_{t}^+ \bm q_h^n, \partial_{t}^+ u_h^n, \partial_{t}^+ \widehat u_h^n)$ in \eqref{hdg01},   we obtain
		\begin{align}
			&	(\partial_{t}^+  u_h^n, \partial_{t}^+ u_h^n)_{\mathcal{T}_h}+\frac{1}{Pe}{\mathcal A(\bm p_h^n,  \phi_h^n,   \widehat{\phi}_h^n;- \partial_{t}^+\bm q_h^n,  \partial_{t}^+ u_h^n,  \partial_{t}^+ \widehat{u}_h^n)}\label{eq_C1}+{\mathcal{B}(u_h^{n-1},  \widehat{u}_h^{n-1}; { \partial_{t}^+ u_h^n}})=0. 
		\end{align}
		Multiplying \Cref{eq_C2} by $\frac{\Delta t}{Pe}$ and \Cref{eq_C1} by $\Delta t\epsilon^2$,  {and then summing the two equations,  we use the symmetry identity for $\mathcal{A}$ in \eqref{nonsymmetric-A} to obtain}
		\begin{align}\label{add:0}
			&\frac{1}{Pe}(\phi_h^{n}-\phi_h^{n-1}, \phi_h^n)_{\mathcal{T}_h}+\Delta t\epsilon^2\|\partial_{t}^+u_h^n\|^2_{\mathcal{T}_h}+\epsilon^2 \Delta t\mathcal{B}(u_h^{n-1},  \widehat{u}_h^{n-1}; \partial_{t}^+ u_h^n)\\
			&\qquad-\frac{\Delta t}{Pe}(\partial_{t}^+F(u_h^n,  u_h^{n-1}),  \phi_h^n)_{\mathcal{T}_h}=0.\nonumber
		\end{align}
		By recalling the definition of $\mathcal{B}$ in \eqref{def_B} and applying integration by parts,  we obtain
		\begin{align}\label{B-es2}
			&\mathcal{B}(u_h^{n-1},  \widehat{u}_h^{n-1}; \partial_{t}^+ u_h^n)=
			\big(\bm \beta \cdot \nabla u_h^{n-1},  \partial_t^+ u_h^n\big)_{\mathcal{T}_h}
			+ \big\langle \bm \beta \cdot \bm n(\widehat{u}_h^{n-1}-u_h^{n-1}),  \partial_t^+u_h^n\big\rangle_{\partial\mathcal T_h }. 
		\end{align}
		Substituting \eqref{B-es2} into \eqref{add:0} and using the identity 
		\begin{align*}
			(a-b, a)=\frac{1}{2}(a^2-b^2+(a-b)^2), 
		\end{align*}
		then summing over $n$ from $1$ to $m$,  we obtain
		\begin{align}
			&\frac{1}{2Pe}\|\phi_h^{m}\|^2_{\mathcal{T}_h}+\Delta t\epsilon^2\sum_{n=1}^m \|\partial_{t}^+u_h^n\|^2_{\mathcal{T}_h} 
			+\frac{1}{2Pe}\sum_{n=1}^{m}\|\Delta t\partial_t^+\phi_h^n\|_{\mathcal{T}_h}^2
			\nonumber\\
			&\leq\frac{\Delta t}{Pe}\sum_{n=1}^m\Big(\partial_{t}^+F(u_h^n,  u_h^{n-1}),  \phi_h^n\Big)_{\mathcal{T}_h}
			+\frac{1}{2Pe}\|\phi_h^{0}\|^2_{\mathcal{T}_h}\nonumber \\
			&\quad -\Delta t \epsilon^2 \sum_{n=1}^m \Big[\big(\bm \beta \cdot \nabla u_h^{n-1},  \partial_t^+ u_h^n\big)_{\mathcal{T}_h}+\big\langle \bm \beta \cdot \bm n(\widehat{u}_h^{n-1}-u_h^{n-1}),  \partial_t^+u_h^n\big\rangle_{\partial\mathcal T_h } \Big] \nonumber 
		\end{align}
		{ Using the Cauchy--Schwarz inequality,  the discrete trace inequality \eqref{classical_ine_5},  and \eqref{es_u},  we estimate}
		\begin{align*}
			&\Delta t \epsilon^2 \sum_{n=1}^m \big|\big(\bm \beta \cdot \nabla u_h^{n-1},  \partial_t^+ u_h^n\big)_{\mathcal{T}_h}+\big\langle \bm \beta \cdot \bm n(\widehat{u}_h^{n-1}-u_h^{n-1}),  \partial_t^+u_h^n\big\rangle_{\partial\mathcal T_h }\big|\\
			&\qquad\le C\Delta t\epsilon^2\sum_{n=1}^{m}(\|\partial_{t}^+u_h^n\|_{\mathcal{T}_h}\|\nabla u_h^{n-1}\|_{\mathcal{T}_h}+\|h_{\tau}^{-1/2}(\widehat{u}_h^{n-1}-u_h^{n-1})\|_{\partial\mathcal{T}_h}\|\partial_t^+u_h^n\|_{\mathcal{T}_h})\\
			&\qquad\leq \frac{\Delta t \epsilon^2}{4} \sum_{n=1}^m\|\partial_{t}^+u_h^n\|^2_{\mathcal{T}_h} 
			+C\epsilon^2 \Delta t \sum_{n=1}^m\big( \|\nabla u_h^{n-1} \|_{\mathcal{T}_h}^2+ \|h_{\tau} ^{-1/2}(u_h^{n-1}-\widehat{u}_h^{n-1}) \|_{\partial \mathcal{T}_h}^2\big) \\
			&\qquad\leq \frac{\Delta t \epsilon^2}{4} \sum_{n=1}^m\|\partial_{t}^+u_h^n\|^2_{\mathcal{T}_h}+ C \epsilon^2\Delta t \sum_{n=1}^m\interleave(\bm q_h^{n-1},  u_h^{n-1},  \widehat{u}_h^{n-1})\interleave^2.
		\end{align*}
		For $n\ge 2$,  notice that
			\begin{align*}
				\partial_{t}^+F(u_h^n, u_h^{n-1})
				&=\frac{F(u_h^n, u_h^{n-1})-F(u_h^{n-1}, u_h^{n-2})}{\Delta t}
				=\frac{(u_h^n)^3-u_h^{n-1}-(u_h^{n-1})^3+u_h^{n-2}}{\Delta t}\\
				&=\frac{(u_h^n-u_h^{n-1})\bigl((u_h^n)^2+u_h^n u_h^{n-1}+(u_h^{n-1})^2\bigr)-(u_h^{n-1}-u_h^{n-2})}{\Delta t}\\
				&= \partial_{t}^+u_h^n\bigl((u_h^n)^2+u_h^n u_h^{n-1}+(u_h^{n-1})^2\bigr)-\partial_{t}^+u_h^{n-1}.
			\end{align*}
			In addition,  the term with $n=1$ can be estimated directly and absorbed into the generic constant.
			Using H\"older's inequality,  the HDG Sobolev inequalities \eqref{sobolev-01}--\eqref{sobolev-02}, 
			\eqref{es_u},  and the stability bound \eqref{corEnergy_eq},  we obtain
			\begin{align*}
				&\frac{\Delta t}{Pe}\sum_{n=1}^{m}(\partial_{t}^+F(u_h^n, u_h^{n-1}), \phi_h^n)_{\mathcal{T}_h
				}=	\frac{\Delta t}{Pe}\sum_{n=1}^{m}(\partial_{t}^+u_h^n[(u_h^n)^2+u_h^nu_h^{n-1}+(u_h^{n-1})^2]-\partial_{t}^+u_h^{n-1}, \phi_h^n)_{\mathcal{T}_h}\\
				&\le C\Delta t\sum_{n=1}^{m}\|\partial_{t}^+u_h^n\|_{\mathcal{T}_h}(\|(u_h^n)^2\|_{L^3(\Omega)}+\|(u_h^{n-1})^2\|_{L^3(\Omega)})\|\phi_h^n\|_{L^6(\Omega)} +C\Delta t\sum_{n=1}^{m}\|\partial_{t}^+u_h^{n-1}\|_{\mathcal{T}_h}\|\phi_{h}^n\|_{\mathcal{T}_h}\\
				&=C\Delta t\sum_{n=1}^{m}\|\partial_{t}^+u_h^n\|_{\mathcal{T}_h}(\|u_h^n\|_{L^6(\Omega)}^2+\|u_h^{n-1}\|_{L^6(\Omega)}^2)\|\phi_h^n\|_{L^6(\Omega)} +C\Delta t\sum_{n=1}^{m}\|\partial_{t}^+u_h^{n-1}\|_{\mathcal{T}_h}\|\phi_{h}^n\|_{\mathcal{T}_h}\\
				&\le C\Delta t\sum_{n=1}^{m}\|\partial_{t}^+u_h^n\|_{\mathcal{T}_h}(
				\|u_h^n\|_{\mathcal{T}_h}^2+
				\|\nabla u_h^n\|_{\mathcal{T}_h}^2+\alpha\|h_{\tau}^{-1/2}(\Pi_k^{\partial}u_h^n-\widehat{u}_h^n)\|_{\partial\mathcal{T}_h}^2
				+\| u_h^{n-1}\|_{\mathcal{T}_h}^2
				+\|\nabla u_h^{n-1}\|_{\mathcal{T}_h}^2\\
				&\quad
				+\alpha\|h_{\tau}^{-1/2}(\Pi_k^{\partial}u_h^{n-1}-\widehat{u}_h^{n-1})\|_{\partial\mathcal{T}_h}^2)
				(\|\phi_h^n\|_{\mathcal{T}_h}+\|\nabla\phi_h^n\|_{\mathcal{T}_h}+\alpha^{1/2}\|h_{\tau}^{-1/2}(
				\Pi_k^{\partial}\phi_h^n-\widehat{\phi}_{h}^n)\|_{\partial\mathcal{T}_h})\\
				&\quad +C\Delta t\sum_{n=1}^{m}\|\partial_{t}^+u_h^{n-1}\|_{\mathcal{T}_h}
				\|\phi_h^n\|_{\mathcal{T}_h}\\
				&\le C\Delta t\sum_{n=1}^{m}\|\partial_{t}^+u_h^n\|_{\mathcal{T}_h}
				(\|\phi_h^n\|_{\mathcal{T}_h}+\interleave(\bm{p}_h^n, \phi_h^n, \widehat{\phi}_h^n)\interleave)
				+C\Delta t\sum_{n=1}^{m}\|\partial_{t}^+u_h^{n-1}\|_{\mathcal{T}_h}\|\phi_h^n\|_{\mathcal{T}_h}
				\\
				&\le C+\frac{\epsilon^2\Delta t}{4} \sum_{n=1}^{m}\|\partial_{t}^+u_h^n\|_{\mathcal{T}_h}^2
				+C\Delta t\sum_{n=1}^{m}\interleave(\bm{p}_h^n, \phi_h^n, \widehat{\phi}_h^n)\interleave^2,  
			\end{align*}
			where we use the inequality
			\begin{align*}
				\|\phi_h^n\|_{\mathcal{T}_h}&\le \|\phi_h^n-\gamma^n\|_{\mathcal{T}_h}+\|\gamma^n\|_{\mathcal{T}_h}
				\le \interleave(\bm{p}_h^n, \phi_h^n, \widehat{\phi}_h^n)\interleave+C.
			\end{align*}		
			
		Thus,  invoking the stability estimate \eqref{corEnergy_eq},  we conclude 
		\begin{align*}
			\frac{1}{2Pe}\|\phi_h^{m}\|^2_{\mathcal{T}_h}+\frac{\Delta t\epsilon^2}{2}\sum_{n=1}^m \|\partial_{t}^+u_h^n\|^2_{\mathcal{T}_h} +\frac{1}{2Pe}\sum_{n=1}^{m}\|\Delta t\partial_{t}^+\phi_h^n\|_{\mathcal{T}_h}^2 \leq C.
		\end{align*}
		{ We note that the  generic constant $C$ is independent of $h$ and $\Delta t$, but may depend on $\epsilon$ and ${Pe}$.} This establishes the estimate \eqref{phi_infty-1}.

		The remaining proofs for \eqref{uh_infty_bounded} and the uniform estimate of the HDG elliptic projection follow similarly,  using the triangle inequality,  inverse inequality,  and stability results to bound $u_{Ih}$ in $L^\infty$. We provide the details below.
		
		Recalling the definition of the discrete Laplacian in \eqref{Delta_h},  we choose $(\bm{r}_2, w_2, \mu_2)=(\bm{0}, \Delta_hu_h^n, 0)$ in \eqref{hdg02} to obtain 
		\begin{align*}
			(F(u_h^n, u_h^{n-1}), \Delta_hu_h^n)_{\mathcal{T}_h}-\epsilon^2\|\Delta_hu_h^n\|_{\mathcal{T}_h}^2-(\phi_h^n, \Delta_hu_h^n)_{\mathcal{T}_h}=0.
		\end{align*}
		Applying the Cauchy–Schwarz inequality and the HDG Sobolev inequality from \cref{discrete-soblev-hdg},  we derive
		\begin{align*}
			\epsilon^2\|\Delta_hu_h^n\|_{\mathcal{T}_h}^2& =(F(u_h^n, u_h^{n-1}), \Delta _hu_h^n)_{\mathcal{T}_h}-(\phi_h^n, \Delta_hu_h^n)_{\mathcal{T}_h}\\
			&\le (\|u_h^n\|_{L^6(\Omega)}^3+\|u_h^{n-1}\|_{\mathcal{T}_h})\|\Delta _hu_h^n\|_{\mathcal{T}_h}+
			\|\phi_h^n\|_{\mathcal{T}_h}\|\Delta_hu_h^n\|_{\mathcal{T}_h}\\
			&\le C\|\Delta_hu_h^n\|_{\mathcal{T}_h}(\interleave(\bm{q}_h^{n}, u_h^{n}, \widehat{u}_h^{n})\interleave^3+\interleave(\bm{q}_h^{n-1}, u_h^{n-1}, \widehat{u}_h^{n-1})\interleave+\|\phi_h^n\|_{\mathcal{T}_h}).
		\end{align*}
		Applying \cref{lap_u_h2} and \eqref{corEnergy_eq},  we complete the proof of the first term in \eqref{uh_infty_bounded}.
		
		Next,  we establish the uniform bound for the HDG elliptic projection $ u_{Ih} $. By the triangle and inverse inequalities,  we obtain
		\begin{align*}
			\|u_{Ih}^n-u^n\|_{L^{\infty}(\Omega)}&\le\|u_{Ih}^n-\Pi_{k+1}^ou^n\|_{L^{\infty}(\Omega)}+\|u^n-\Pi_{k+1}^ou^n\|_{L^{\infty}(\Omega)}\\
			& \le Ch^{-d/2}\|u_{Ih}^n-\Pi_{k+1}^ou^n\|_{\mathcal{T}_h}+Ch^{2-d/2}|u^n|_2\\
			& \le Ch^{2-d/2}|u^n|_2
		\end{align*}
		for all $ n=1, 2, \cdots,  N $. Hence with $ \|u\|_{L^{\infty}(H^2(\Omega))}\le C $,  one obtains uniform bound for $ u_{Ih} $.
	\end{proof}

	\section{Error analysis}
	\label{error_analysis}
	The main convergence result is summarized in the following theorem.
	\begin{theorem}\label{main_res}
		Suppose the solution $(\bm p,  \phi,  \bm q,  u)$ attains the maximum regularity required for the best approximation results in \eqref{classical_ine}. Then,  for any $h,  \Delta t$, 
		the following optimal error estimates hold for polynomials of degree $k\geq 0$:
		\begin{align*}
			\max_{1\le n\le N} \|u^n-u_h^n\|_{L^2(\Omega)}^2+\Delta t \sum_{n=1}^N \|\phi^n -\phi_h^n\|_{L^2(\Omega)}^2  \le  C (h^{k+2}+\Delta t)^2,  \\
			\max_{1\le n\le N} \|\bm q^n -\bm q_h^n\|_{L^2(\Omega)}^2+\Delta t \sum_{n=1}^N \|\bm p^n -\bm p_h^n\|_{L^2(\Omega)}^2  \le  C (h^{k+1}+\Delta t)^2.
		\end{align*}
	\end{theorem}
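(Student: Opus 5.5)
We split the error through the HDG elliptic projection of \Cref{section 2.1}. Write
\[
e_u^n=u_{Ih}^n-u_h^n,\quad e_\phi^n=\phi_{Ih}^n-\phi_h^n,\quad \bm e_q^n=\bm q_{Ih}^n-\bm q_h^n,\quad \bm e_p^n=\bm p_{Ih}^n-\bm p_h^n,
\]
with hatted traces defined analogously. The projection errors $u-u_{Ih}$ and so on are controlled by \Cref{HDG-elliptic-projection-convergence results}, so it remains to bound the discrete errors. Subtracting the scheme \eqref{HDG formula} from the projection equations \eqref{HDG-projection} at $t_n$, and using the PDE $u_t=\frac1{Pe}\Delta\phi-\nabla\cdot(\bm\beta u)$ and $\phi=-\epsilon^2\Delta u+f(u)$, gives the error equations
\begin{align*}
&(\partial_t^+e_u^n,w_1)_{\mathcal T_h}+\tfrac1{Pe}\mathcal A(\bm e_p^n,e_\phi^n,\widehat e_\phi^n;\bm r_1,w_1,\mu_1)+\mathcal B(e_u^{n-1},\widehat e_u^{n-1};w_1)=(\rho_1^n,w_1)_{\mathcal T_h},\\
&(F(u_{Ih}^n,u_{Ih}^{n-1})-F(u_h^n,u_h^{n-1}),w_2)_{\mathcal T_h}+\epsilon^2\mathcal A(\bm e_q^n,e_u^n,\widehat e_u^n;\bm r_2,w_2,\mu_2)-(e_\phi^n,w_2)_{\mathcal T_h}=(\rho_2^n,w_2)_{\mathcal T_h}.
\end{align*}
The consistency residuals are
\[
\rho_1^n=\partial_t^+u_{Ih}^n-u_t^n+\mathcal B\text{-lag terms},\qquad \rho_2^n=F(u_{Ih}^n,u_{Ih}^{n-1})-f(u^n)+(\phi_{Ih}^n-\phi^n).
\]
The $\mathcal B$-lag terms are $\mathcal B(u_{Ih}^{n-1}-u_{Ih}^n,\cdot)$, of size $O(\Delta t)$. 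Using \eqref{error-u-projection}, \eqref{error-ut-projection}, \eqref{error-phi-projection}, Taylor expansion in time, and the $L^\infty$ bound \eqref{u_Ih_infty_bound}, every residual is $O(h^{k+2}+\Delta t)$ in the $L^2$ norm. For the lagged convection we use \eqref{B-es}. Because $e_u^n$ has zero mean (from \eqref{hdg03} and the projection's mean constraint, together with mass conservation, which holds since $\mathcal B(\cdot;1)=0$), the negative-norm tools are available.

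For the energy argument we test the first equation with $(\bm r_1,w_1,\mu_1)=(\bm e_p^n,e_\phi^n,\widehat e_\phi^n)$. We test the second with $w_2=\partial_t^+e_u^n$, which requires also taking $\partial_t^+$ of the second equation with $\bm r_2=\bm e_q^n$, exactly as in the proof of \Cref{corEnergy}. Adding the results cancels the $(e_\phi,\partial_t^+e_u)$ coupling and yields
\[
\tfrac{\epsilon^2}{2}\partial_t^+\interleave(\bm e_q^n,e_u^n,\widehat e_u^n)\interleave^2+\tfrac1{Pe}\interleave(\bm e_p^n,e_\phi^n,\widehat e_\phi^n)\interleave^2
\]
plus nonlinear and residual terms. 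This route is awkward, however, because it produces $\partial_t^+$ of the residuals. We prefer an $H^{-1}$-type test instead. Testing the first equation with $w_1=\Pi_W e_u^n$ (from \eqref{def_-1_h}) and the second with $w_2=e_u^n$, and using the symmetry identity \eqref{nonsymmetric-A}, gives
\[
\tfrac12\partial_t^+\|e_u^n\|_{-1,h}^2+\tfrac{\epsilon^2}{Pe}\interleave(\bm e_q^n,e_u^n,\widehat e_u^n)\interleave^2\le\cdots.
\]
We will then combine this with the test $w_1=e_u^n$ and $w_2=e_\phi^n$ (and $w_2=-\frac1{Pe}e_\phi^n$-type choices) to obtain $\frac12\partial_t^+\|e_u^n\|^2_{\mathcal T_h}+\frac{\epsilon^2}{Pe}\|\Delta_h$-like quantities. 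The plan is to mimic the standard mixed-FEM $L^2$ argument for Cahn--Hilliard: test the first equation with $e_u^n$, test the second with $-\frac1{Pe}$ times the $\phi$-type dual, and use \eqref{nonsymmetric-A} to convert $\frac1{Pe}\mathcal A(\bm e_p,e_\phi,\widehat e_\phi;\cdot,e_u,\cdot)$ into $\frac{\epsilon^2}{Pe}\interleave(\bm e_q^n,e_u^n,\widehat e_u^n)\interleave^2$ plus $\frac1{Pe}(e_\phi^n,\cdot)$ terms. This gives $\frac12\partial_t^+\|e_u^n\|^2+\frac{\epsilon^2}{Pe}\interleave(\bm e_q^n,e_u^n,\widehat e_u^n)\interleave^2\le$ the nonlinear term, the convection term, and the residuals.

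The nonlinear term $((u_{Ih}^n)^3-(u_h^n)^3,\cdot)$ is handled using the uniform $L^\infty$ bounds \eqref{uh_infty_bounded} and \eqref{u_Ih_infty_bound}, which make it Lipschitz, so it is bounded by $C\|e_u^n\|\,\interleave\cdot\interleave$. The convection term $\mathcal B(e_u^{n-1},\widehat e_u^{n-1};e_u^n)$ is bounded via \eqref{B-bound-ori} and \eqref{es_u} by
\[
C\bigl(\|e_u^{n-1}\|+h\interleave(\bm e_q^{n-1},e_u^{n-1},\widehat e_u^{n-1})\interleave\bigr)\interleave(\bm e_q^n,e_u^n,\widehat e_u^n)\interleave.
\]
After Young's inequality, the $\interleave\cdot\interleave^2$ contributions at $n$ and $n-1$ are absorbed into the left-hand sum, with a constant dependent on $Pe$ and $\epsilon$; any $h^2$ factor only helps. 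Discrete Gronwall then gives $\max_n\|e_u^n\|^2+\Delta t\sum_n\interleave(\bm e_q^n,e_u^n,\widehat e_u^n)\interleave^2\le C(h^{k+2}+\Delta t)^2$, with no constraints on $h$ or $\Delta t$. The bound for $e_\phi$ follows by testing the second equation with $w_2=e_\phi^n$ and using the $\Delta t$-weighted estimate. The flux estimates follow from the $\partial_t^+$-energy test above, now with known control of $e_u$. This yields $\max_n\interleave(\bm e_q^n,e_u^n,\widehat e_u^n)\interleave^2+\Delta t\sum_n\interleave(\bm e_p^n,e_\phi^n,\widehat e_\phi^n)\interleave^2\le C(h^{k+2}+\Delta t)^2$, and the triangle inequality with \eqref{error-q-projection} and \eqref{error-p-projection} gives the $h^{k+1}$ rates.

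The main obstacle is the superconvergent $L^2$ bound $h^{k+2}$ for $u$ in the presence of the explicit convection. The residual of $\mathcal B$ must be genuinely $O(h^{k+2})$, and this holds precisely because the projection \eqref{HDG-projection--phi--u} was built with $\mathcal B$ inside, so the only leftover is the time lag $\mathcal B(u_{Ih}^{n-1}-u_{Ih}^n;\cdot)=O(\Delta t)$. In addition, the error term $\mathcal B(e_u^{n-1};e_u^n)$ must be controlled without losing powers of $h$. I would first attempt the $H^{-1}$/$L^2$ pairing above and fall back on \eqref{Cauchy} and \eqref{lem-nega} if the direct $L^2$ test fails to close.
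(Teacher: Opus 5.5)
\paragraph{Gap in the $L^2$ step.} Your overall architecture matches the paper: errors measured against the HDG elliptic projection, zero mean of $e_u^n$, the $H^{-1}$ test with $\Pi_W e_u^n$, and $L^\infty$ bounds for the cubic term. The central $L^2$ step, however, rests on an identity that does not hold. Test the first error equation with $(-\bm e_q^n,e_u^n,\widehat e_u^n)$ and the second with $(\bm e_p^n,-e_\phi^n,-\widehat e_\phi^n)$. Then \eqref{nonsymmetric-A} turns $\frac1{Pe}\mathcal A(\bm e_p^n,e_\phi^n,\widehat e_\phi^n;-\bm e_q^n,e_u^n,\widehat e_u^n)$ into $\frac{1}{\epsilon^2 Pe}\|e_\phi^n\|^2$ plus nonlinear and residual terms. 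The $\mathcal A$-terms cancel, and no $\interleave(\bm e_q^n,e_u^n,\widehat e_u^n)\interleave^2$ appears; that quantity is the dissipation of the $H^{-1}$ test only. Any bound of $\mathcal B(e_u^{n-1},\widehat e_u^{n-1};e_u^n)$ involves $\interleave\cdot\interleave$ at step $n$ or $n-1$, since $\mathcal B$ is first order. So in your $L^2$ step there is nothing to absorb this term into, and the Gronwall argument for $\max_n\|e_u^n\|$ does not close as written.

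\paragraph{The missing idea: order of the argument.} The $H^{-1}$ estimate you set up closes on its own and must be finished first, which is what the paper does. There the convection term is tested against $\Pi_W e_u^n$, so by \eqref{B-error-bounded-2} it is bounded by $C\bigl(h^{k+2}+\Delta t\|\partial_t^+\Pi_{k+1}^o u^n\|+\interleave(\bm e_q^{n-1},e_u^{n-1},\widehat e_u^{n-1})\interleave\bigr)\|e_u^n\|_{-1,h}$. The step-$(n-1)$ dissipation is absorbed, $(e_u^n,e_u^n)$ is handled by \eqref{Cauchy}, and the cubic part is monotone. This yields $\Delta t\sum_n\interleave(\bm e_q^n,e_u^n,\widehat e_u^n)\interleave^2\le C(h^{k+2}+\Delta t)^2$ (\Cref{lemma-negative-norm}). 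Only then does the $L^2$ test close, with the convection products controlled by that bound. Alternatively, inside the $L^2$ step you could use the second error equation tested with $(\bm e_q^n,e_u^n,\widehat e_u^n)$, which gives $\epsilon^2\interleave(\bm e_q^n,e_u^n,\widehat e_u^n)\interleave^2\le(\|e_\phi^n\|+\|e_u^{n-1}\|+\|\rho_2^n\|)\|e_u^n\|$.

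\paragraph{The $\phi$ and flux steps.} The correct $L^2$ test already puts $\Delta t\sum_n\|e_\phi^n\|^2$ on the left. Your separate test with $w_2=e_\phi^n$ would instead produce $\mathcal A(\bm e_q^n,e_u^n,\widehat e_u^n;\cdot,e_\phi^n,\cdot)$, essentially $(\bm e_p^n,\bm e_q^n)$ plus stabilization. That needs the flux error and gives a suboptimal rate for $\phi$. Finally, your claim that the discrete flux errors are $O(h^{k+2}+\Delta t)$ is unsupported. In the $\partial_t^+$-energy test, the term $(\phi^n-\phi_{Ih}^n,\partial_t^+e_u^n)$ must be rerouted through the first error equation, which costs $\|\nabla(\Pi_{k+1}^o\phi^n-\phi_{Ih}^n)\|=O(h^{k+1})$. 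That is enough for the theorem, but it is not $h^{k+2}$.
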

	\begin{remark}\label{proof_main_res}
		The proof of \Cref{main_res} is presented in \Cref{scalar-variables-error-result,lem-err-flux}. In \Cref{scalar-variables-error-resultremark},  we explain why,  if the operator $\mathcal{B}$ is chosen in the form given by \eqref{def_B_1},  the scalar variables $u$ and $\phi$ do not achieve optimal convergence rates when $k = 0$,  whereas optimal rates are retained for $k \ge 1$. More specifically,  the error estimate takes the following form:
		\begin{align*}
			\max_{1\le n\le N} \|u^n-u_h^n\|_{L^2(\Omega)}^2+\Delta t \sum_{n=1}^N \|\phi^n -\phi_h^n\|_{L^2(\Omega)}^2  \le  C (h^{k+1+\min\{k, 1\}}+\Delta t)^2.
		\end{align*}
	\end{remark}

	We introduce the following notations for the errors:
	\begin{subequations}
		\begin{align}
			e_h^{\bm p^n}:=\bm p_{Ih}^n-\bm p_h^n, \ \ \ e_h^{\phi^n}:=\phi_{Ih}^n-\phi_h^n, \ \ \ e_h^{\widehat \phi^n}:=\widehat \phi_{Ih}^n-\widehat \phi_{h}^n, \label{def_e1}\\
			e_h^{\bm q^n}:=\bm q_{Ih}^n-\bm q_h^n, \ \ \ e_h^{u^n}:=u_{Ih}^n-u_h^n, \ \ \ e_h^{\widehat u^n}:=\widehat u_{Ih}^n-\widehat u_{h}^n.\label{def_e2}
		\end{align}
	\end{subequations}
	
	Using  the definition of $\mathcal A$ in \eqref{def_A} and the HDG elliptic projection \eqref{HDG-projection},  we derive the following error equations for all $(\bm r_1,  w_1,  \mu_1)$,  $(\bm r_2,  w_2,  \mu_2)$ in $\bm V_h\times W_h\times M_h$:
	\begin{subequations}
		\begin{align}
			& (\partial_{t}^+ e_h^{u^n}, w_1)_{\mathcal{T}_h}+\frac{1}{Pe}
			\mathcal A(e_h^{\bm p^n}, e_h^{\phi^n}, e_h^{\widehat \phi^n};\bm r_1,  w_1,  \mu_1)
			+\mathcal{B}(u_{Ih}^n, \widehat{u}_{Ih}^n;w_1)
			\nonumber\\
			&\qquad-\mathcal{B}(u_h^{n-1}, \widehat{u}_h^{n-1};w_1)=(\partial_{t}^+ u_{I h}^n-\partial_t u^n, w_1)_{\mathcal{T}_h}\label{error_01}
			\\
			&{\epsilon^2\mathcal{A}(e_h^{\bm q^n}, e_h^{u^n}, e_h^{\widehat u^n};\bm r_2,  w_2,  \mu_2)-(e_h^{\phi^n},  w_2)_{\mathcal{T}_h}}=(\phi^n-\phi^n_{Ih}, w_2)_{\mathcal{T}_h}\label{error_02}\\
			&\qquad+(F(u_h^n,  u_h^{n-1})-f(u^n), w_2)_{\mathcal{T}_h}. \nonumber 
		\end{align}
	\end{subequations}
	\begin{lemma}\label{error-term-B}
		suppose $ u_h^{n}, \widehat{u}_h^{n} $ to be the solutions of HDG scheme and $ u_{Ih}^{n},  \widehat{u}_{Ih}^{n} $ to be the HDG elliptic projection,  respectively. For any $ (w_h, \mu_h)\in W_h\times M_h $,  the following bound holds:
		\begin{align}\label{B-error-bounded}
			\begin{split}
				&|\mathcal{B}(u_{Ih}^n, \widehat{u}_{Ih}^n;w_h)-\mathcal{B}(u_h^{n-1}, \widehat{u}_h^{n-1};w_h)| \leq C(h^{k+2}+\|\Delta t\partial_{t}^+\Pi_{k+1}^ou^n\|_{\mathcal{T}_h}\\
				&\quad+\|e_h^{u^{n-1}}\|_{\mathcal{T}_h}+h\|h_{\tau}^{-1/2}(\Pi_k^{\partial}e_h^{u^{n-1}}-e_h^{\widehat{u}^{n-1}})\|_{\partial\mathcal{T}_h}) (\|\nabla w_h\|_{\mathcal{T}_h}+\|h_{\tau}^{-1/2}(w_h-\mu_h)\|_{\partial\mathcal{T}_h}).
			\end{split}		
		\end{align}
		Moreover,  if $	\mathcal A(\bm p_h, w_h, \mu_h; \bm r_h, 0, 0)=0 $ for all $\bm r_h\in \bm V_h$,  it holds
			\begin{align}\label{B-error-bounded-2}
				\begin{split}
					&|\mathcal{B}(u_{Ih}^n, \widehat{u}_{Ih}^n;w_h)-\mathcal{B}(u_h^{n-1}, \widehat{u}_h^{n-1};w_h)| \leq C(h^{k+2}+\|\Delta t\partial_{t}^+\Pi_{k+1}^ou^n\|_{\mathcal{T}_h}\\
					&\quad+\|e_h^{u^{n-1}}\|_{\mathcal{T}_h}+h\|h_{\tau}^{-1/2}(\Pi_k^{\partial}e_h^{u^{n-1}}-e_h^{\widehat{u}^{n-1}})\|_{\partial\mathcal{T}_h}) \interleave(\bm{p}_h, w_h, \mu_h)\interleave.	
				\end{split}			
			\end{align}
		
	\end{lemma}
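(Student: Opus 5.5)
The plan is to exploit the linearity of $\mathcal B$ in its first two arguments and split the difference into three pieces:
\begin{align*}
\mathcal{B}(u_{Ih}^n,\widehat u_{Ih}^n;w_h)-\mathcal{B}(u_h^{n-1},\widehat u_h^{n-1};w_h)
&=\mathcal{B}(u_{Ih}^n-u_{Ih}^{n-1},\widehat u_{Ih}^n-\widehat u_{Ih}^{n-1};w_h)\\
&\quad+\mathcal{B}(e_h^{u^{n-1}},e_h^{\widehat u^{n-1}};w_h) =: I_1+I_2 .
\end{align*}
The term $I_2$ is handled directly by \eqref{B-bound-ori} with $(u_h,\widehat u_h)=(e_h^{u^{n-1}},e_h^{\widehat u^{n-1}})$. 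This yields exactly the contribution $\|e_h^{u^{n-1}}\|_{\mathcal{T}_h}+h\|h_{\tau}^{-1/2}(\Pi_k^{\partial}e_h^{u^{n-1}}-e_h^{\widehat{u}^{n-1}})\|_{\partial\mathcal{T}_h}$ times $\|\nabla w_h\|_{\mathcal{T}_h}+\|h_{\tau}^{-1/2}(w_h-\mu_h)\|_{\partial\mathcal{T}_h}$.

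For $I_1$ I again apply \eqref{B-bound-ori}, now with the time increment of the projection. This reduces the task to bounding
\[
\|u_{Ih}^n-u_{Ih}^{n-1}\|_{\mathcal{T}_h}+h\|h_{\tau}^{-1/2}(\Pi_k^\partial(u_{Ih}^n-u_{Ih}^{n-1})-(\widehat u_{Ih}^n-\widehat u_{Ih}^{n-1}))\|_{\partial\mathcal{T}_h}.
\]
\begin{itemize}
\item For the first norm, I insert $\Pi_{k+1}^o$:
\[
u_{Ih}^n-u_{Ih}^{n-1}=\bigl(u_{Ih}^n-\Pi_{k+1}^ou^n\bigr)-\bigl(u_{Ih}^{n-1}-\Pi_{k+1}^ou^{n-1}\bigr)+\Delta t\,\partial_t^+\Pi_{k+1}^ou^n .
\]
By \eqref{error-u-projection} and \eqref{classical_ine_1}, the two bracketed terms are each $O(h^{k+2})$, using the regularity of $u$ uniformly in time. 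This gives $Ch^{k+2}+\|\Delta t\partial_t^+\Pi_{k+1}^ou^n\|_{\mathcal{T}_h}$.
\item For the boundary jump term, the factor $h$ together with \eqref{error-q-projection} applied at $t_n$ and $t_{n-1}$ gives $h\cdot Ch^{k+1}|u|_{k+2}\le Ch^{k+2}$, using the triangle inequality.
\end{itemize}
Combining $I_1$ and $I_2$ gives \eqref{B-error-bounded}.

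For \eqref{B-error-bounded-2}, I use the hypothesis $\mathcal A(\bm p_h,w_h,\mu_h;\bm r_h,0,0)=0$ for all $\bm r_h\in\bm V_h$. Then \eqref{es_u} gives
\[
\|\nabla w_h\|_{\mathcal{T}_h}+\|h_{\tau}^{-1/2}(w_h-\mu_h)\|_{\partial\mathcal{T}_h}\le C\interleave(\bm p_h,w_h,\mu_h)\interleave,
\]
and substituting this into \eqref{B-error-bounded} finishes the proof, exactly as in the derivation of \eqref{B-es}.

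The main obstacle is the $I_1$ step. There one must avoid the crude bound $\|u_{Ih}^n-u_{Ih}^{n-1}\|\le C\Delta t$, which would lose the split structure. One must also make sure the jump term of the projection increment is controlled by the factor $h$ alone, and not by any time-derivative estimate of the flux part of the projection. The key points that make this work are that \eqref{B-bound-ori} carries $h$ in front of the jump and that \eqref{error-q-projection} holds at each time level.
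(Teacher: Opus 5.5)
Your proof is correct and follows essentially the same route as the paper: linearity of $\mathcal B$, the bound \eqref{B-bound-ori}, the projection estimates \eqref{error-u-projection}--\eqref{error-q-projection} together with \eqref{classical_ine_1}, and \eqref{es_u} for the second inequality. The only difference is bookkeeping. The paper splits the pair into four pieces, $(-\varepsilon_h^{u^n},-\varepsilon_h^{\widehat u^n})$, $(\Delta t\partial_t^+\Pi_{k+1}^o u^n,\Delta t\partial_t^+\Pi_k^\partial u^n)$, $(\varepsilon_h^{u^{n-1}},\varepsilon_h^{\widehat u^{n-1}})$ and $(e_h^{u^{n-1}},e_h^{\widehat u^{n-1}})$, and applies \eqref{B-bound-ori} to each; you instead keep the projection increment as a single pair, so its trace jump is controlled directly by \eqref{error-q-projection} at $t_n$ and $t_{n-1}$, without invoking the boundary approximation of $\Pi_{k+1}^o$.
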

	\begin{proof}
		We recall the error notations defined in \eqref{error-notation-varepsilon}:
		\begin{align*}
			\varepsilon_h^{u^n}=\Pi_{k+1}^ou^n-u_{Ih}^n, \ 
			\varepsilon_h^{\widehat{u}^n}=\Pi_k^{\partial}u^n-\widehat{u}_{Ih}^n, 
		\end{align*}
		hence together with
		\begin{align*}
			u_{Ih}^n-u_h^{n-1}&=-\varepsilon_h^{u^n}+\Delta t\partial_t^+\Pi_{k+1}^ou^n+\varepsilon_h^{u^{n-1}}+e_h^{u^{n-1}}, \\
			\widehat{u}_{Ih}^n-\widehat{u}_h^{n-1}&=-\varepsilon_h^{\widehat{u}^n}+\Delta t\partial_t^+\Pi_{k}^{\partial}u^n+\varepsilon_h^{\widehat{u}^{n-1}}+e_h^{\widehat{u}^{n-1}}, 
		\end{align*}
		we can rewrite the error term as:
		\begin{align*}
			&\mathcal{B}(u_{Ih}^n, \widehat{u}_{Ih}^n;w_h)-\mathcal{B}(u_h^{n-1}, \widehat{u}_h^{n-1};w_h)\\
			&=\mathcal{B}(-\varepsilon_h^{u^n}, -\varepsilon_h^{\widehat{u}^n};w_h)
			+\mathcal{B}(\Delta t\partial_{t}^+\Pi_{k+1}^ou^n, \Delta t\partial_{t}^+\Pi_k^{\partial}u^n;w_h)\\
			&\quad +\mathcal{B}(\varepsilon_h^{u^{n-1}}, \varepsilon_h^{\widehat{u}^{n-1}};w_h)
			+\mathcal{B}(e_h^{u^{n-1}}, e_h^{\widehat{u}^{n-1}};w_h)\\
			&=\sum_{i=1}^{4}R_i.
		\end{align*}
		Next,  we perform the error estimate for the right hand terms $ \{R_i\}_{i=1}^4$.  By \eqref{B-bound-ori} and approximation of HDG elliptic projection,  one obtains
		\begin{align*}
			R_1+R_3&\le C(\|\varepsilon_h^{u^n}\|_{\mathcal{T}_h}+h\|h_{\tau}^{-1/2}(\Pi_k^{\partial}\varepsilon_h^{u^n}-\varepsilon_h^{\widehat{u}^n})\|_{\partial\mathcal{T}_h})
			(\|\nabla w_h\|_{\mathcal{T}_h}+\|h_{\tau}^{-1/2}(w_h-\mu_h)\|_{\partial\mathcal{T}_h})
			\\
			& +C(\|\varepsilon_h^{u^{n-1}}\|_{\mathcal{T}_h}+h\|h_{\tau}^{-1/2}(\Pi_k^{\partial}\varepsilon_h^{u^{n-1}}-\varepsilon_h^{\widehat{u}^{n-1}})\|_{\partial\mathcal{T}_h})(\|\nabla w_h\|_{\mathcal{T}_h}+\|h_{\tau}^{-1/2}(w_h-\mu_h)\|_{\partial\mathcal{T}_h})\\
			&\le Ch^{k+2}(|u^n|_{k+2}+|u^{n-1}|_{k+2})(\|\nabla w_h\|_{\mathcal{T}_h}+\|h_{\tau}^{-1/2}(w_h-\mu_h)\|_{\partial\mathcal{T}_h}), \\
			R_4&\le C(\|e_h^{u^{n-1}}\|_{\mathcal T_h}+h\|h_{\tau}^{-1/2}(\Pi_k^{\partial}e_h^{u^{n-1}}-e_h^{\widehat{u}^{n-1}})\|_{\partial\mathcal{T}_h})(\|\nabla w_h\|_{\mathcal{T}_h}+\|h_{\tau}^{-1/2}(w_h-\mu_h)\|_{\partial\mathcal{T}_h}).
		\end{align*}
		Using $ L^2 $ stability of $ \Pi_k^{\partial} $ and the approximation property of $ \Pi_{k+1}^o $,  one deduces
		\begin{align*}
			R_2&\le C(\|\Delta t\partial_{t}^+\Pi_{k+1}^ou^n\|_{\mathcal{T}_h}+h\|h_{\tau}^{-1/2}\Delta t\partial_{t}^+(\Pi_k^{\partial}\Pi_{k+1}^ou^n-\Pi_k^{\partial}u^n)\|_{\partial\mathcal{T}_h})\\
			&\quad \times(\|\nabla w_h\|_{\mathcal{T}_h}+\|h_{\tau}^{-1/2}(w_h-\mu_h)\|_{\partial\mathcal{T}_h})\\
			&\le C(\|\Delta t\partial_{t}^+\Pi_{k+1}^ou^n\|_{\mathcal{T}_h}+Ch^{k+2}|\Delta t\partial_{t}^+u^n|_{k+2})(\|\nabla w_h\|_{\mathcal{T}_h}+\|h_{\tau}^{-1/2}(w_h-\mu_h)\|_{\partial\mathcal{T}_h}).
		\end{align*}
		Combining all above estimates for $ R_1-R_4$,  we can conclude the  result \eqref{B-error-bounded}. Using \eqref{B-error-bounded} and \eqref{B-es},  we derive the bound \eqref{B-error-bounded-2}.
	\end{proof}

	The error analysis is divided into three lemmas. We first establish an error estimate in the negative norm.
	\begin{lemma}[Error estimates in the negative norm]\label{lemma-negative-norm}
		Suppose the solution $(\bm p,  \phi,  \bm q,  u)$ attains the maximum regularity required for the best approximation results in \eqref{classical_ine}.  Then 
		for any $ h $,  $ \Delta t $ and $ m=1, 2, \cdots, N $,  the following negative norm error bound holds:
		\begin{align}\label{negative-norm-error-estimate}
			\max_{1\le n\le m}\|e_h^{u^n}\|_{-1, h}^2+C\Delta t\sum_{n=1}^{m}\interleave(e_h^{\bm{q}^n}, e_h^{u^n}, e_h^{\widehat{u}^n})\interleave^2\le C(\Delta t^2+h^{2k+4}).
		\end{align}
	\end{lemma}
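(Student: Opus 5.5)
The negative-norm estimate follows the standard Cahn--Hilliard energy argument: test the error equations so that the $\partial_t^+$ term produces $\|e_h^{u^n}\|_{-1,h}^2$ and the chemical potential error cancels. First, mass conservation. Taking $w_1=1$, $\mu_1=1$, $\bm r_1=\bm 0$ in \eqref{error_01} gives $\mathcal A(\cdot;\bm 0,1,1)=0$. The $\mathcal B$ terms vanish because $\nabla 1=0$ and $\langle\bm\beta\cdot\bm n\widehat u,1\rangle_{\partial\mathcal T_h}=0$. Since $(u_{Ih}-u,1)=0$ and $(u_h^0-u^0,1)=0$ by \eqref{hdg03}, we get $e_h^{u^n}\in\mathring W_h$ for all $n$, so $\|e_h^{u^n}\|_{-1,h}$ and the HDG Laplace inverse are well defined.

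Next, the test functions. In \eqref{error_01} take $(\bm r_1,w_1,\mu_1)=(-\bm\Pi_{\bm V}e_h^{u^n},\Pi_We_h^{u^n},\Pi_Me_h^{u^n})$ and multiply by $\epsilon^2$. By \eqref{nonsymmetric-A} and \eqref{def_-1_h}, the $\mathcal A$ term becomes $\tfrac{\epsilon^2}{Pe}(e_h^{\phi^n},e_h^{u^n})_{\mathcal T_h}$. Also $(\partial_t^+e_h^{u^n},\Pi_We_h^{u^n})=(\partial_t^+\Pi_We_h^{u^n},e_h^{u^n})$, and this yields $\tfrac1{2\Delta t}(\|e_h^{u^n}\|_{-1,h}^2-\|e_h^{u^{n-1}}\|_{-1,h}^2+\|e_h^{u^n}-e_h^{u^{n-1}}\|_{-1,h}^2)$. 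In \eqref{error_02} take $(e_h^{\bm q^n},e_h^{u^n},e_h^{\widehat u^n})$ and multiply by $1/Pe$. By \eqref{bounded-A} this gives $\tfrac{\epsilon^2}{Pe}\interleave(e_h^{\bm q^n},e_h^{u^n},e_h^{\widehat u^n})\interleave^2-\tfrac1{Pe}(e_h^{\phi^n},e_h^{u^n})$. Adding the two identities cancels the $e_h^{\phi}$ coupling, provided we rescale so that the coefficients match.

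Then the right-hand sides.
\begin{itemize}
\item Consistency term: $(\partial_t^+u_{Ih}^n-\partial_tu^n,\Pi_We_h^{u^n})$. Subtract the mean (since $e_h^{u^n}$ is mean zero, $\Pi_W e_h^{u^n}\in\mathring W_h$), then use \eqref{error-ut-projection}, the Taylor remainder, \eqref{sobolev-02} and \eqref{lem-nega}. This bounds it by $C(h^{k+2}+\Delta t)\|e_h^{u^n}\|_{-1,h}$.
\item Projection error term: $(\phi^n-\phi^n_{Ih},e_h^{u^n})$. Bound it by $Ch^{k+2}\|e_h^{u^n}\|$, which is controlled through the discrete Sobolev inequality by $\interleave e_h^{\cdot}\interleave$, and absorb with Young's inequality.
\item Nonlinear term: split $F(u_h^n,u_h^{n-1})-f(u^n)=[(u_h^n)^3-(u_{Ih}^n)^3]+[(u_{Ih}^n)^3-(u^n)^3]-(u_h^{n-1}-u^n)$. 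Tested against $e_h^{u^n}=u_{Ih}^n-u_h^n$, the first bracket gives the good-sign quantity $-((u_h^n)^3-(u_{Ih}^n)^3,\,u_h^n-u_{Ih}^n)\le0$ after moving terms, so it can be dropped. The second is $O(h^{k+2})$ by \eqref{u_Ih_infty_bound} and \eqref{error-u-projection}. The linear part reduces to $(e_h^{u^{n-1}},e_h^{u^n})+O(h^{k+2}+\Delta t)$. We bound $(e_h^{u^{n-1}},e_h^{u^n})\le\|e_h^{u^{n-1}}\|_{-1,h}\,\interleave e^n\interleave$ by \eqref{Cauchy} and \eqref{sobolev-02}, then apply Young.
\end{itemize}

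The main obstacle is the convection difference. By Lemma~\ref{error-term-B}, specifically \eqref{B-error-bounded-2} with $\bm p_h=\bm\Pi_{\bm V}e_h^{u^n}$, it is bounded by
\[
C\bigl(h^{k+2}+\Delta t+\|e_h^{u^{n-1}}\|_{\mathcal T_h}+h\|h_\tau^{-1/2}(\Pi_k^\partial e_h^{u^{n-1}}-e_h^{\widehat u^{n-1}})\|_{\partial\mathcal T_h}\bigr)\|e_h^{u^n}\|_{-1,h}.
\]
Here $\|\Delta t\,\partial_t^+\Pi_{k+1}^o u^n\|_{\mathcal T_h}\le C\Delta t$. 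The difficulty is that $\|e_h^{u^{n-1}}\|_{\mathcal T_h}$ is an $L^2$ quantity, not a $-1$ norm. I would use the interpolation $\|e\|^2=(e,e)\le C\|e\|_{-1,h}\interleave e\interleave$, which follows from \eqref{Cauchy} since $e$ is mean zero, together with \eqref{es_u}. This gives $\|e^{n-1}\|\,\|e^n\|_{-1,h}\le\delta\interleave e^{n-1}\interleave^2+C(\|e^{n-1}\|_{-1,h}^2+\|e^n\|_{-1,h}^2)$. The $h$-weighted jump term is bounded by $Ch\interleave e^{n-1}\interleave$, and Young handles it in the same way. The $\delta\interleave e^{n-1}\interleave^2$ terms are absorbed by the dissipation at the previous step after summation; this uses $\interleave e^0\interleave\lesssim h^{k+1}$, which is harmless since it comes with a factor $\Delta t$ and $e^0$ is in fact $O(h^{k+2})$ in $L^2$.

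Finally, multiply by $\Delta t$, sum over $n=1,\dots,m$, and note $\|e_h^{u^0}\|_{-1,h}\le C\|e_h^{u^0}\|\le Ch^{k+2}$ by \eqref{hdg03}, \eqref{nega-imb} and \eqref{error-u-projection}. The discrete Gronwall inequality then yields \eqref{negative-norm-error-estimate}. Since the scheme is unconditionally stable, Gronwall requires no restriction on $\Delta t$ beyond the standard small-$\Delta t$ absorption of the $n=m$ term. For large $\Delta t$ the estimate is trivial by Lemma~\ref{corEnergy}.
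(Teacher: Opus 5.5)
Your proposal is correct and follows essentially the same route as the paper. It uses the same test functions in \eqref{error_01}--\eqref{error_02} (the paper scales the first equation by $Pe$ and the second by $1$; your multipliers $\epsilon^2$ and $1/Pe$ would not cancel the $(e_h^{\phi^n},e_h^{u^n})_{\mathcal T_h}$ terms, but you flagged that a rescaling is needed), the same splitting of the right-hand side, \eqref{B-error-bounded-2} for the convection difference with the $\delta\interleave(e_h^{\bm q^{n-1}},e_h^{u^{n-1}},e_h^{\widehat u^{n-1}})\interleave^2$ term absorbed into the previous step's dissipation, and Gronwall. Your interpolation bound for $\|e_h^{u^{n-1}}\|_{\mathcal T_h}$ is unnecessary, because the paper simply bounds it by $C\interleave(e_h^{\bm q^{n-1}},e_h^{u^{n-1}},e_h^{\widehat u^{n-1}})\interleave$ via \eqref{sobolev-02} and \eqref{es_u}; on the other hand, your mean-zero argument, your treatment of the $n=1$ term, and your large-$\Delta t$ remark fill in points the paper leaves implicit.
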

	\begin{proof}
		Substituting  $ (\bm{r}_1, w_1, \mu_1)=Pe(-\bm{\Pi_V}e_h^{u^n}, \Pi_We_h^{u^n}, \Pi_Me_h^{u^n}) $ into  \eqref{error_01} and  $ (\bm{r}_2, w_2, $ $\mu_2)=(e_h^{\bm{q}^n},   e_h^{u^n}, e_h^{\widehat{u}^n}) $ into \eqref{error_02},  then summing the resulting equations,  we obtain
		\begin{align}\label{negative-es-eq}
			&Pe(\partial_{t}^+e_h^{u^n}, \Pi_We_h^{u^n})_{\mathcal{T}_h}+\epsilon^2\interleave(e_h^{\bm{q}^n}, e_h^{u^n}, e_h^{\widehat{u}^n}) \interleave^2=Pe(\partial_{t}^+u_{Ih}^n-\partial_tu^n, \Pi_We_h^{u^n})_{\mathcal{T}_h}
			\\
			&+(\phi^n-\phi_{Ih}^n, e_h^{u^n})_{\mathcal{T}_h}+(F(u_h^n, u_{h}^{n-1})-f(u^n), e_h^{u^n})_{\mathcal{T}_h}\\
			&+Pe[\mathcal{B}(u_h^{n-1}, \widehat{u}_h^{n-1};\Pi_We_h^{u^n})-\mathcal{B}(u_{Ih}^n, \widehat{u}_{Ih}^n;\Pi_We_h^{u^n})]
			.
		\end{align}
		Utilizing the definitions of  $ \bm{\Pi_V}, \Pi_W, \Pi_M $ and \eqref{def_A},  we derive
		\begin{align}\label{temp-2-01}
			(\partial_t^+e_h^{u^n}, \Pi_We_h^{u^n})_{\mathcal{T}_h}&=\mathcal{A}(\bm{\Pi_V}\partial_{t}^+e_h^{u^n}, \Pi_W\partial_t^+e_h^{u^n}, \Pi_M\partial_t^+e_h^{u^n};\bm{0}, \Pi_We_h^{u^n}, \Pi_Me_h^{u^n})\\
			&=(\nabla\cdot\bm{\Pi_V}\partial_t^+e_h^{u^n}, \Pi_We_h^{u^n})_{\mathcal{T}_h}-\langle \bm{n}\cdot\bm{\Pi_V}\partial_{t}^+e_h^{u^n}, \Pi_Me_h^{u^n}\rangle_{\partial \mathcal{T}_h}\nonumber\\
			&\quad +\alpha\langle h_{\tau}^{-1}(\Pi_k^{\partial}\Pi_We_h^{u^n}-\Pi_Me_h^{u^n}), \partial_{t}^+(\Pi_k^{\partial}\Pi_We_h^{u^n}-\Pi_Me_h^{u^n})\rangle_{\partial \mathcal{T}_h}\nonumber.
		\end{align}
		{ On the other hand, by the definition of the HDG Laplace inverse in \eqref{def_-1_h}, we have 
			\begin{align}\label{temp-2-02}
				\mathcal{A}(\bm{\Pi_V}e_h^{u^n},\Pi_We_h^{u^n},\Pi_Me_h^{u^n};\bm{r}_h,w_h,\mu_h)
				=(e_h^{u^n},w_h)_{\mathcal{T}_h},
			\end{align}
			for all $(\bm{r}_h,w_h,\mu_h)$ in the corresponding test spaces. 
			Choosing $(\bm{r}_h,w_h,\mu_h)=(\bm{\Pi_V}\partial_t^+e_h^{u^n},0,0)$ in \eqref{temp-2-02}, and using the definition of $\mathcal{A}$ in \eqref{def_A},  we obtain} 
		\begin{align}\label{temp-2-03}
			(\nabla\cdot\bm{\Pi_V}\partial_t^+e_h^{u^n}, \Pi_We_h^{u^n})_{\mathcal{T}_h}-\langle \bm{n}\cdot\bm{\Pi_V}\partial_{t}^+e_h^{u^n}, \Pi_Me_h^{u^n}\rangle_{\partial \mathcal{T}_h}=(\bm{\Pi_V}e_h^{u^n}, \partial_{t}^+\bm{\Pi_V}e_h^{u^n})_{\mathcal{T}_h}.
		\end{align}
		Recall the definition of DG Laplace inverse in \eqref{def_-1_h},  one has
			\begin{align}\label{-1--norm--1}
				\|e_h^{u^n}\|_{-1, h}^2&=\mathcal{A}(\bm{\Pi}_{\bm{V}}e_h^{u^n}, \Pi_We_h^{u^n}, \Pi_{M}e_h^{u^n};\bm{\Pi}_{\bm{V}}e_h^{u^n}, \Pi_We_h^{u^n}, \Pi_{M}e_h^{u^n})\\
				&=\|\Pi_{\bm{V}}e_h^{u^n}\|_{\mathcal{T}_h}^2+\alpha\|h_{\tau}^{-1/2}(\Pi_We_h^{u^n}-\Pi_{M}e_h^{u^n})\|_{\partial\mathcal{T}_h}^2, \nonumber\\
				\label{-1--norm--2}
				\|\partial_{t}^+e_h^{u^n}\|_{-1, h}^2&=\mathcal{A}(\partial_{t}^+\bm{\Pi}_{\bm{V}}e_h^{u^n}, \partial_{t}^+\Pi_We_h^{u^n}, \partial_{t}^+\Pi_{M}e_h^{u^n};\partial_{t}^+\bm{\Pi}_{\bm{V}}e_h^{u^n}, \partial_{t}^+\Pi_We_h^{u^n}, \partial_{t}^+\Pi_{M}e_h^{u^n})\\
				&=\|\partial_{t}^+\Pi_{\bm{V}}e_h^{u^n}\|_{\mathcal{T}_h}^2+\alpha\|h_{\tau}^{-1/2}\partial_{t}^+(\Pi_We_h^{u^n}-\Pi_{M}e_h^{u^n})\|_{\partial\mathcal{T}_h}^2. \nonumber
		\end{align}
		Thus, substituting \eqref{temp-2-03} into \eqref{temp-2-01} and applying the   identity $(a-b)a=\frac{a^2-b^2+(a-b)^2}{2}$, together with \eqref{-1--norm--1} and \eqref{-1--norm--2}, we obtain
			\begin{align}\label{temp-2-05}
				&(\partial_{t}^+e_h^{u^n}, \Pi_We_h^{u^n})_{\mathcal{T}_h}\\
				&=(\Pi_{\bm{V}}e_h^{u^n}, \partial_{t}^+\Pi_{\bm{V}}e_h^{u^n})_{\mathcal{T}_h}
				+\alpha\langle h_{\tau}^{-1/2}(\Pi_k^{\partial}\Pi_We_h^{u^n}-\Pi_{M}e_h^{u^n}), \partial_{t}^+(\Pi_k^{\partial}\Pi_We_h^{u^n}-\Pi_{M}e_h^{u^n})\rangle_{\partial \mathcal{T}_h}\nonumber\\
				&=\frac{1}{2\Delta t}(\|\Pi_{\bm{V}}e_h^{u^n}\|_{\mathcal{T}_h}^2-\|\Pi_{\bm{V}}e_h^{u^{n-1}}\|_{\mathcal{T}_h}^2+\|\Delta t\partial_{t}^+(\Pi_{\bm{V}}e_h^{u^n})\|_{\mathcal{T}_h}^2)\nonumber\\
				&\quad +\frac{\alpha}{2\Delta t}(\|h_{\tau}^{-1/2}(\Pi_k^{\partial}\Pi_We_h^{u^n}-\Pi_{M}e_h^{u^n})\|_{\partial\mathcal{T}_h}^2
				-\|h_{\tau}^{-1/2}(\Pi_k^{\partial}\Pi_We_h^{u^{n-1}}-\Pi_{M}e_h^{u^{n-1}})\|_{\partial\mathcal{T}_h}^2)\nonumber\\
				&\quad +\frac{\alpha}{2\Delta t}\|\Delta t\partial_{t}^+(\Pi_k^{\partial}\Pi_We_h^{u^n}-\Pi_Me_h^{u^n})\|_{\partial\mathcal{T}_h}^2\nonumber\\
				&=\frac{1}{2}\partial_{t}^+\|e_h^{u^n}\|_{-1, h}^2+\frac{\Delta t}{2}\|\partial_{t}^+e_h^{u^n}\|_{-1, h}^2.\nonumber
			\end{align}

		Consequently,  substituting \eqref{temp-2-05} into \eqref{negative-es-eq},  the following error equation holds:
		\begin{gather}\label{error-eq-negative norm}
			\begin{split}
				&\frac{Pe}{2}\partial_{t}^+\|e_h^{u^n}\|_{-1, h}^2+\frac{Pe\Delta t}{2}\|\partial_{t}^+e_h^{u^n}\|_{-1, h}^2
				+\epsilon^2\interleave(e_h^{\bm{q}^n}, e_h^{u^n}, e_h^{\widehat{u}^n}) \interleave^2\\
				&=Pe(\partial_{t}^+u_{Ih}^n-\partial_tu^n, \Pi_We_h^{u^n})_{\mathcal{T}_h}
				+(\phi^n-\phi_{Ih}^n, e_h^{u^n})_{\mathcal{T}_h}+(F(u_h^n, u_{h}^{n-1})-f(u^n), e_h^{u^n})_{\mathcal{T}_h}\\
				&\quad+Pe[\mathcal{B}(u_h^{n-1}, \widehat{u}_h^{n-1};\Pi_We_h^{u^n})-\mathcal{B}(u_{Ih}^n, \widehat{u}_{Ih}^n;\Pi_We_h^{u^n})]\\
				&=:\sum_{i=1}^{4}R_i.
			\end{split}
		\end{gather}
		
		{ Next, we estimate the right-hand terms \(\{R_i\}_{i=1}^4\). 
			By the definition of the HDG Laplace inverse in \eqref{def_-1_h}, we have
			\[
			\mathcal{A}(\Pi_{\bm V}e_h^{u^n}, \Pi_W e_h^{u^n}, \Pi_M e_h^{u^n};\, \bm r_h, 0, 0)=0,
			\qquad \forall\, \bm r_h\in \bm V_h .
			\]
			Therefore, applying the HDG Sobolev inequality \eqref{sobolev-02} with 
			\((w_h,\mu_h)=(\Pi_W e_h^{u^n},\Pi_M e_h^{u^n})\), and using \eqref{es_u}, we obtain
			\begin{align}
				\|\Pi_W e_h^{u^n}\|_{\mathcal{T}_h}
				&\le C\Big(\|\nabla \Pi_W e_h^{u^n}\|_{\mathcal{T}_h}
				+\alpha^{1/2}\|h_{\tau}^{-1/2}(\Pi_k^{\partial}\Pi_W e_h^{u^n}-\Pi_M e_h^{u^n})\|_{\partial\mathcal{T}_h}\Big)\nonumber\\
				&\le C\Big(\|\Pi_{\bm V} e_h^{u^n}\|_{\mathcal{T}_h}
				+\alpha^{1/2}\|h_{\tau}^{-1/2}(\Pi_k^{\partial}\Pi_W e_h^{u^n}-\Pi_M e_h^{u^n})\|_{\partial\mathcal{T}_h}\Big)\nonumber\\
				&\le C\,\|e_h^{u^n}\|_{-1,h}.\label{temp-2-04}
			\end{align}
			Consequently, by the Cauchy--Schwarz inequality, Young's inequality, and \eqref{temp-2-04}, we derive
			\begin{align*}
				R_1\le  C\|\partial_t^+u_{Ih}^n-\partial_tu^n\|_{\mathcal{T}_h}
				\|\Pi_We_h^{u^n}\|_{\mathcal{T}_h}
				\le C\|\partial_t^+u_{Ih}^n-\partial_tu^n\|_{\mathcal{T}_h}^2+\|e_h^{u^n}\|_{-1, h}^2.
			\end{align*}
		}
		 Applying Young’s inequality  and \eqref{sobolev-02},  \eqref{es_u} to  $R_2$,  we obtain
			\begin{align*}
				R_2&\le \|\phi^n-\phi_{Ih}^n\|_{\mathcal{T}_h}\|e_h^{u^n}\|_{\mathcal{T}_h}
				\le {\|\phi^n-\phi_{Ih}^n\|_{\mathcal{T}_h}}\interleave (e_h^{\bm{q}^n}, e_h^{u^n}, e_h^{\widehat{u}^n})\interleave\\
				&\le {C\|\phi^n-\phi_{Ih}^n\|_{\mathcal{T}_h}^2}+\delta\epsilon^2\interleave (e_h^{\bm{q}^n}, e_h^{u^n}, e_h^{\widehat{u}^n})\interleave^2, 
			\end{align*}
		
		where $ \delta $ is an arbitrary positive constant to be determined later.

		For the nonlinear term,  we have 
		\begin{align*}
			F(u_h^n,  u_h^{n-1})-f(u^n)&=(u_h^n)^3-(u_{Ih}^n)^3+ (u_{Ih}^n)^3-(u^n)^3 +u^n-u_{Ih}^n+e_h^{u^n}+\Delta t\partial_t^+u_h^n.
		\end{align*}
		Introducing {
			\begin{align}\label{xi_hn}
				\xi_h^n
				:=(u_h^n)^2+u_h^nu_{Ih}^n+(u_{Ih}^n)^2
				=\Big(u_h^n+\frac{1}{2}u_{Ih}^n\Big)^2+\frac{3}{4}(u_{Ih}^n)^2 \ge 0,
			\end{align}
			we have \((u_h^n)^3-(u_{Ih}^n)^3=(u_h^n-u_{Ih}^n)\xi_h^n\). Hence, we can rewrite \(R_3\) as
			\begin{align*}
				R_3
				&=((u_h^n)^3-(u_{Ih}^n)^3,e_h^{u^n})_{\mathcal T_h}
				+((u_{Ih}^n)^3-(u^n)^3,e_h^{u^n})_{\mathcal T_h}
				+(u^n-u_{Ih}^n,e_h^{u^n})_{\mathcal T_h}\\
				&\quad +(e_h^{u^n},e_h^{u^n})_{\mathcal T_h}
				+(\Delta t\,\partial_t^+u_h^n,e_h^{u^n})_{\mathcal T_h}
				=:R_{31}+R_{32}+R_{33}+R_{34}+R_{35}.
			\end{align*}
			For the term \(R_{31}\), a direct computation yields
			\begin{align*}
				R_{31}=((u_h^n-u_{Ih}^n)\xi_h^n,e_h^{u^n})_{\mathcal T_h}
				=-((e_h^{u^n})^2,\xi_h^n)_{\mathcal T_h}\le 0.
			\end{align*}
			Using the Cauchy--Schwarz inequality, the triangle inequality, the uniform bound in \eqref{u_Ih_infty_bound},
			and the approximation property in \eqref{error-u-projection}, we deduce
			\begin{align*}
				R_{32}
				&=((u_{Ih}^n-u^n)\big((u_{Ih}^n)^2+u_{Ih}^nu^n+(u^n)^2\big),e_h^{u^n})_{\mathcal T_h}\\
				&\le C\|u_{Ih}^n-u^n\|_{\mathcal T_h}
				\big(\|u_{Ih}^n\|_{L^\infty(\Omega)}^2+\|u^n\|_{L^\infty(\Omega)}^2\big)\|e_h^{u^n}\|_{\mathcal T_h}
				\le Ch^{k+2}|u^n|_{k+2}.
			\end{align*}
			Applying the Cauchy--Schwarz inequality and \eqref{error-u-projection} to \(R_{33}\), we have
			\begin{align*}
				R_{33}\le \|u^n-u_{Ih}^n\|_{\mathcal T_h}\|e_h^{u^n}\|_{\mathcal T_h}
				\le Ch^{k+2}|u^n|_{k+2}.
			\end{align*}
			Applying the negative-norm estimate \eqref{Cauchy} to \(R_{34}\) yields
			\begin{align*}
				R_{34}
				&\le C\|e_h^{u^n}\|_{-1,h}
				\Big(\|\nabla e_h^{u^n}\|_{\mathcal T_h}
				+\alpha^{1/2}\|h_{\tau}^{-1/2}(\Pi_k^{\partial}e_h^{u^n}-e_h^{\widehat{u}^n})\|_{\partial\mathcal T_h}\Big)\\
				&\le C\|e_h^{u^n}\|_{-1,h}\,\interleave(e_h^{\bm q^n},e_h^{u^n},e_h^{\widehat{u}^n})\interleave.
			\end{align*}
			Finally, by the Cauchy--Schwarz inequality,
			\begin{align*}
				R_{35}\le \|\Delta t\,\partial_t^+u_h^n\|_{\mathcal T_h}\,\|e_h^{u^n}\|_{\mathcal T_h}.
			\end{align*}
			Collecting the above bounds for \(\{R_{3i}\}_{i=1}^5\) and applying Young's inequality, we obtain
			\begin{align*}
				R_3
				\le Ch^{2k+4}|u^n|_{k+2}^2 + C\|e_h^{u^n}\|_{-1,h}^2
				+\delta\epsilon^2\interleave(e_h^{\bm q^n},e_h^{u^n},e_h^{\widehat{u}^n})\interleave^2
				+C\Delta t^2\|\partial_t^+u_h^n\|_{\mathcal T_h}^2.
		\end{align*}}
		For $R_4$,  apply \Cref{error-term-B} to yield
		\begin{align*}
			R_4& \le C(h^{k+2}+\|\Delta t\partial_{t}^+\Pi_{k+1}^ou^n\|_{\mathcal{T}_h}+\interleave(e_h^{\bm{q}^{n-1}}, e_h^{u^{n-1}}, e_h^{\widehat{u}^{n-1}})\interleave)\|e_h^u\|_{-1, h}\\
			&\le
			Ch^{2k+4}+C\|\Delta t\partial_{t}^+\Pi_{k+1}^ou^n\|_{\mathcal{T}_h}^2+ \delta\epsilon^2\interleave(e_h^{\bm{q}^{n-1}}, e_h^{u^{n-1}}, e_h^{\widehat{u}^{n-1}})\interleave^2+
			C\|e_h^{u^n}\|_{-1, h}^2.
		\end{align*}
		Multiplying \eqref{error-eq-negative norm} by $\Delta t$,  applying the above estimates for $\{R_i\}_{i=1}^4$,  and summing from $n=1$ to $m$,  we obtain
		\begin{gather}\label{negative-norm-summation}
			\begin{split}
				&\frac{Pe}{2}\|e_h^{u^m}\|_{-1, h}^2+\frac{Pe{\alpha}\Delta t^2}{2}\sum_{n=1}^{m}\|\partial_{t}^+e_h^{u^n}\|_{-1, h}^2+\epsilon^2(1-3\delta)\Delta t\sum_{n=1}^{m}\interleave (e_h^{\bm{q}^n}, e_h^{u^n}, e_h^{\widehat{u}^n})\interleave^2\\
				&\le C\Delta t\sum_{n=1}^{m}\|\partial_{t}^+u_{Ih}^n-\partial_tu^n\|_{\mathcal{T}_h}^2
				+C\Delta t\sum_{n=1}^{m}\|e_h^{u^{n}}\|_{-1, h}^2+Ch^{2k+4}\\
				&\quad +C\Delta t^3\sum_{n=1}^{m}(\|\partial_{t}^+u_h^n\|_{\mathcal{T}_h}^2+\|\partial_{t}^+\Pi_{k+1}^ou^n\|_{\mathcal{T}_h}^2)+\frac{Pe}{2}\|e_h^{u^0}\|_{-1, h}^2+ {C\|\phi^n-\phi_{Ih}^n\|_{\mathcal{T}_h}^2}.
			\end{split}
		\end{gather}
		Now we turn to bound the right hand terms as follows:
		\begin{align}
			\label{temp-2-06}
			&\Delta t\sum_{n=1}^{m}\|\partial_t^+u_{Ih}^n-\partial_tu^n\|_{\mathcal{T}_h}^2\le 
			C\Delta t\sum_{n=1}^{m}(\|\partial_t^+(u_{Ih}^n-u^n)\|_{\mathcal{T}_h}^2+\|\partial_{t}^+u^n-\partial_{t}u^n\|_{\mathcal{T}_h}^2)\\
			&\qquad\le \frac{1}{\Delta t}\sum_{n=1}^{m}\int_{\Omega}\{[\int_{t_{n-1}}^{t_n}\partial_t(u_{Ih}-u)\; {\rm d} t]^2+[\int_{t_{n-1}}^{t_n}(t-t_{n-1})\partial_{tt}u\; {\rm d} t]^2\}\; {\rm d} {\bm x}\nonumber\\
			&\qquad\le \int_{0}^{T}\|\partial_{t}(u_{Ih}-u)\|_{L^2(\Omega)}^2\;{\rm d}t+\Delta t^2\int_{0}^{T}\|\partial_{tt}u\|_{L^2(\Omega)}^2\;{\rm d}t, \nonumber\\
			\label{temp-2-07}
			&\Delta t^3\sum_{n=1}^{m}\|\partial_{t}^+\Pi_{k+1}^ou^n\|_{\mathcal{T}_h}^2=
			\Delta t\sum_{n=1}^{m}\int_{\Omega}(\int_{t_{n-1}}^{t_n}\partial_t\Pi_{k+1}^ou{\rm d}t)^2{\rm d}\bm{x}\\
			&\le C\Delta t^2\|\partial_{t}\Pi_{k+1}^ou\|_{L^2(0, T;L^2(\Omega))}^2.\nonumber
		\end{align}
		Finally,  choosing $\delta \in (0, 1/4)$ and applying Gronwall’s inequality along with the stability result \eqref{phi_infty-1},  we obtain the desired bound.
	\end{proof}
	\begin{remark}\label{negative-norm-remark}
		As stated in \Cref{approximationprojectionremark},  when the operator $\mathcal{B}$ is employed in the form given by \eqref{def_B_1},  the convergence rate of $\|\phi^n - \phi_{I h}^n\|_{\mathcal{T}_h}$ becomes suboptimal when $k = 0$. Consequently,  the estimate in \eqref{negative-norm-summation} implies that,  under this choice of $\mathcal{B}$,  the negative norm estimate takes the form
		\begin{align*}
			\max_{1\le n\le m}\|e_h^{u^n}\|_{-1, h}^2+C\Delta t\sum_{n=1}^{m}\interleave(e_h^{\bm{q}^n}, e_h^{u^n}, e_h^{\widehat{u}^n})\interleave^2
			\le C(\Delta t^2+h^{2k+2+2\min\{k, 1\}}), 
		\end{align*}
		which results in the loss of optimal convergence rates for the scalar variables in the $ L^2 $ norm.
	\end{remark}
	
	Next,  we analyze the error of the scalar variables in the $ L^2 $ norm.
	\begin{lemma}\label{scalar-variables-error-result}
		Suppose the solution $(\bm p,  \phi,  \bm q,  u)$ attains the maximum regularity required for the best approximation results in \eqref{classical_ine}. For any $ h $,  $ \Delta t $ and $ m=1, 2, \cdots, N $, 
		the following error estimate holds:
		\begin{align}\label{scalar-error}
			\Delta t\sum_{n=1}^{m}\|\phi^n-\phi_h^n\|_{L^2(\Omega)}^2
			+\max_{1\le n\le m}\|u^n-u_h^n\|_{L^2(\Omega)}^2\le C( h^{k+2}+\Delta t)^2.
		\end{align}
	\end{lemma}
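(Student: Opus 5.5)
By the triangle inequality and the projection estimates \eqref{error-u-projection} and \eqref{error-phi-projection} of \Cref{HDG-elliptic-projection-convergence results}, it suffices to prove
\[
\max_{1\le n\le m}\|e_h^{u^n}\|_{\mathcal{T}_h}^2+\Delta t\sum_{n=1}^m\|e_h^{\phi^n}\|_{\mathcal{T}_h}^2\le C(h^{k+2}+\Delta t)^2 .
\]
The plan is to run an $L^2$-type energy argument on the error equations \eqref{error_01}--\eqref{error_02}. The negative-norm bound \eqref{negative-norm-error-estimate} of \Cref{lemma-negative-norm} will serve as a priori information. In particular, it already gives $\Delta t\sum_n\interleave(e_h^{\bm q^n},e_h^{u^n},e_h^{\widehat u^n})\interleave^2\le C(\Delta t^2+h^{2k+4})$. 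This quantity controls the convection remainder and the $e_h^{u^{n-1}}$ terms.

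\textbf{Step 1: the test functions.} In \eqref{error_01} I take $(\bm r_1,w_1,\mu_1)=(-e_h^{\bm q^n},e_h^{u^n},e_h^{\widehat u^n})$ scaled by $\epsilon^2$. In \eqref{error_02} I take $(\bm r_2,w_2,\mu_2)=(e_h^{\bm p^n},-e_h^{\phi^n},-e_h^{\widehat\phi^n})$ scaled by $1/Pe$. Adding the two and using the antisymmetry identity \eqref{nonsymmetric-A}, the $\mathcal A$-terms cancel, as in the stability proof of \eqref{phi_infty-1}. This leaves
\[
\tfrac{\epsilon^2}{2}\partial_t^+\|e_h^{u^n}\|^2_{\mathcal{T}_h}+\tfrac{\epsilon^2\Delta t}{2}\|\partial_t^+e_h^{u^n}\|^2_{\mathcal{T}_h}+\tfrac1{Pe}\|e_h^{\phi^n}\|^2_{\mathcal{T}_h}=\text{RHS}.
\]
The RHS consists of four terms:
\begin{itemize}
\item the consistency term $\epsilon^2(\partial_t^+u_{Ih}^n-\partial_tu^n,e_h^{u^n})$;
\item the projection term $-\frac1{Pe}(\phi^n-\phi^n_{Ih},e_h^{\phi^n})$;
\item the nonlinear term $-\frac1{Pe}(F(u_h^n,u_h^{n-1})-f(u^n),e_h^{\phi^n})$;
\item the convection term $-\epsilon^2[\mathcal B(u_{Ih}^n,\widehat u_{Ih}^n;e_h^{u^n})-\mathcal B(u_h^{n-1},\widehat u_h^{n-1};e_h^{u^n})]$.
\end{itemize}

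\textbf{Step 2: the linear terms.} The consistency term is handled by Cauchy--Schwarz, Young, and \eqref{temp-2-06}. The projection term uses \eqref{error-phi-projection} and absorbs $\frac1{4Pe}\|e_h^{\phi^n}\|^2$. For the convection term I apply \eqref{B-error-bounded}. The factor $\|\nabla e_h^{u^n}\|+\|h_\tau^{-1/2}(e_h^{u^n}-e_h^{\widehat u^n})\|$ is bounded by $C\interleave(e_h^{\bm q^n},e_h^{u^n},e_h^{\widehat u^n})\interleave$ via \eqref{es_u}. I use \eqref{es_u} here because taking $\bm r_2$ only in \eqref{error_02} gives $\mathcal A(e_h^{\bm q^n},e_h^{u^n},e_h^{\widehat u^n};\bm r,0,0)=0$. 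Young then bounds this term by
\[
C\bigl(h^{2k+4}+\|\Delta t\partial_t^+\Pi_{k+1}^o u^n\|^2+\|e_h^{u^{n-1}}\|^2+h^2\interleave(e_h^{\bm q^{n-1}},e_h^{u^{n-1}},e_h^{\widehat u^{n-1}})\interleave^2+\interleave(e_h^{\bm q^n},e_h^{u^n},e_h^{\widehat u^n})\interleave^2\bigr).
\]
After multiplying by $\Delta t$ and summing, every term is controlled by \eqref{temp-2-07}, by Gronwall, and by \Cref{lemma-negative-norm}.

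\textbf{Step 3: the nonlinear term (main obstacle).} I split $F-f$ as in the proof of \Cref{lemma-negative-norm}. The pieces $(u_{Ih}^n)^3-(u^n)^3$, $u^n-u_{Ih}^n$ and $\Delta t\,\partial_t^+u_h^n$ are paired against $e_h^{\phi^n}$ with Young. These are $O(h^{k+2})$ by \eqref{u_Ih_infty_bound} and \eqref{error-u-projection}, and $O(\Delta t)$ in the summed sense by \eqref{phi_infty-1}. The linear piece $e_h^{u^n}$ gives $C\|e_h^{u^n}\|^2+\frac1{8Pe}\|e_h^{\phi^n}\|^2$. The cubic difference $-(e_h^{u^n}\xi_h^n,e_h^{\phi^n})$ cannot use the sign argument, since it is tested with $e_h^{\phi^n}$. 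Instead I use the $L^\infty$ bounds \eqref{uh_infty_bounded} and \eqref{u_Ih_infty_bound}, which give $\|\xi_h^n\|_{L^\infty}\le C$. Hence this term is at most $C\|e_h^{u^n}\|\|e_h^{\phi^n}\|\le C\|e_h^{u^n}\|^2+\frac1{8Pe}\|e_h^{\phi^n}\|^2$. These uniform bounds are exactly what makes the step work and are the delicate point. Without them one would need $L^6$ Sobolev bounds via \eqref{sobolev-01} together with the $\interleave\cdot\interleave$ control of $e_h^{\phi^n}$, which is not available.

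\textbf{Step 4: conclusion.} Summing in $n$ and bounding $e_h^{u^0}$ via \eqref{hdg03} and the projection estimates, a discrete Gronwall argument on $\|e_h^{u^n}\|^2$ yields the bound. The term $\|e_h^{u^{n-1}}\|^2$ causes no trouble: it is a lagged Gronwall term, and moreover it is already $O(\Delta t^2+h^{2k+4})$ in summed form by \eqref{sobolev-02} and \Cref{lemma-negative-norm}.
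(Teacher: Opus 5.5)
Your proposal is correct and follows essentially the same route as the paper. You use the same test functions scaled by $\epsilon^2$ and $1/Pe$, the same cancellation via \eqref{nonsymmetric-A}, the same $L^\infty$ bounds from \Cref{corEnergy} for the cubic term, and the same combination of \Cref{error-term-B} with \Cref{lemma-negative-norm} for the convection term. One refinement: the paper never invokes Gronwall here, because \eqref{negative-norm-error-estimate} together with \eqref{sobolev-02} already gives $\Delta t\sum_{n}\|e_h^{u^n}\|_{\mathcal T_h}^2\le C(h^{2k+4}+\Delta t^2)$. You should use this bound for the non-lagged $\|e_h^{u^n}\|^2$ terms as well, since an implicit discrete Gronwall would impose a step-size restriction incompatible with ``for any $\Delta t$''.
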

	\begin{proof}
		We set $ (\bm{r}_1, w_1, \mu_1)=(-e_h^{\bm{q}^n}, e_h^{u^n}, e_h^{\widehat{u}^n}) $ in \eqref{error_01} and $ (\bm{r}_2, w_2, \mu_2)=(e_h^{\bm{p}^n}, -e_h^{\phi^n}, -e_h^{\widehat{\phi}^n}) $ in \eqref{error_02} to obtain:
		\begin{gather}\label{error-eq-scalar-variable-01}
			\begin{split}
				&(\partial_t^+e_h^{u^n}, e_h^{u^n})_{\mathcal{T}_h}+\frac{1}{Pe}\mathcal{A}(e_h^{\bm{p}^n}, e_h^{\phi^n}, e_h^{\widehat{\phi}^n};-e_h^{\bm{q}^n}, e_h^{u^n}, e_h^{\widehat{u}^n})
				\\
				&=(\partial_t^+u_{Ih}^n-\partial_tu^n, e_h^{u^n})_{\mathcal{T}_h}
				+\mathcal{B}(u_h^{n-1}, \widehat{u}_h^{n-1};e_h^{u^n})-\mathcal{B}(u_{Ih}^n, \widehat{u}_{Ih}^n;e_h^{u^n})
				, 
			\end{split}
		\end{gather}
		and
		\begin{gather}\label{error-eq-scalar-variable-02}
			\begin{split}
				&\epsilon^2\mathcal{A}(e_h^{\bm{q}^n}, e_h^{u^n}, e_h^{\widehat{u}^n};e_h^{\bm{p}^n}, -e_h^{\phi^n}, -e_h^{\widehat{\phi}^n})
				+\|e_h^{\phi^n}\|_{\mathcal{T}_h}^2=(\phi^n-\phi_{Ih}^n, -e_h^{\phi^n})_{\mathcal{T}_h}
				\\
				&\qquad+(F(u_h^n, u_h^{n-1})-f(u^n), -e_h^{\phi^n})_{\mathcal{T}_h}.
			\end{split}
		\end{gather}
		Multiplying \eqref{error-eq-scalar-variable-01} by $ \epsilon^2 $ and    \eqref{error-eq-scalar-variable-02} by $\displaystyle \frac{1}{Pe} $,  then adding the resulting equations and using the definition of $ \mathcal{B} $,  we derive the following:
		\begin{align}
			&\epsilon^2(\partial_t^+e_h^{u^n}, e_h^{u^n})_{\mathcal{T}_h}+\frac{1}{Pe}\|e_h^{\phi^n}\|_{\mathcal{T}_h}^2\nonumber\\
			&=-\frac{1}{Pe}(\phi^n-\phi_{Ih}^n, e_h^{\phi^n})_{\mathcal{T}_h}+
			\epsilon^2(\partial_{t}^+u_{Ih}^n-\partial_tu^n, e_h^{u^n})_{\mathcal{T}_h}
			-\frac{1}{Pe}(F(u_h^{n}, u_h^{n-1})-f(u^n), e_h^{\phi^n})_{\mathcal{T}_h}\\
			&\quad +\epsilon^2[\mathcal{B}(u_h^{n-1}, \widehat{u}_h^{n-1};e_h^{u^n})-\mathcal{B}(u_{Ih}^n, \widehat{u}_{Ih}^n;e_h^{u^n})]
			.\label{eq409}
		\end{align}
		where we use the equality
		\begin{align*}
			\mathcal{A}(e_h^{\bm{p}^n}, e_h^{\phi^n}, e_h^{\widehat{\phi}^n};-e_h^{\bm{q}^n}, e_h^{u^n}, e_h^{\widehat{u}^n})=\mathcal{A}(-e_h^{\bm{q}^n}, -e_h^{u^n}, -e_h^{\widehat{u}^n};e_h^{\bm{p}^n}, -e_h^{\phi^n}, -e_h^{\widehat{\phi}^n}).
		\end{align*}
		
		We now multiply \eqref{eq409} by $ \Delta t $ and sum from $ n=1 $ to $ n=m $,  yielding:
		\begin{align*}
			&\frac{\epsilon^2}{2}\|e_h^{u^m}\|_{\mathcal{T}_h}^2
			-\frac{\epsilon^2}{2}\|e_h^{u^0}\|_{\mathcal{T}_h}^2
			+\frac{\epsilon^2}{2}\sum_{n=1}^{m}\|e_h^{u^n}-e_h^{u^{n-1}}\|_{\mathcal{T}_h}^2+\frac{\Delta t}{Pe}\sum_{n=1}^{m}\|e_h^{\phi^n}\|_{\mathcal{T}_h}^2
			\\
			&=-\frac{\Delta t}{Pe}\sum_{n=1}^{m}(\phi^n-\phi_{Ih}^n, e_h^{\phi^n})_{\mathcal{T}_h}
			+\epsilon^2\Delta t\sum_{n=1}^{m}(\partial_{t}^+u_{Ih}^n-\partial_tu^n, e_h^{u^n})_{\mathcal{T}_h}\\
			&\quad -\frac{\Delta t}{Pe}\sum_{n=1}^{m}(F(u_h^{n}, u_h^{n-1})-f(u^n), e_h^{\phi^n})_{\mathcal{T}_h}
			\\
			&\quad +\epsilon^2\Delta t\sum_{n=1}^{m}[\mathcal{B}(u_h^{n-1}, \widehat{u}_h^{n-1};e_h^{u^n})-\mathcal{B}(u_{Ih}^n, \widehat{u}_{Ih}^n;e_h^{u^n})]\\
			&=:\sum_{i=1}^{4}R_i.
		\end{align*}
		
		Next,   we perform   error estimates for  the terms $ \{R_i\}_{i=1}^4$.   Applying Young's inequality to $ R_1 $ and $ R_2 $,  we obtain
		\begin{align*}
			R_1&\le \frac{\Delta t}{Pe}\sum_{n=1}^{m}\|\phi^n-\phi_{Ih}^n\|_{\mathcal{T}_h}\|e_h^{\phi^n}\|_{\mathcal{T}_h}\\
			&\le \frac{1}{Pe}\sum_{n=1}^{m}(C\Delta t {\|\phi^n-\phi_{Ih}^n\|_{\mathcal{T}_h}^2}+\delta \Delta t\|e_h^{\phi^n}\|_{\mathcal{T}_h}^2)\\
			&\le {C\Delta t\sum_{n=1}^{m}\|\phi^n-\phi_{Ih}^n\|_{\mathcal{T}_h}^2}+\frac{\delta\Delta t}{Pe}\sum_{n=1}^{m}\|e_h^{\phi^n}\|_{\mathcal{T}_h}^2, \\
			R_2&\le \epsilon^2\sum_{n=1}^{m}\Delta t\|\partial_t^+u_{Ih}^n-\partial_tu^n\|_{\mathcal{T}_h}\|e_h^{u^n}\|_{\mathcal{T}_h}\le C\Delta t\sum_{n=1}^{m}\|\partial_t^+u_{Ih}^n-\partial_tu^n\|_{\mathcal{T}_h}^2+C\Delta t\sum_{n=1}^{m}\|e_h^{u^n}\|_{\mathcal{T}_h}^2, 
		\end{align*}
		where $ \delta\in (0, 1) $ is a parameter to be specified later.  For the nonlinear term,  note that 
			\begin{align*}
				F(u_h^n, u_h^{n-1})-f(u^n)&=(u_h^n)^3-(u^n)^3+u^n-u_h^{n-1}\\
				&=(u_h^n-u^n)[(u_h^n)^2+u_h^nu^n+(u^n)^2]+u^n-u_h^n+u_h^n-u_h^{n-1}\\
				&=(u_h^n-u^n)[(u_h^n)^2+u_h^nu^n+(u^n)^2-1]+\Delta t\partial_{t}^+u_h^n. 
			\end{align*}
		
		By \eqref{uh_infty_bounded} in \cref{corEnergy} and $ \|u^n\|_{L^{\infty}(\Omega)}\le C $,  we have
		\begin{align*}
			|F(u_h^n, u_h^{n-1})-f(u^n)|\le C|u_h^n-u^n|+\Delta t|\partial_{t}^+u_h^n|.
		\end{align*}
		Using this bound,  we estimate $ R_3 $ as follows:
		\begin{align*}
			R_3&\le \frac{\Delta t}{Pe}\sum_{n=1}^{m}[C\|u_h^n-u^n\|_{\mathcal{T}_h}+\Delta t\|\partial_{t}^+u_h^n\|_{\mathcal{T}_h}]\|e_h^{\phi^n}\|_{\mathcal{T}_h}\\
			&\le \frac{\Delta t}{Pe}\sum_{n=1}^{m}[C\|u_{Ih}^n-u^n\|_{\mathcal{T}_h}
			+C\|e_h^{u^n}\|_{\mathcal{T}_h}
			+\Delta t\|\partial_{t}^+u_h^n\|_{\mathcal{T}_h}]\|e_h^{\phi^n}\|_{\mathcal{T}_h}\\ 
			&\le Ch^{2k+4}+C\Delta t\sum_{n=1}^{m}\|e_h^{u^n}\|_{\mathcal{T}_h}^2
			+C\Delta t^3\sum_{n=1}^{m}\|\partial_{t}^+u_h^n\|_{\mathcal{T}_h}^2+\frac{\delta\Delta t}{Pe}\sum_{n=1}^{m}\|e_h^{\phi^n}\|_{\mathcal{T}_h}^2.
		\end{align*}
		Applying the negative norm error estimate from \eqref{negative-norm-error-estimate} and \Cref{error-term-B},  we bound $R_4$ as
		\begin{align*}
			R_4&\le C\epsilon^2\Delta t\sum_{n=1}^{m}(h^{k+2}+\|\Delta t\partial_{t}^+\Pi_{k+1}^ou^n\|_{\mathcal{T}_h}+\interleave(e_h^{\bm{q}^{n-1}}, e_h^{u^{n-1}}, e_h^{\widehat{u}^{n-1}})\interleave)\interleave(e_h^{\bm{q}^{n}}, e_h^{u^{n}}, e_h^{\widehat{u}^{n}})\interleave\\
			&\le Ch^{2k+4}+C\Delta t^3\sum_{n=1}^{m}\|\partial_{t}^+\Pi_{k+1}^ou^n\|_{\mathcal{T}_h}^2
			+C\Delta t\sum_{n=1}^{m}\interleave(e_h^{\bm{q}^n}, e_h^{u^n}, e_h^{\widehat{u}^n})\interleave^2\\
			&\le  Ch^{2k+4}+C\Delta t^2+C\Delta t^3\sum_{n=1}^{m}\|\partial_{t}^+\Pi_{k+1}^ou^n\|_{\mathcal{T}_h}^2.
		\end{align*}
		
		Combining the estimates for $ \{R_i\}_{i=1}^4 $ and utilizing the negative norm estimate in \eqref{negative-norm-error-estimate},  we derive
		\begin{gather}\label{scalar-estimate-summation}
			\begin{split}
				&\frac{\epsilon^2}{2}\|e_h^{u^n}\|_{\mathcal{T}_h}^2+\frac{\epsilon^2}{2}\sum_{n=1}^{m}\|e_h^{u^n}-e_h^{u^{n-1}}\|_{\mathcal{T}_h}^2+\frac{(1-2\delta)\Delta t}{Pe}\sum_{n=1}^{m}\|e_h^{\phi^n}\|_{\mathcal{T}_h}^2\\
				&\qquad\le Ch^{2k+4}+C\Delta t\sum_{n=1}^{m}\|\partial_{t}^+u_{Ih}^n-\partial_{t}u^n\|_{\mathcal{T}_h}^2+C\Delta t\sum_{n=1}^{m}\|e_h^{u^n}\|_{\mathcal{T}_h}^2\\
				&\qquad\quad
				+C\Delta t^3\sum_{n=1}^{m}(\|\partial_{t}^+u_h^n\|_{\mathcal{T}_h}^2+\|\partial_{t}^+\Pi_{k+1}^ou^n\|_{\mathcal{T}_h}^2)
				+\frac{\epsilon^2}{2}\|e_h^{u^0}\|_{\mathcal{T}_h}^2+ {C\Delta t\sum_{n=1}^{m}\|\phi^n-\phi_{Ih}^n\|_{\mathcal{T}_h}^2.}\\
				&\qquad\le Ch^{2k+4}+C\Delta t\sum_{n=1}^{m}\|\partial_{t}^+u_{Ih}^n-\partial_{t}u^n\|_{\mathcal{T}_h}^2+C\Delta t\sum_{n=1}^{m}\|e_h^{u^n}\|_{\mathcal{T}_h}^2\\
				&\qquad\quad {+C\Delta t^3\sum_{n=1}^{m}(\|\partial_{t}^+u_h^n\|_{\mathcal{T}_h}^2+\|\partial_{t}^+\Pi_{k+1}^ou^n\|_{\mathcal{T}_h}^2)}+\frac{\epsilon^2}{2}\|e_h^{u^0}\|_{\mathcal{T}_h}^2, 
			\end{split}
		\end{gather}
		here we use the HDG elliptic projection approximation.
		
		By the HDG Sobolev embedding inequality \eqref{sobolev-02},  the bound \eqref{es_u},  and the negative norm error estimate \eqref{negative-norm-error-estimate},  we obtain
		\begin{align*}
			\Delta t\sum_{n=1}^{m}\|e_h^{u^n}\|_{\mathcal{T}_h}^2&\le C\Delta t\sum_{n=1}^{m}(\|e_h^{\bm{q}^n}\|_{\mathcal{T}_h}^2+\alpha\|h_{\tau}^{-1/2}(\Pi_k^{\partial}e_h^{u^n}-e_h^{\widehat{u}^n})\|_{\partial\mathcal{T}_h}^2)\le C(h^{2k+4}+\Delta t^2).
		\end{align*}
		Applying the estimate in \eqref{temp-2-06},  \eqref{temp-2-07},  we obtain
		\begin{align*}
			&C\Delta t\sum_{n=1}^{m}\|\partial_t^+u_{Ih}^n-\partial_{t}u^n\|_{\mathcal{T}_h}^2
			+C\Delta t^3\sum_{n=1}^{m}(\|\partial_{t}^+u_h^n\|_{\mathcal{T}_h}^2+\|\partial_{t}^+\Pi_{k+1}^ou^n\|_{\mathcal{T}_h}^2)\\
			&\qquad\le C\int_{0}^{T}\|\partial_t(u_{Ih}-u)\|_{L^2(\Omega)}^2\;{\rm d}t
			+C\Delta t^2\int_{0}^{T}\|\partial_{tt}u\|_{L^2(\Omega)}^2\;{\rm d}t\\
			&\qquad\quad+C\Delta t^3\sum_{n=1}^{m}\|\partial_{t}^+u_h^n\|_{\mathcal{T}_h}^2
			+C\Delta t^2\|\partial_t\Pi_{k+1}^ou\|_{L^2(0, T;L^2(\Omega))}^2.
		\end{align*}
		Substituting these estimates into \eqref{scalar-estimate-summation},  choosing $ \delta\in(0, 1/4) $,     applying stability result in \eqref{phi_infty-1}   and  triangle inequality,  we establish the optimal error approximation of  $ u $ and $ \phi $ in the $ L^2 $ norm.
	\end{proof}
	
	\begin{remark}\label{scalar-variables-error-resultremark}
		Similar to \Cref{negative-norm-remark},  the optimal convergence order is lost for $k=0$ when the operator $\mathcal B$ is used in the form of \eqref{def_B_1}. This loss arises from two factors: one due to the negative norm estimate in \Cref{lemma-negative-norm},  and the other from the HDG elliptic projection \eqref{error-phi-projection}.
	\end{remark}
	
	Finally,  we analyze the error estimates for the flux variables. The following lemma establishes the corresponding bounds.
	\begin{lemma}\label{lem-err-flux} 
		Suppose the solution $(\bm p,  \phi,  \bm q,  u)$ attains the maximum regularity required for the best approximation results in \eqref{classical_ine}.
		For any $h,  \Delta t$ and $m=1\ldots N$,   the flux variables satisfy the following error bounds
		\begin{align} \label{Final-flux}
			\max_{1\le n\le m} \|\bm q^n -\bm q_h^n\|_{L^2(\Omega)}^2+\Delta t \sum_{n=1}^m \|\bm p^n -\bm p_h^n\|_{L^2(\Omega)}^2  \le  C (h^{k+1}+\Delta t)^2.
		\end{align}   
	\end{lemma}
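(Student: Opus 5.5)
By the triangle inequality and the projection bounds \eqref{error-q-projection} and \eqref{error-p-projection}, it is enough to show
\[
\max_{n\le m}\interleave(e_h^{\bm q^n},e_h^{u^n},e_h^{\widehat u^n})\interleave^2+\Delta t\sum_{n\le m}\interleave(e_h^{\bm p^n},e_h^{\phi^n},e_h^{\widehat\phi^n})\interleave^2\le C(h^{k+1}+\Delta t)^2 .
\]
I would argue by energy estimates on the error equations \eqref{error_01}--\eqref{error_02}, modelled on the stability proof of \Cref{corEnergy}.

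First I take $(\bm r_1,w_1,\mu_1)=(e_h^{\bm p^n},e_h^{\phi^n},e_h^{\widehat\phi^n})$ in \eqref{error_01}. This gives
\[
(\partial_t^+e_h^{u^n},e_h^{\phi^n})_{\mathcal T_h}+\frac{1}{Pe}\interleave(e_h^{\bm p^n},e_h^{\phi^n},e_h^{\widehat\phi^n})\interleave^2 ,
\]
with the consistency term and the $\mathcal B$-difference on the right-hand side. Next I apply $\partial_t^+$ to \eqref{error_02}, which is legitimate because $\mathcal A$ is linear and time independent. In this differentiated equation I test with $(\bm r_2,w_2,\mu_2)=(e_h^{\bm q^n},0,0)$, and in the undifferentiated \eqref{error_02} I test with $(\bm 0,\partial_t^+e_h^{u^n},\partial_t^+e_h^{\widehat u^n})$. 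Adding the two, exactly as in the derivation of \eqref{sta-01}, produces
\[
\frac{\epsilon^2}{2}\partial_t^+\interleave(e_h^{\bm q^n},e_h^{u^n},e_h^{\widehat u^n})\interleave^2 + \text{(nonnegative increment terms)} - (e_h^{\phi^n},\partial_t^+e_h^{u^n})_{\mathcal T_h}
\]
on the left, and on the right the terms $(\phi^n-\phi_{Ih}^n+F(u_h^n,u_h^{n-1})-f(u^n),\partial_t^+e_h^{u^n})_{\mathcal T_h}$. Summing the two identities cancels the coupling terms $(\partial_t^+e_h^{u^n},e_h^{\phi^n})_{\mathcal T_h}$.

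The right-hand sides are handled as follows. The $\mathcal B$-difference is bounded by \eqref{B-error-bounded-2}, since $\mathcal A(e_h^{\bm p^n},e_h^{\phi^n},e_h^{\widehat\phi^n};\bm r,0,0)=0$. This gives a factor $\interleave e_{\phi}^n\interleave$ times $h^{k+2}+\|\Delta t\partial_t^+\Pi^o_{k+1}u^n\|_{\mathcal T_h}+\|e_h^{u^{n-1}}\|_{\mathcal T_h}+h\interleave e_u^{n-1}\interleave$. Young's inequality absorbs the $\interleave e_\phi^n\interleave^2$ part, and what remains is controlled by \Cref{scalar-variables-error-result} and by a Gronwall term. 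The term $(\partial_t^+u_{Ih}^n-\partial_tu^n,e_h^{\phi^n})_{\mathcal T_h}$ is treated by writing $e_h^{\phi^n}=(e_h^{\phi^n}-\bar\gamma^n)+\bar\gamma^n$. The mean-free part is bounded by $C\interleave e_\phi^n\interleave$ using \eqref{sobolev-02} and \eqref{es_u}. For the constant part, $(\partial_t^+ e_h^{u^n},1)=0$ because mass is conserved (test \eqref{hdg01} with $w_1=1$ and use \eqref{HDG-projection}); this reduces the constant contribution to $\bar\gamma^n(\partial_t^+u_{Ih}^n-\partial_tu^n,1)_{\mathcal T_h}$, which equals $\bar\gamma^n(\partial_t^+u^n-\partial_tu^n,1)_{\mathcal T_h}$ because $u_{Ih}$ preserves the mean of $u$, and the mean is bounded via \Cref{scalar-variables-error-result}. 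The bound is then completed with \eqref{temp-2-06}.

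The main obstacle is the set of terms tested against $\partial_t^+e_h^{u^n}$, namely $(\phi^n-\phi^n_{Ih},\partial_t^+e_h^{u^n})_{\mathcal T_h}$ and the nonlinear term. I would sum these by parts in time (discrete Abel summation), moving $\partial_t^+$ onto $\phi-\phi_{Ih}$ and onto $F-f$. For the projection term this produces $\|\partial_t(\phi-\phi_{Ih})\|$, which is $O(h^{k+2})$ by the time-differentiated version of \eqref{error-phi-projection}, together with the endpoint term $(\phi^m-\phi^m_{Ih},e_h^{u^m})_{\mathcal T_h}$, controlled by \Cref{scalar-variables-error-result}. For the nonlinear term, $\partial_t^+[(u_h^n)^3-(u^n)^3]$ is expanded as in \Cref{corEnergy}. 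This uses the $L^\infty$ bounds \eqref{uh_infty_bounded} and \eqref{u_Ih_infty_bound}, the bound $\Delta t\sum\|\partial_t^+u_h^n\|^2\le C$ from \eqref{phi_infty-1}, the HDG Sobolev inequality \eqref{sobolev-01} to put $\|e_h^{u^n}\|_{L^6}$ in terms of $\interleave e_u^n\interleave$, and the already proven $L^2$ estimate \eqref{scalar-error}. The leftover pieces are either of order $(h^{k+1}+\Delta t)^2$, which only needs the weaker rate, or multiples of $\Delta t\sum\interleave e_u^n\interleave^2$, which Gronwall's inequality absorbs. The endpoint terms are absorbed into $\frac{\epsilon^2}{4}\interleave e_u^m\interleave^2$. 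Finally, the initial error $\interleave e_u^0\interleave$ is $O(h^{k+1})$ by \eqref{hdg03} and \eqref{error-q-projection}.
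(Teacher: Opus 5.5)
Your skeleton is the paper's: the same three test functions, the same cancellation of $(\partial_t^+e_h^{u^n},e_h^{\phi^n})_{\mathcal T_h}$, and the same use of \eqref{B-error-bounded-2}. Two of your deviations are sound. Splitting off the mean of $e_h^{\phi^n}$ is more careful than the paper, which applies \eqref{sobolev-02} to $e_h^{\phi^n}$ even though it need not have zero mean. Abel summation for $(\phi^n-\phi_{Ih}^n,\partial_t^+e_h^{u^n})_{\mathcal T_h}$ is a legitimate substitute for the paper's device of testing \eqref{error_01} with $(\bm 0,\varepsilon_h^{\phi^n},0)$. It needs $\|\partial_t(\phi-\phi_{Ih})\|_{\mathcal T_h}=O(h^{k+2})$, which holds because \eqref{HDG-projection} is linear and $\bm\beta$ is time independent.

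The gap is the nonlinear term, which you correctly flag but do not resolve. After Abel summation you must bound $\Delta t\sum_n(\partial_t^+[(u_h)^3-u^3]^{n+1},e_h^{u^n})_{\mathcal T_h}$. Here $\partial_t^+[(u_h)^3-u^3]^{n+1}$ contains $\eta_h^{n+1}\,\partial_t^+(u_h-u)^{n+1}$, with $\eta_h^{n+1}=(u_h^{n+1})^2+u_h^{n+1}u_h^n+(u_h^n)^2$. The tools you list are the $L^\infty$ bounds and $\Delta t\sum_n\|\partial_t^+u_h^n\|_{\mathcal T_h}^2\le C$. With them this term is only bounded by $C(\Delta t\sum_n\|\partial_t^+u_h^n\|^2_{\mathcal T_h})^{1/2}(\Delta t\sum_n\|e_h^{u^n}\|^2_{\mathcal T_h})^{1/2}\le C(h^{k+2}+\Delta t)$. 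That is first order in $\Delta t$ and cannot give $(h^{k+1}+\Delta t)^2$.

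To gain the missing power you must write $\partial_t^+(u_h-u)=\partial_t^+(u_{Ih}-u)-\partial_t^+e_h^{u}$. This brings back $\sum_n(\eta_h^{n+1}(e_h^{u^{n+1}}-e_h^{u^n}),e_h^{u^n})_{\mathcal T_h}$, which is exactly the structure you wanted to avoid. It is neither $O((h^{k+1}+\Delta t)^2)$ nor a multiple of $\Delta t\sum_n\interleave(e_h^{\bm q^n},e_h^{u^n},e_h^{\widehat u^n})\interleave^2$. Cauchy--Schwarz combined with $\sum_n\|e_h^{u^n}-e_h^{u^{n-1}}\|^2_{\mathcal T_h}\le C(h^{k+2}+\Delta t)^2$ loses a factor $\Delta t^{-1/2}$. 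So the summation by parts in time only relocates the difficulty.

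The missing idea is the weighted identity $2(\xi(a-b),a)_{\mathcal T_h}=(\xi,a^2-b^2)_{\mathcal T_h}+(\xi,(a-b)^2)_{\mathcal T_h}$.
\begin{itemize}
\item Apply it directly to $-(\xi_h^ne_h^{u^n},e_h^{u^n}-e_h^{u^{n-1}})_{\mathcal T_h}$, using $\xi_h^n\ge0$ from \eqref{xi_hn}. No Abel summation is then needed, and both $-(\xi_h^n,(e_h^{u^n}-e_h^{u^{n-1}})^2)_{\mathcal T_h}$ and $-(\xi_h^m,(e_h^{u^m})^2)_{\mathcal T_h}$ have a favorable sign.
\item Sum by parts onto the weight. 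What remains is $\frac12\sum_n(\xi_h^{n+1}-\xi_h^n,(e_h^{u^n})^2)_{\mathcal T_h}\le C\Delta t\sum_n(\|\partial_t^+u_h^{n+1}\|_{\mathcal T_h}+\|\partial_t^+u_{Ih}^{n+1}\|_{\mathcal T_h})\|e_h^{u^n}\|_{L^4}^2$.
\item Use $\|e\|_{L^4}^2\le\|e\|^{1/2}_{L^2}\|e\|_{L^6}^{3/2}$ together with \eqref{sobolev-02}, \eqref{es_u}, \Cref{scalar-variables-error-result} and \eqref{negative-norm-error-estimate}. The remainder is at most $C(h^{k+2}+\Delta t)^{3/2}\max_n\interleave(e_h^{\bm q^n},e_h^{u^n},e_h^{\widehat u^n})\interleave^{1/2}$, which Young's inequality absorbs into the left-hand side.
\end{itemize}
Be aware that the paper's own bound for this term ($R_{41}$) is not valid as written either. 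It pulls $\max_n\|\xi_h^n\|_{L^\infty(\Omega)}$ outside $|\sum_n(\cdot,\cdot)_{\mathcal T_h}|$ before telescoping. So this identity is the ingredient both arguments actually need.
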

	\begin{proof}
		Substituting  $ (\bm{r}_1, w_1, \mu_1)= (e_h^{\bm{p}^n}, e_h^{\phi^n}, e_h^{\widehat{\phi}^n})$ into \eqref{error_01},  we obtain
		\begin{gather}\label{err-temp1}
			\begin{split}
				&(\partial_{t}^+e_h^{u^n}, e_h^{\phi^n})_{\mathcal{T}_h}+\frac{1}{Pe}\interleave(e_h^{\bm{p}^n}, e_h^{\phi^n}, e_h^{\widehat{\phi}^n}) \interleave^2
				=(\partial_{t}^+u_{Ih}^n-\partial_tu^n, e_h^{\phi^n})_{\mathcal{T}_h}\\
				&\quad+\mathcal{B}(u_h^{n-1}, \widehat{u}_h^{n-1};e_h^{\phi^n})
				-\mathcal{B}(u_{Ih}^n, \widehat{u}_{Ih}^n;e_h^{\phi^n})
				.
			\end{split}
		\end{gather}
		Next,  applying  $ \partial_{t}^+ $ to \eqref{error_02} and selecting $ (\bm{r}_2, w_2, \mu_2)= (e_h^{\bm{q}^n}, 0, 0)$,  we leverage the definition of $ \mathcal{A} $ to derive
		\begin{align}\label{err-temp2}
			\epsilon^2(\partial_{t}^+e_h^{\bm q^n}, e_h^{\bm{q}^n})_{\mathcal{T}_h}-\epsilon^2(\partial_t^+e_h^{u^n}, \nabla\cdot e_h^{\bm q^n})_{\mathcal{T}_h}+\epsilon^2\langle \partial_t^+e_h^{\widehat{u}^n}, e_h^{\bm{q}^n}\cdot \bm{n} \rangle_{\partial \mathcal{T}_h}=0.
		\end{align}
		Taking  $ (\bm{r}_2, w_2, \mu_2)= ({\bm 0}, \partial_{t}^+e_h^{u^n}, \partial_{t}^+e_h^{\widehat{u}^n})$ in \eqref{error_02} leads to
		\begin{gather}\label{err-temp3}
			\begin{split}
				&\epsilon^2\mathcal{A}(e_h^{\bm{q}^n}, e_h^{u^n}, e_h^{\widehat{u}^n};{\bm 0}, \partial_t^+e_h^{u^n}, \partial_{t}^+e_h^{\widehat{u}^n})
				-(e_h^{\phi^n}, \partial_{t}^+e_h^{u^n})_{\mathcal{T}_h}
				=(\phi^n-\phi_{Ih}^n, \partial_{t}^+e_h^{u^n})_{\mathcal{T}_h}\\
				&+(F(u_h^n, u_h^{n-1})-f(u^n), \partial_{t}^+e_h^{u^n})_{\mathcal{T}_h}.
			\end{split}
		\end{gather}
		Summing \eqref{err-temp1},  \eqref{err-temp2},  and \eqref{err-temp3},  we obtain
		\begin{gather}\label{error-eq-flux-temp1}
			\begin{split}
				&\frac{1}{Pe}\interleave(e_h^{\bm{p}^n}, e_h^{\phi^n}, e_h^{\widehat{\phi}^n}) \interleave^2
				+\epsilon^2(\partial_{t}^+e_h^{\bm q^n}, e_h^{\bm{q}^n})_{\mathcal{T}_h}\\
				&\quad
				+\alpha\epsilon^2\langle h_{\tau}^{-1}(\Pi_k^{\partial}e_h^{u^n}-e_h^{\widehat{u}^n}), \partial_{t}^+(\Pi_k^{\partial}e_h^{u^n}-e_h^{\widehat u ^n})\rangle_{\partial \mathcal{T}_h}\\
				&=(\partial_{t}^+u_{Ih}^n-\partial_tu^n, e_h^{\phi^n})_{\mathcal{T}_h}
				+\mathcal{B}(u_h^{n-1}, \widehat{u}_h^{n-1};e_h^{\phi^n})
				-\mathcal{B}(u_{Ih}^n, \widehat{u}_{Ih}^n;e_h^{\phi^n})
				\\
				&\quad +(\phi^n-\phi_{Ih}^n, \partial_{t}^+e_h^{u^n})_{\mathcal{T}_h}
				+(F(u_h^n, u_h^{n-1})-f(u^n), \partial_{t}^+e_h^{u^n})_{\mathcal{T}_h}.
			\end{split}
		\end{gather}
		Multiplying \eqref{error-eq-flux-temp1} by $ \Delta t $ and summing over $ n $ from $ 1 $ to $ m $,  we obtain
		\begin{gather}\label{error-eq-flux-summation}
			\begin{split}
				&\frac{\Delta t}{Pe}\sum_{n=1}^{m}\interleave(e_h^{\bm{p}^n}, e_h^{\phi^n}, e_h^{\widehat{\phi}^n}) \interleave^2
				+\frac{\epsilon^2}{2}\|e_h^{\bm{q}^m}\|_{\mathcal{T}_h}^2
				-\frac{\epsilon^2}{2}\|e_h^{\bm{q}^0}\|_{\mathcal{T}_h}^2
				+\frac{\epsilon^2}{2}\sum_{n=1}^{m}\|e_h^{\bm{q}^n}-e_h^{\bm{q}^{n-1}}\|_{\mathcal{T}_h}^2\\
				&+\frac{\alpha\epsilon^2}{2}\| h_{\tau}^{-1/2}(\Pi_k^{\partial}e_h^{u^m}-e_h^{\widehat{u}^m})\|_{\partial\mathcal{T}_h}^2
				-\frac{\alpha\epsilon^2}{2}\| h_{\tau}^{-1/2}(\Pi_k^{\partial}e_h^{u^0}-e_h^{\widehat{u}^0})\|_{\partial\mathcal{T}_h}^2\\
				&+\frac{\alpha\epsilon^2}{2}\sum_{n=1}^{m}\| h_{\tau}^{-1/2}\Delta t\partial_{t}^+(\Pi_k^{\partial}e_h^{u^n}-e_h^{\widehat{u}^n})\|_{\partial\mathcal{T}_h}^2\\
				&=\Delta t\sum_{n=1}^{m}(\partial_{t}^+u_{Ih}^n-\partial_tu^n, e_h^{\phi^n})_{\mathcal{T}_h}
				+\Delta t\sum_{n=1}^{m}(\phi^n-\phi_{Ih}^n, \partial_{t}^+e_h^{u^n})_{\mathcal{T}_h}
				\\
				&\quad 
				-\Delta t\sum_{n=1}^{m}(\mathcal{B}(u_h^{n-1}, \widehat{u}_h^{n-1};e_h^{\phi^n})
				-\mathcal{B}(u_{Ih}^n, \widehat{u}_{Ih}^n;e_h^{\phi^n}))\\
				&\quad+\Delta t\sum_{n=1}^{m}(F(u_h^n, u_h^{n-1})-F(u_{Ih}^n, u_{Ih}^{n-1}), \partial_{t}^+e_h^{u^n})_{\mathcal{T}_h}\\
				&\quad+\Delta t\sum_{n=1}^{m}(F(u_{Ih}^n, u_{Ih}^{n-1})-f(u^n), \partial_{t}^+e_h^{u^n})_{\mathcal{T}_h}
				\\
				&=:\sum_{i=1}^{5}R_i.
			\end{split}
		\end{gather}
		
		Next, we estimate the terms $\{R_i\}_{i=1}^5$. Using the HDG embedding inequality \eqref{sobolev-02} and \eqref{es_u}, and noting that
		
			\[
			\mathcal{A}(e_h^{\bm p^n}, e_h^{\phi^n}, e_h^{\widehat{\phi}^n};\bm r_h,0,0)=0\qquad \forall\,\bm r_h\in \bm V_h,
			\]
		
		which follows from \eqref{error_01} by taking $(w_1,\mu_1)=(0,0)$, we obtain
		\begin{align*}
			R_1&\le C\Delta t\sum_{n=1}^{m}\| \partial_{t}^+ u_{Ih}^n-\partial_{t}u^n\|_{\mathcal{T}_h}\|e_h^{\phi^n}\|_{\mathcal{T}_h}\\
			& \le C\Delta t\sum_{n=1}^{m}\|\partial_{t}^+ u_{Ih}^n-\partial_{t}u^n\|_{\mathcal{T}_h}(\|\nabla e_h^{\phi^n}\|_{\mathcal{T}_h}+\alpha^{1/2}\|h_{\tau}^{-1/2}(\Pi_k^{\partial}e_h^{\phi^n}-e_h^{\widehat{\phi^n}})\|_{\partial\mathcal{T}_h})\\
			& \le C\Delta t\sum_{n=1}^{m}\|\partial_{t}^+ u_{Ih}^n-\partial_{t}u^n\|_{\mathcal{T}_h}(\| e_h^{\bm{p}^n}\|_{\mathcal{T}_h}+\alpha^{1/2}\|h_{\tau}^{-1/2}(\Pi_k^{\partial}e_h^{\phi^n}-e_h^{\widehat{\phi^n}})\|_{\partial\mathcal{T}_h})\\
			&\le C\Delta t\sum_{n=1}^{m}\|\partial_{t}^+ u_{Ih}^n-\partial_{t}u^n\|_{\mathcal{T}_h}^2
			+\frac{\delta\Delta t}{Pe}\sum_{n=1}^{m}\interleave(e_h^{\bm{p}^n}, e_h^{\phi^n}, e_h^{\widehat{\phi}^n}) \interleave^2, 
		\end{align*}
		where $ \delta $ is a positive constant to be determined later. Leveraging the approximation property of $ \mathcal{B} $ in \Cref{error-term-B} and  negative norm estimate in \eqref{negative-norm-error-estimate},  it follows 
		\begin{align*}
			R_3&\le C\Delta t\sum_{n=1}^{m}(h^{k+2}+\|\Delta t\partial_{t}^+\Pi_{k+1}^ou^n\|_{\mathcal{T}_h}+\interleave(e_h^{\bm{q}^{n-1}}, e_h^{u^{n-1}}, e_h^{\widehat{u}^{n-1}})\interleave)
			\interleave(e_h^{\bm{p}^n}, e_h^{\phi^n}, e_h^{\widehat{\phi}^n})\interleave\\
			&\le Ch^{2k+4}+C\Delta t^3\sum_{n=1}^{m}\|\partial_{t}^+\Pi_{k+1}^ou^n\|_{\mathcal{T}_h}^2
			+C\Delta t\sum_{n=1}^{m}\interleave(e_h^{\bm{q}^{n-1}}, e_h^{u^{n-1}}, e_h^{\widehat{u}^{n-1}})\interleave^2\\
			&\quad+\frac{\delta\Delta t}{Pe}\sum_{n=1}^{m}\interleave(e_h^{\bm{p}^n}, e_h^{\phi^n}, e_h^{\widehat{\phi}^n})\interleave^2\\
			&\le Ch^{2k+4}+C\Delta t^2+C\Delta t^3\sum_{n=1}^{m}\|\partial_{t}^+\Pi_{k+1}^ou^n\|_{\mathcal{T}_h}^2
			+\frac{\delta\Delta t}{Pe}\sum_{n=1}^{m}\interleave(e_h^{\bm{p}^n}, e_h^{\phi^n}, e_h^{\widehat{\phi}^n})\interleave^2.
		\end{align*}
		
		The estimates for $ R_2 $ and $ R_4 $ follow similarly,  utilizing error equation \eqref{error_01}.
		Recalling the definition $\varepsilon_h^{\phi^n}:=\varepsilon_h^{\phi}=\Pi_{k+1}^o\phi^n-\phi_{Ih}^n$ (see \eqref{error-notation-varepsilon}), we take $(\bm r_1,w_1,\mu_1)=(\bm 0,\varepsilon_h^{\phi^n},0)$ in \eqref{error_01} to obtain
		\begin{align*}
			R_2&=\Delta t\sum_{n=1}^{m}(\Pi_{k+1}^{o}\phi^n-\phi_{Ih}^n, \partial_{t}^+e_h^{u^n})_{\mathcal{T}_h}=
			\Delta t\sum_{n=1}^{m}(\varepsilon_h^{\phi^n}, \partial_{t}^+e_h^{u^n})_{\mathcal{T}_h}\\
			&=-\frac{\Delta t}{Pe}\sum_{n=1}^{m}\mathcal{A}(e_h^{\bm p^n}, e_h^{\phi^n}, e_h^{\widehat{\phi}^n};\bm{0}, \varepsilon_h^{\phi^n}, 0)
			+\Delta t\sum_{n=1}^{m}(\partial_{t}^+u_{Ih}^n-\partial_{t}u^n, \varepsilon_h^{\phi^n})_{\mathcal{T}_h}\\
			&\quad +\Delta t\sum_{n=1}^{m}(\mathcal{B}(u_h^{n-1}, \widehat{u}_h^{n-1};\varepsilon_h^{\phi^n})
			-\mathcal{B}(u_{Ih}^n, \widehat{u}_{Ih}^n;\varepsilon_h^{\phi^n}))\\
			&=:R_{21}+R_{22}+R_{23}.
		\end{align*}
		
		Using the continuity property of $\mathcal{A}$ in \eqref{A-continuous} and the approximation estimate for $\varepsilon_h^{\phi^n}$ in \eqref{appendix-vare-h-phi}, we obtain
		\begin{align*}
			R_{21}&=-\frac{\Delta t}{Pe}\sum_{n=1}^{m}\mathcal{A}(e_h^{\bm{p}^n}, e_h^{\phi^n}, e_h^{\widehat{\phi^n}};\bm{0}, \varepsilon_h^{\phi^n}, 0)\\
			&\le \frac{\Delta t}{Pe}\sum_{n=1}^{m}(\|e_h^{\bm{p}^n}\|_{\mathcal{T}_h}+\|\nabla e_h^{\phi^n}\|_{\mathcal{T}_h}+\alpha^{1/2}\|h_{\tau}^{-1/2}(\Pi_k^{\partial}e_h^{\phi^n}-e_h^{\widehat{\phi^n}})\|_{\partial\mathcal{T}_h})\\
			&\quad \times(\|\nabla \varepsilon_h^{\phi^n}\|_{\mathcal{T}_h}+\alpha^{1/2}\|h_{\tau}^{-1/2}\Pi_k^{\partial}\varepsilon_h^{\phi^n}\|_{\partial\mathcal{T}_h})\\
			&\le \frac{\Delta t}{Pe}\sum_{n=1}^{m}(\|e_h^{\bm{p}^n}\|_{\mathcal{T}_h}+\|\nabla e_h^{\phi^n}\|_{\mathcal{T}_h}+\alpha^{1/2}\|h_{\tau}^{-1/2}(\Pi_k^{\partial}e_h^{\phi^n}-e_h^{\widehat{\phi^n}})\|_{\partial\mathcal{T}_h})\\
			&\qquad\times {h^{k+1}(|\phi^n|_{k+2}+|u^n|_{k+1}+|u^n|_{k+2})}
			\\
			&\le \frac{\delta\Delta t}{Pe}\sum_{n=1}^{m}\interleave(e_h^{\bm{p}^n}, e_h^{\phi^n}, e_h^{\widehat{\phi}^n}) \interleave^2+Ch^{2k+2}.
		\end{align*}
		Using Young's inequality  and inequality \eqref{appendix-vare-h-phi} for $ R_{22} $,  we obtain
		\begin{align*}
			R_{22}&\le \Delta t\sum_{n=1}^{m}\|\partial_{t}^+u_{Ih}^n-\partial_{t}u^n\|_{\mathcal{T}_h}
			\|\varepsilon_h^{\phi^n}\|_{\mathcal{T}_h}\\
			&\le C \Delta t\sum_{n=1}^{m}\|\partial_{t}^+u_{Ih}^n-\partial_{t}u^n\|_{\mathcal{T}_h}
			{h^{k+2}(|\phi^n|_{k+2}+|u^n|_{k+1}+|u^n|_{k+2})}\\
			&\le C\Delta t\sum_{n=1}^{m}\|\partial_{t}^+u_{Ih}^n-\partial_{t}u^n\|_{\mathcal{T}_h}^2
			+Ch^{2k+4}.
		\end{align*}
		Applying the projection error estimate for $ \varepsilon_h^{\phi^n} $,   the negative norm estimate \eqref{negative-norm-error-estimate} and approximation property of $ \mathcal{B} $ in \eqref{B-error-bounded},  we obtain
		\begin{align*}
			R_{23}&\le C\Delta t\sum_{n=1}^{m}(h^{k+2}+\|\Delta t\partial_{t}^+\Pi_{k+1}^ou^n\|_{\mathcal{T}_h}
			+\interleave(e_h^{\bm{q}^{n-1}}, e_h^{u^{n-1}}, e_h^{\widehat{u}^{n-1}})\interleave)\\
			&\quad\times(\|\nabla\varepsilon_h^{\phi^n}\|_{\mathcal{T}_h}+\|h_{\tau}^{-1/2}\Pi_k^{\partial}\varepsilon_h^{\phi^n}\|_{\partial\mathcal{T}_h})\\
			&\le Ch^{2k+4}+C\Delta t^3\sum_{n=1}^{m}\|\partial_t^+\Pi_{k+1}^ou^n\|_{\mathcal{T}_h}+C\Delta t\sum_{n=1}^{m}\interleave(e_h^{\bm{q}^{n-1}}, e_h^{u^{n-1}}, e_h^{\widehat{u}^{n-1}})\interleave^2+Ch^{2k+2}\\
			&\le Ch^{2k+2}+C\Delta t^2+C\Delta t^3\sum_{n=1}^{m}\|\partial_t^+\Pi_{k+1}^ou^n\|_{\mathcal{T}_h}^2.
		\end{align*}
		Combining the above estimates for $ R_{21} $ to $ R_{23} $,  we obtain
		\begin{align*}
			R_2&\le \frac{\delta\Delta t}{Pe}\sum_{n=1}^{m}\interleave (e_h^{\bm{p}^n}, e_h^{\phi^n}, e_h^{\widehat{\phi}^^n})\interleave^2
			+Ch^{2k+2}+C\Delta t\sum_{n=1}^{m}\|\partial_{t}^+u_{Ih}^n-\partial_{t}u^n\|_{\mathcal{T}_h}^2\\
			&\quad+C\Delta t^2
			+C\Delta t^3\sum_{n=1}^{m}\|\partial_t^+\Pi_{k+1}^ou^n\|_{\mathcal{T}_h}^2.
		\end{align*}
		
		{
			Next, we estimate $R_4$. By direct computation, we have 
			\begin{align*}
				R_{4}
				&=\Delta t\sum_{n=1}^{m}\Bigl((u_h^n)^3-(u_{Ih}^n)^3+u_{Ih}^{n-1}-u_h^{n-1},\,\partial_{t}^+e_h^{u^n}\Bigr)_{\mathcal{T}_h}\\
				&=\Delta t\sum_{n=1}^{m}\Bigl((u_h^n-u_{Ih}^n)\bigl((u_h^n)^2+u_h^n u_{Ih}^n+(u_{Ih}^n)^2\bigr),\,\partial_{t}^+e_h^{u^n}\Bigr)_{\mathcal{T}_h}
				\\
				&\quad +\Delta t\sum_{n=1}^{m}\Bigl(u_{Ih}^{n-1}-u_h^{n-1},\,\partial_{t}^+e_h^{u^n}\Bigr)_{\mathcal{T}_h}\\
				&=:R_{41}+R_{42}.
			\end{align*}			
			Recalling the definition of $\xi_h^n$ in \eqref{xi_hn} and using the uniform bounds \eqref{u_Ih_infty_bound} and
			\eqref{uh_infty_bounded}, together with the elementary identity
			$a(a-b)=\tfrac12\bigl(a^2-b^2+(a-b)^2\bigr)$, we obtain
			\begin{align*}
				|R_{41}|
				&\le C\Delta t\max_{1\le n\le m}\|\xi_h^n\|_{L^{\infty}(\Omega)}
				\left|\sum_{n=1}^{m}\bigl(u_h^n-u_{Ih}^n,\,\partial_{t}^+e_h^{u^n}\bigr)_{\mathcal{T}_h}\right|\\
				&\le C\Delta t\left|\sum_{n=1}^{m}\bigl(e_h^{u^n},\,\partial_{t}^+e_h^{u^n}\bigr)_{\mathcal{T}_h}\right|\\
				&=\frac{C}{2}\left|\sum_{n=1}^{m}\Bigl(\|e_h^{u^{n}}\|_{\mathcal{T}_h}^2-\|e_h^{u^{n-1}}\|_{\mathcal{T}_h}^2
				+\|e_h^{u^{n}}-e_h^{u^{n-1}}\|_{\mathcal{T}_h}^2\Bigr)\right|\\
				&=\frac{C}{2}\left|\|e_h^{u^{m}}\|_{\mathcal{T}_h}^2-\|e_h^{u^{0}}\|_{\mathcal{T}_h}^2
				+\sum_{n=1}^{m}\|e_h^{u^{n}}-e_h^{u^{n-1}}\|_{\mathcal{T}_h}^2\right|\\
				&\le \frac{C}{2}\Bigl(\|e_h^{u^{m}}\|_{\mathcal{T}_h}^2+\|e_h^{u^{0}}\|_{\mathcal{T}_h}^2
				+\sum_{n=1}^{m}\|e_h^{u^{n}}-e_h^{u^{n-1}}\|_{\mathcal{T}_h}^2\Bigr)\\
				&\le C\bigl(h^{2k+4}+\Delta t^2\bigr),
			\end{align*}
			where the last inequality follows from \Cref{scalar-variables-error-result}. Likewise,   we have
			\begin{align*}
				|R_{42}|
				&=\Delta t\left|\sum_{n=1}^{m}\bigl(u_{Ih}^{n-1}-u_h^{n-1},\,\partial_{t}^+e_h^{u^n}\bigr)_{\mathcal{T}_h}\right|
				=\Delta t\left|\sum_{n=1}^{m}\bigl(e_h^{u^{n-1}},\,\partial_{t}^+e_h^{u^n}\bigr)_{\mathcal{T}_h}\right|\\
				&=\frac{1}{2}\left|\sum_{n=1}^{m}\Bigl(\|e_h^{u^n}\|_{\mathcal{T}_h}^2-\|e_h^{u^{n-1}}\|_{\mathcal{T}_h}^2
				-\|e_h^{u^n}-e_h^{u^{n-1}}\|_{\mathcal{T}_h}^2\Bigr)\right|\\
				&=\frac{1}{2}\left|\|e_h^{u^{m}}\|_{\mathcal{T}_h}^2-\|e_h^{u^{0}}\|_{\mathcal{T}_h}^2
				-\sum_{n=1}^{m}\|e_h^{u^{n}}-e_h^{u^{n-1}}\|_{\mathcal{T}_h}^2\right|\\
				&\le C\bigl(h^{2k+4}+\Delta t^2\bigr).
			\end{align*}
			
			Combining the bounds for $R_{41}$ and $R_{42}$ yields
			\[
			|R_4|\le C\bigl(h^{2k+4}+\Delta t^2\bigr).
			\]}
		
		To simply notation,  for $ R_5 $,  let
		\begin{align*}
			Z^n=F(u_{Ih}^n, u_{Ih}^{n-1})-f(u^n), 
		\end{align*}
		and applying the error equation \eqref{error_01},  we rewrite $ R_5 $ as:
		\begin{align*}
			R_5&=\Delta t\sum_{n=1}^{m}(\Pi_{k+1}^oZ^n, \partial_{t}^+e_h^{u^n})\\
			&=-\frac{\Delta t}{Pe}\sum_{n=1}^{m}\mathcal{A}(e_h^{\bm{p}^n}, e_h^{\phi^n}, e_h^{\widehat{\phi}^n};\bm{0}, \Pi_{k+1}^oZ^n, \Pi_k^{\partial}Z^n)
			+\Delta t\sum_{n=1}^{m}(\partial_t^+u_{Ih}^n-\partial_{t}u^n, \Pi_{k+1}^oZ^n)_{\mathcal{T}_h}\\
			&\quad +\Delta t\sum_{n=1}^{m}(\mathcal{B}(u_h^{n-1}, \widehat{u}_h^{n-1};\Pi_{k+1}^oZ^n)
			-\mathcal{B}(u_{Ih}^n, \widehat{u}_{Ih}^n;\Pi_{k+1}^oZ^n))
			\\
			&=:R_{51}+R_{52}+R_{53}.
		\end{align*}
		Using the same techniques applied to estimate $ R_{51} $ and $ R_{52} $,  we derive
		\begin{align*}
			R_{51}&\le \frac{\Delta t}{Pe}\sum_{n=1}^{m}\interleave (e_h^{\bm{p}^n}, e_h^{\phi^n}, e_h^{\widehat{\phi^n}})\interleave(\|\nabla\Pi_{k+1}^oZ^n\|_{\mathcal{T}_h}+\alpha^{1/2}\|h_{\tau}^{-1/2}(\Pi_k^{\partial}\Pi_{k+1}^oZ^n-\Pi_k^{\partial}Z^n)\|_{\partial\mathcal{T}_h})\\
			&\le \frac{\delta\Delta t}{Pe}\sum_{n=1}^{m}\interleave(e_h^{\bm{p}^n}, e_h^{\phi^n}, e_h^{\widehat{\phi^n}}) \interleave^2+C\Delta t\sum_{n=1}^{m}(\|\nabla\Pi_{k+1}^oZ^n\|_{\mathcal{T}_h}^2+\|\nabla Z^n\|_{\mathcal{T}_h}^2)
			, \\
			R_{52}&\le C\Delta t\sum_{n=1}^{m}\|\partial_{t}^+u_{Ih}^n-\partial_{t}u^n\|_{\mathcal{T}_h}
			\|\Pi_{k+1}^oZ^n\|_{\mathcal{T}_h}\\
			&\le C\Delta t\sum_{n=1}^{m}\|\partial_{t}^+u_{Ih}^n-\partial_{t}u^n\|_{\mathcal{T}_h}^2
			+C\Delta t\sum_{n=1}^{m}	\|\Pi_{k+1}^oZ^n\|_{\mathcal{T}_h}^2.
		\end{align*}
		As for the term $ R_{53} $,  by using the inequality \eqref{B-error-bounded} and  estimate \eqref{negative-norm-error-estimate},  one has
			\begin{align*}
				R_{53}&\le C\Delta t\sum_{n=1}^{m}(h^{k+2}+\|\Delta t\partial_{t}^+\Pi_{k+1}^ou^n\|_{\mathcal{T}_h}+\interleave(e_h^{\bm{q}^{n-1}}, e_h^{u^{n-1}}, e_h^{\widehat{u}^{n-1}})\interleave)\\
				&\quad \times (\|\nabla \Pi_{k+1}^oZ^n\|_{\mathcal{T}_h}+\|h_{\tau}^{-1/2}(\Pi_{k+1}^oZ^n-\Pi_{k}^{\partial }Z^n)\|_{\partial\mathcal{T}_h})\\
				&\le C\Delta t\sum_{n=1}^{m}(h^{k+2}
				+\|\Delta t\partial_{t}^+\Pi_{k+1}^ou^n\|_{\mathcal{T}_h}
				+\interleave(e_h^{\bm{q}^{n-1}}, e_h^{u^{n-1}}, e_h^{\widehat{u}^{n-1}})\interleave) \times (\|\nabla \Pi_{k+1}^oZ^n\|_{\mathcal{T}_h}+\|\nabla Z^n\|_{\mathcal{T}_h})\\
				&\le Ch^{2k+4}+C\Delta t^2+C\Delta t^3\sum_{n=1}^{m}\|\partial_{t}^+\Pi_{k+1}^ou^n\|_{\mathcal{T}_h}^2
				+C\Delta t^2(\|\nabla\Pi_{k+1}^oZ^n\|_{\mathcal{T}_h}^2+\|\nabla Z^n\|_{\mathcal{T}_h}^2).
			\end{align*}	
		
		Thus,  we obtain the bound
		\begin{align*}
			R_5&\le \frac{\delta\Delta t}{Pe}\sum_{n=1}^{m}\interleave (e_h^{\bm{p}^n}, e_h^{\phi^n}, e_h^{\widehat{\phi}^n})\interleave^2
			+C\Delta t\sum_{n=1}^{m}(\|\Pi_{k+1}^oZ^n\|_{\mathcal{T}_h}^2
				+\|\nabla\Pi_{k+1}^oZ^n\|_{\mathcal{T}_h}^2+\|\nabla Z^n\|_{\mathcal{T}_h}^2) \\
			&\quad+C\Delta t^2+C\Delta t\sum_{n=1}^{m}\|\partial_{t}^+u_{Ih}^n-\partial_{t}u^n\|_{\mathcal{T}_h}^2
			+Ch^{2k+4}+C\Delta t^3\sum_{n=1}^{m}\|\partial_t^+\Pi_{k+1}^ou^n\|_{\mathcal{T}_h}^2.
		\end{align*}
		Combining all estimates for $ R_1 $ to $ R_5 $,  we obtain the following  error bound:
		\begin{gather}
			\begin{split}\label{error-eq-flux-summation-substitute}
				&\frac{\Delta t(1-4\delta)}{Pe}\sum_{n=1}^{m}\interleave(e_h^{\bm{p}^n}, e_h^{\phi^n}, e_h^{\widehat{\phi}^n}) \interleave^2
				+\frac{\epsilon^2}{2}\|e_h^{\bm{q}^m}\|_{\mathcal{T}_h}^2
				+\frac{\epsilon^2}{2}\sum_{n=1}^{m}\|e_h^{\bm{q}^n}-e_h^{\bm{q}^{n-1}}\|_{\mathcal{T}_h}^2\\
				&\quad+\frac{\alpha\epsilon^2}{2}\| h_{\tau}^{-1/2}(\Pi_k^{\partial}e_h^{u^m}-e_h^{\widehat{u}^m})\|_{\partial\mathcal{T}_h}^2
				+\frac{\alpha\epsilon^2}{2}\sum_{n=1}^{m}\| h_{\tau}^{-1/2}\Delta t\partial_{t}^+(\Pi_k^{\partial}e_h^{u^n}-e_h^{\widehat{u}^n})\|_{\partial\mathcal{T}_h}^2\\
				&\le \frac{\epsilon^2}{2}\|e_h^{\bm{q}^0}\|_{\mathcal{T}_h}^2
				+\frac{\alpha\epsilon^2}{2}\| h_{\tau}^{-1/2}(\Pi_k^{\partial}e_h^{u^0}-e_h^{\widehat{u}^0})\|_{\partial\mathcal{T}_h}^2
				+C\Delta t\sum_{n=1}^{m}\|\partial_{t}^+u_{Ih}^n-\partial_{t}u^n\|_{\mathcal{T}_h}^2+Ch^{2k+2}\\
				&\quad+C\Delta t^2+
				+C\Delta t\sum_{n=1}^{m}(\|\Pi_{k+1}^oZ^n\|_{\mathcal{T}_h}^2
					+\|\nabla\Pi_{k+1}^oZ^n\|_{\mathcal{T}_h}^2+\|\nabla Z^n\|_{\mathcal{T}_h}^2) \\
				&\quad 
				+C\Delta t^3\sum_{n=1}^m\|\partial_{t}^+\Pi_{k+1}^ou^n\|_{\mathcal{T}_h}^2.
			\end{split}
		\end{gather}
		Finally,  we proceed to estimate the right-hand terms. First,  we have
		it follows 
		\begin{align*}
			&C\Delta t\sum_{n=1}^{m}\|\partial_t^+u_{Ih}^n-\partial_{t}u^n\|_{\mathcal{T}_h}^2
			+C\Delta t^3\sum_{n=1}^m\|\partial_{t}^+\Pi_{k+1}^ou^n\|_{\mathcal{T}_h}^2\\
			&\le C\int_{0}^{T}\|\partial_t(u_{Ih}-u)\|_{L^2(\Omega)}^2\;{\rm d}t
			+C\Delta t^2\int_{0}^{T}\|\partial_{tt}u\|_{L^2(\Omega)}^2\;{\rm d}t
			+C\Delta t^2\|\partial_t\Pi_{k+1}^ou\|_{L^2(0, T;L^2(\Omega))}^2.
		\end{align*}
		By direct computation and using  uniform estimate \eqref{u_Ih_infty_bound},  it follows that
		\begin{align}\label{Z^n}
				Z^n=	F(u_{Ih}^n, u_{Ih}^{n-1})-f(u^n)&=(u_{Ih}^n)^3-(u^n)^3+u^n-u^{n-1}+u^{n-1}-u_{Ih}^{n-1}\\
				&\le C|u_{Ih}^n-u^n|+C|u_{Ih}^{n-1}-u^{n-1}|+\Delta t|\partial_{t}^+u^n|\nonumber, 
			\end{align}
			Applying the error estimate for $ u_{Ih} $ in \eqref{error-u-projection},  we obtain
			\begin{align*}
				&C\Delta t\sum_{n=1}^{m}\|\Pi_{k+1}^oZ^n\|_{\mathcal{T}_h}^2\\
				&\le C\Delta t\sum_{n=1}^{m}(\|\Pi_{k+1}^o(u_{Ih}^n-u^n)\|_{\mathcal{T}_h}^2+\|\Pi_{k+1}^o(u_{Ih}^{n-1}-u^{n-1})\|_{\mathcal{T}_h}^2+\Delta t^2\|\partial_{t}^+\Pi_{k+1}^ou^n\|_{\mathcal{T}_h}^2)\\
				&\le Ch^{2k+4}+C\Delta t^3\sum_{n=1}^{m}\|\partial_{t}^+\Pi_{k+1}^ou^n\|_{\mathcal{T}_h}^2, 
			\end{align*}
			By triangle inequality,   approximation property for HDG elliptic operator in \eqref{error-u-projection}  and  \eqref{Z^n},  one gets
			\begin{align*}
				&C\Delta t\sum_{n=1}^{m}(\|\nabla Z^n\|_{\mathcal{T}_h}^2+\|\nabla \Pi_{k+1}^oZ^n\|_{\mathcal{T}_h}^2)\\
				&\le C\Delta t\sum_{n=1}^{m}(\|\nabla(u_{Ih}^n-u^n)\|_{\mathcal{T}_h}^2+\|\nabla(u_{Ih}^{n-1}-u^{n-1})\|_{\mathcal{T}_h}^2+\Delta t^2\|\partial_{t}^+\nabla u^n\|_{\mathcal{T}_h}^2)\\
				&\quad +C\Delta t\sum_{n=1}^{m}(\|\nabla\Pi_{k+1}^o(u_{Ih}^n-u^n)\|_{\mathcal{T}_h}^2+\|\nabla\Pi_{k+1}^o(u_{Ih}^{n-1}-u^{n-1})\|_{\mathcal{T}_h}^2+\Delta t^2\|\partial_{t}^+\nabla\Pi_{k+1}^o u^n\|_{\mathcal{T}_h}^2)\\
				&=C\Delta t\sum_{n=1}^{m}(\|\nabla(u_{Ih}^n-u^n)\|_{\mathcal{T}_h}^2+\|\nabla(u_{Ih}^{n-1}-u^{n-1})\|_{\mathcal{T}_h}^2+\Delta t^2\|\partial_{t}^+\nabla u^n\|_{\mathcal{T}_h}^2)\\
				&\quad +C\Delta t\sum_{n=1}^{m}(\|\nabla(u_{Ih}^n-\Pi_{k+1}^ou^n)\|_{\mathcal{T}_h}^2+\|\nabla(u_{Ih}^{n-1}-\Pi_{k+1}^ou^{n-1})\|_{\mathcal{T}_h}^2+\Delta t^2\|\partial_{t}^+\nabla\Pi_{k+1}^o u^n\|_{\mathcal{T}_h}^2)\\
				&\le Ch^{2k+2}+C\Delta t^3\sum_{n=1}^{m}(\|\partial_{t}^+\nabla u^n\|_{\mathcal{T}_h}^2
				+\|\partial_{t}^+\nabla\Pi_{k+1}^ou^n\|_{\mathcal{T}_h}^2).
			\end{align*}
		
		Since 
		\begin{align*}
			&C\Delta t^3(\|\partial_t^+\Pi_{k+1}^ou^n\|_{\mathcal{T}_h}^2+\|\partial_{t}^+(\nabla\Pi_{k+1}^ou^n)\|_{\mathcal{T}_h}^2+\|\partial_{t}^+\nabla u^n\|_{\mathcal{T}_h}^2)\\
			&\qquad=  C\Delta t\sum_{n=1}^{m}\int_{\Omega}(\int_{t_{n-1}}^{t_n}\partial_{t}\Pi_{k+1}^ou\;{\rm d}t)^2+(\int_{t_{n-1}}^{t_n}\partial_{t}\Pi_{k+1}^o\nabla u\;{\rm d}t)^2
			+(\int_{t_{n-1}}^{t_n}\partial_{t}\nabla u\;{\rm d}t)^2
			\;{\rm d}{\bm x}\\
			&\qquad\le C\Delta t^2\int_{0}^{T}(\|\partial_{t}\Pi_{k+1}^ou\|_{L^2(\Omega)}^2+
			\|\partial_{t}\Pi_{k+1}^o\nabla u\|_{L^2(\Omega)}^2
			+\|\partial_{t}\nabla u\|_{L^2(\Omega)}^2) {\rm d}t, 
		\end{align*}
		choosing $ \theta=\frac{1}{5} $ and substituting the above estimate into \eqref{error-eq-flux-summation-substitute},  we obtain
		\begin{align}\label{flux-error}
			\frac{\epsilon^2}{2} \interleave(e_h^{\bm q^{m}},  e_h^{u^{m}}, e_h^{\widehat{u}^{m}})\interleave^2
			+\frac{\Delta t}{5Pe}\sum_{n=1}^{m}\interleave(e_h^{\bm p^n},  e_h^{\phi^n}, e_h^{\widehat{\phi}^n})\interleave^2 \leq C \big(h^{k+1}+\Delta t\big)^2.
		\end{align}
		Applying the triangle inequality and \eqref{flux-error},  we immediately obtain \eqref{Final-flux}.
	\end{proof}

	\section{Numerical experiments}
	\label{numerics}
	We consider several examples on a square domain in $\mathbb{R}^2$.
	
	{ In the first example, we study error estimates and convergence rates by considering a manufactured solution of the system \eqref{ori}, with suitable source terms added to the right-hand side of \eqref{ori}. We compare two forms of the convection operator $\mathcal{B}$, namely the non-upwind form \eqref{def_B} and the upwind form \eqref{def_B_1}.
		
		In the second and third examples, where no exact solution is available, we investigate spinodal decomposition under circular convection and compare the qualitative behavior of the numerical solutions obtained with and without upwinding. 
		
		Since the scheme is nonlinear, we solve it by Newton's method after linearizing the nonlinear term. The resulting linear systems are solved by the MINRES iteration. The absolute tolerance for the Newton iteration is set to $10^{-11}$, while the absolute and relative tolerances for MINRES are set to $10^{-14}$ and $10^{-12}$, respectively.}
	\begin{example}\label{exmaple1}
		We consider the computational domain  $\Omega=[0, 1]^2$ with final time 
		$T=0.5$,  Péclet number $Pe=3$ and parameter  $\epsilon=2$. The time step size is chosen as $\Delta t =2(\frac{h}{\sqrt{2}})^{k+2}$. The exact solution $(u, \phi)$ is given by
		\begin{align*}
			u(x, y, t)&=50e^{-t}x^2(x-1)^2y^2(y-1)^2, \\
			\phi(x, y, t)&=0.1\sin(t)\cos(2\pi x)\cos(2\pi y).
		\end{align*}
		The velocity field $\boldsymbol{\beta}(x,  y)=\left(\beta_1,  \beta_2 \right)^{\mathrm{T}}$ is defined as
		\begin{align*}
			& \beta_1=\sin^2(\pi x)\sin(\pi y)\cos(\pi y),  \\
			& \beta_2=-\sin^2(\pi y)\sin(\pi x)\cos(\pi x).  
		\end{align*}
		We set $\tau_c=\alpha=10$ and report the $L^2$ norm error between the exact solutions and the numerical solutions obtained using our HDG method in \Cref{table1}. 
		For clarity,  we note that the operator $\mathcal{B}$ is used in the form given by \eqref{def_B}. As stated in the introduction and \Cref{firstremarkforB},  when employing the operator $\mathcal{B}$ in the form of \eqref{def_B_1},  the optimal convergence rate is lost for $k=0$,  while it remains valid for $k \geq 1$. This result is reported in \Cref{table2},  and the corresponding theoretical result can be found in \Cref{proof_main_res}.

	\end{example}

	\begin{table}[H]
		\begin{tabular}{c|c|cc|cc|cc|cc}
			\Xhline{0.1pt}
			\multirow{2}{*}{Degree} & \multirow{2}{*}{$\frac{h}{\sqrt{2}}$} & \multicolumn{2}{c|}{$\| u- u_h\|_{\mathcal T_h}$} & \multicolumn{2}{c|}{$\| \phi - \phi_h\|_{\mathcal T_h}$} & \multicolumn{2}{c|}{$\| \bm q- \bm q_h\|_{\mathcal T_h}$}
			& \multicolumn{2}{c}{$\| \bm p- \bm p_h\|_{\mathcal T_h}$} \\ \cline{3-10}
			& & \multicolumn{1}{c|}{Error} & Rate & \multicolumn{1}{c|}{Error} & Rate & \multicolumn{1}{c|}{Error} & Rate & \multicolumn{1}{c|}{Error} & Rate \\ \hline
			\multirow{7}{*}{$k=0$} 
			& $2^{-3}$ & \multicolumn{1}{c|}{7.05E-03} & - & \multicolumn{1}{c|}{7.47E-03} &-& \multicolumn{1}{c|}{6.52E-02} & -& \multicolumn{1}{c|}{6.79E-02} & - \\
			& $2^{-4}$ & \multicolumn{1}{c|}{1.69E-03} & 2.06 & \multicolumn{1}{c|}{1.75E-03} & 2.09 & \multicolumn{1}{c|}{3.29E-02} & 0.99 & \multicolumn{1}{c|}{3.36E-02} & 1.02 \\
			& $2^{-5}$ & \multicolumn{1}{c|}{4.20E-04} & 2.01 & \multicolumn{1}{c|}{4.28E-04} & 2.03 & \multicolumn{1}{c|}{1.65E-02} & 1.00 & \multicolumn{1}{c|}{1.67E-02} & 1.00 \\
			& $2^{-6}$ & \multicolumn{1}{c|}{1.05E-04} & 2.00 & \multicolumn{1}{c|}{1.07E-04} & 2.00 & \multicolumn{1}{c|}{8.24E-03} & 1.00 & \multicolumn{1}{c|}{8.36E-03} & 1.00 \\           
			& $2^{-7}$ & \multicolumn{1}{c|}{2.62E-05} & 2.00 & \multicolumn{1}{c|}{2.67E-05} & 2.00 & \multicolumn{1}{c|}{4.12E-03} & 1.00 & \multicolumn{1}{c|}{4.18E-03} & 1.00 \\ \hline
			
			\multirow{6}{*}{$k=1$} 
			& $2^{-2}$ & \multicolumn{1}{c|}{6.36E-03} &- & \multicolumn{1}{c|}{8.28E-03} & - & \multicolumn{1}{c|}{3.02E-02} & - & \multicolumn{1}{c|}{3.42E-02} & - \\
			& $2^{-3}$ & \multicolumn{1}{c|}{6.88E-04} & 3.21 & \multicolumn{1}{c|}{9.04E-04} & 3.20 & \multicolumn{1}{c|}{7.97E-03} & 1.92 & \multicolumn{1}{c|}{8.86E-03} & 1.95 \\
			& $2^{-4}$ & \multicolumn{1}{c|}{7.39E-05} & 3.22 & \multicolumn{1}{c|}{5.28E-05} & 3.25 & \multicolumn{1}{c|}{2.02E-03} & 1.98 & \multicolumn{1}{c|}{2.25E-03} & 1.98 \\
			& $2^{-5}$ & \multicolumn{1}{c|}{8.83E-06} & 3.06 & \multicolumn{1}{c|}{1.10E-05} & 3.11 & \multicolumn{1}{c|}{5.08E-04} & 1.99 & \multicolumn{1}{c|}{5.64E-04} & 1.99 \\
			& $2^{-6}$ & \multicolumn{1}{c|}{1.11E-06} & {2.99} & \multicolumn{1}{c|}{1.40E-06} & 2.98 & \multicolumn{1}{c|}{1.27E-04} & 2.00 & \multicolumn{1}{c|}{1.41E-04} & 2.00 \\ \hline
		\end{tabular}
		\caption{\Cref{exmaple1}: The  smooth case,  with the operator $\mathcal B$ defined in \eqref{def_B}}\label{table1}
	\end{table}

	\begin{table}[H]
		\begin{tabular}{c|c|cc|cc|cc|cc}
			\Xhline{0.1pt}
			\multirow{2}{*}{Degree} & \multirow{2}{*}{$\frac{h}{\sqrt{2}}$} & \multicolumn{2}{c|}{$\| u- u_h\|_{\mathcal T_h}$}    & \multicolumn{2}{c|}{$\| \phi - \phi_h\|_{\mathcal T_h}$}    & \multicolumn{2}{c|}{$\| \bm q- \bm q_h\|_{\mathcal T_h}$}
			& \multicolumn{2}{c}{$\| \bm p- \bm p_h\|_{\mathcal T_h}$}     \\ \cline{3-10}
			&                   & \multicolumn{1}{c|}{Error} & Rate & \multicolumn{1}{c|}{Error} & Rate & \multicolumn{1}{c|}{Error} & Rate & \multicolumn{1}{c|}{Error} & Rate                         \\ \hline
			\multirow{7}{*}{$k=0$}                 
			& $2^{-3}$                 & \multicolumn{1}{c|}{5.86E-03}  & -  & \multicolumn{1}{c|}{3.09E-02}  & -  & \multicolumn{1}{c|}{6.48E-02}  & -  & \multicolumn{1}{c|}{1.53E-01}  & -                 \\
			& $2^{-4}$                 & \multicolumn{1}{c|}{2.14E-03}  & 1.46  & \multicolumn{1}{c|}{1.21E-02}  & 1.35  & \multicolumn{1}{c|}{3.36E-02}  & 0.94  & \multicolumn{1}{c|}{7.49E-02}  & 1.03                  \\
			& $2^{-5}$                 & \multicolumn{1}{c|}{9.83E-04}  & 1.12  & \multicolumn{1}{c|}{5.76E-03}  & 1.07  & \multicolumn{1}{c|}{1.71E-02}  & 0.98  & \multicolumn{1}{c|}{3.78E-02}  & 0.99                 \\
			& $2^{-6}$                 & \multicolumn{1}{c|}{5.03E-04}  & 0.97  & \multicolumn{1}{c|}{2.88E-03}  & 1.00  & \multicolumn{1}{c|}{8.59E-03}  & 0.99 & \multicolumn{1}{c|}{1.91E-02}  & 0.98                  \\
			& $2^{-7}$                 & \multicolumn{1}{c|}{2.59E-04}  & 0.96  & \multicolumn{1}{c|}{1.45E-03}  & 0.99  & \multicolumn{1}{c|}{4.31E-03}  & 1.00  & \multicolumn{1}{c|}{9.63E-03}  & 0.99                 \\ \hline
			\multirow{6}{*}{$k=1$} 
			& $2^{-2}$                 & \multicolumn{1}{c|}{4.15E-03}  & -  & \multicolumn{1}{c|}{1.46E-03}  & - & \multicolumn{1}{c|}{2.97E-02}  & -  & \multicolumn{1}{c|}{4.91E-02}  & -                   \\
			& $2^{-3}$                 & \multicolumn{1}{c|}{5.61E-04}  & 2.89  & \multicolumn{1}{c|}{1.26E-03}  & 3.53 & \multicolumn{1}{c|}{7.95E-03}  & 1.90  & \multicolumn{1}{c|}{1.22E-02}  &2.01                  \\
			& $2^{-4}$                 & \multicolumn{1}{c|}{7.05E-05}  & 2.99  & \multicolumn{1}{c|}{1.45E-04}  & 3.12 & \multicolumn{1}{c|}{2.02E-03}  & 1.97  & \multicolumn{1}{c|}{3.04E-03}  & 2.00                  \\
			& $2^{-5}$                 & \multicolumn{1}{c|}{8.82E-06}  & 3.00  & \multicolumn{1}{c|}{1.84E-05}  & 2.98  & \multicolumn{1}{c|}{5.08E-04}  & 1.99  & \multicolumn{1}{c|}{7.62E-04}  & 2.00                  \\
			& $2^{-6}$                 & \multicolumn{1}{c|}{1.10E-06}  & 3.00  & \multicolumn{1}{c|}{2.35E-06}  & 2.97  & \multicolumn{1}{c|}{1.27E-04}  &2.00  & \multicolumn{1}{l|}{1.91E-04}  & 2.00   \\ \hline
		\end{tabular}
		\caption{\Cref{exmaple1}: The  smooth case,  with the operator $\mathcal B$  defined in \eqref{def_B_1}}\label{table2}
	\end{table}
	
	{
		\begin{example}\label{example--2}
			We consider circular convection on the square domain $\Omega=[0,1]^2$ with
			\[
			Pe=200,\qquad \epsilon=\frac{1}{100},\qquad T=60,\qquad h=0.01,\qquad \Delta t=10^{-3}.
			\]
			
			The initial condition $u_0(x,y)$ is chosen as the cross-shaped profile
			\[
			u_0(x,y):=
			\begin{cases}
				1.0, & (x,y)\in [0.25,0.75]\times[0.375,0.625]\ \cup\ [0.375,0.625]\times[0.25,0.75],\\[2mm]
				-1.0, & \text{otherwise}.
			\end{cases}
			\]
			The velocity field is defined by
			\[
			\bm\beta(x,y):=v(r)(2y-1,\,1-2x)^{\mathrm T}, \qquad (x,y)\in \Omega,
			\]
			where
			\[
			v(r):=\frac{1}{2}\left(1+\tanh\left(a\left(\frac{1}{2}-b-r\right)\right)\right),
			\qquad
			r^2:=\left(x-\frac{1}{2}\right)^2+\left(y-\frac{1}{2}\right)^2,
			\]
			with parameters $a=200$ and $b=0.1$. Clearly, $\nabla\cdot\bm\beta=0$, and the no-penetration condition
			$\bm\beta\cdot\bm n=0$ holds to machine precision on $\partial\Omega$.
			
			We first compute the solution using the non-upwind operator $\mathcal{B}$ in the form \eqref{def_B}. The resulting evolution of the cross-shaped profile under circular convection is shown in \Cref{fig:combined-145} at twelve selected times. To assess the effect of upwinding on this more challenging problem, we also perform computations with the upwind operator \eqref{def_B_1} using the same mesh size $h$ and time step $\Delta t$. The corresponding results for $k=0$ and $k=1$ are shown in \Cref{fig:combined-23}. For brevity, in the upwind case we display only four representative snapshots for each polynomial degree.

			\begin{figure}[htbp]
				\caption{Snapshots of the evolution of an initially cross-shaped profile under circular convection at selected times, computed with the operator $\mathcal{B}$ in the form \eqref{def_B} for $k=0$.}
				\centering
				\includegraphics[width=0.23\textwidth]{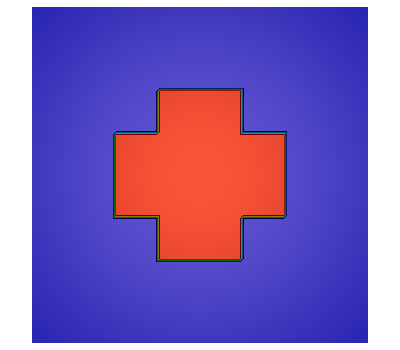} 
				\includegraphics[width=0.23\textwidth]{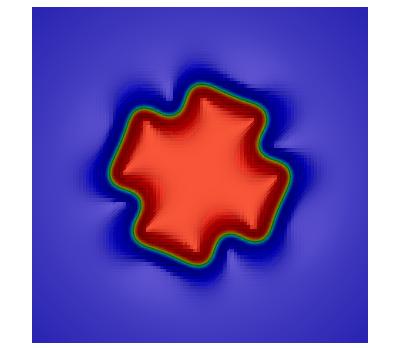} 
				\includegraphics[width=0.23\textwidth]{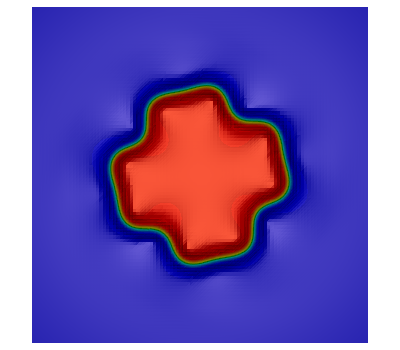} 
				\includegraphics[width=0.23\textwidth]{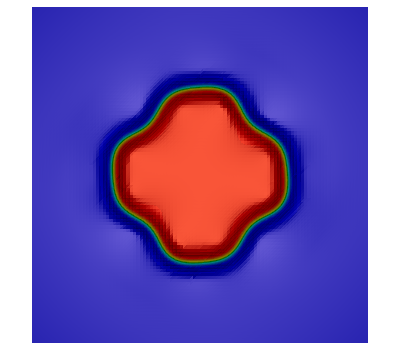}
				\par 
				{\footnotesize From left to right: $t=0$, $0.2$, $0.7$, and $1.6$.}
				
				\vspace{1em}

				\includegraphics[width=0.23\textwidth]{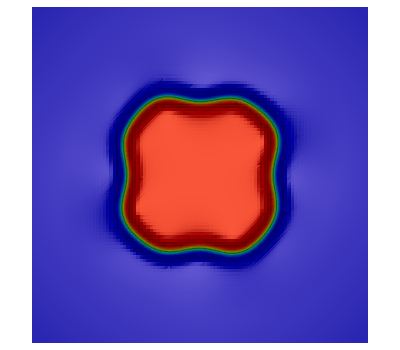} 
				\includegraphics[width=0.23\textwidth]{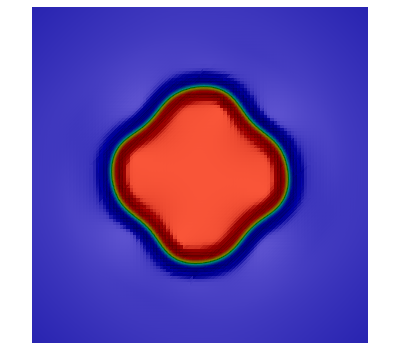} 
				\includegraphics[width=0.23\textwidth]{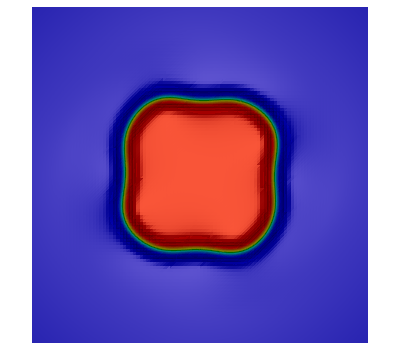} 
				\includegraphics[width=0.23\textwidth]{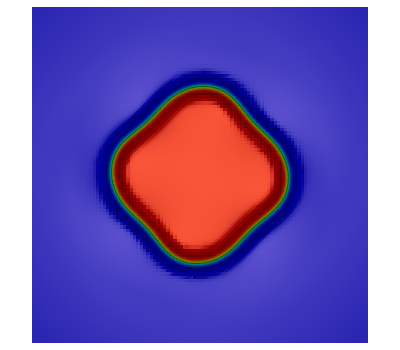}
				
				\par
				
				{\footnotesize From left to right:  $t=2.0$, $2.4$, $2.8$, $3.2$.}
				
				\vspace{1em}

				\includegraphics[width=0.23\textwidth]{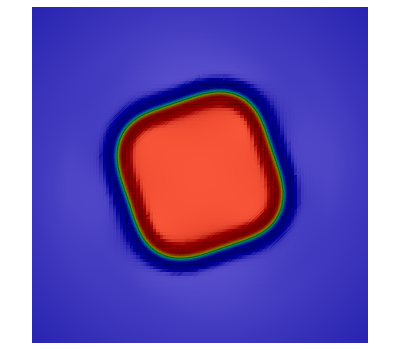} 
				\includegraphics[width=0.23\textwidth]{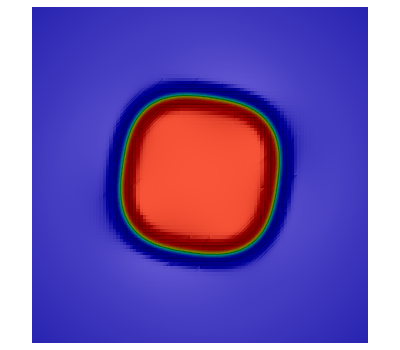} 
				\includegraphics[width=0.23\textwidth]{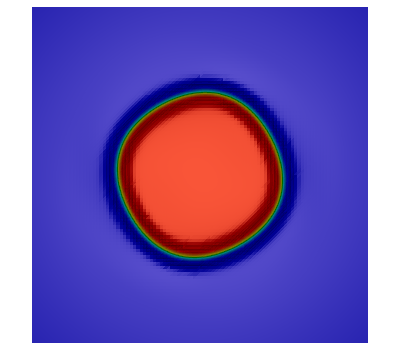} 
				\includegraphics[width=0.23\textwidth]{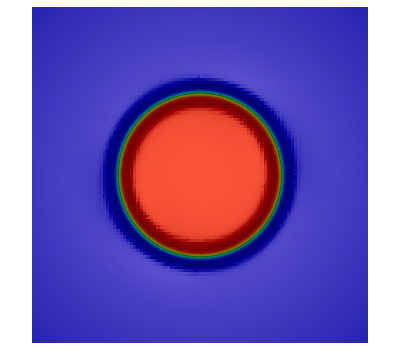}

				\par
				
				{\footnotesize From left to right:   $t=5$, $10$, $20$, and $60$.}
				
				\label{fig:combined-145}
			\end{figure}

			\begin{figure}[htb]
				\caption{Evolution of an initially cross-shaped profile under circular convection, computed with the upwind operator $\mathcal{B}$ in the form \eqref{def_B_1} for different polynomial degrees $k$.}
				
				\centering
				\includegraphics[width=0.23\textwidth]{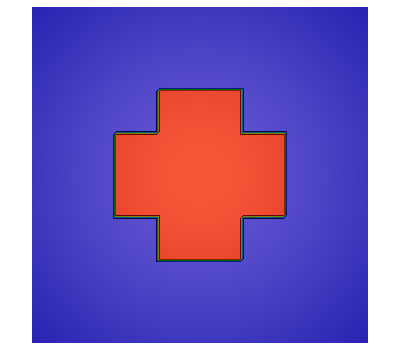}
				\includegraphics[width=0.23\textwidth]{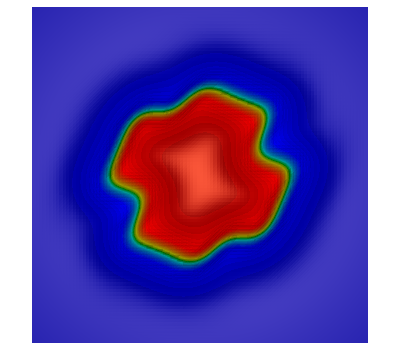}
				\includegraphics[width=0.23\textwidth]{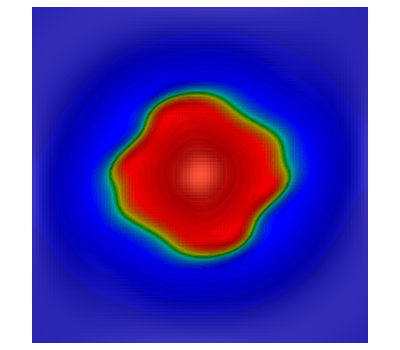}
				\includegraphics[width=0.23\textwidth]{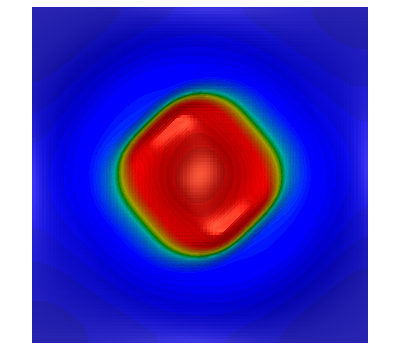}

				\par

				{\footnotesize $k=0$: from left to right, $t=0$, $0.2$, $0.7$, and $1.6$.}

				\vspace{1em}

				\includegraphics[width=0.23\textwidth]{T00--upwind--1--cross.png}
				\includegraphics[width=0.23\textwidth]{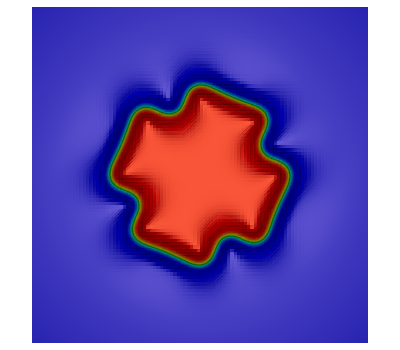}
				\includegraphics[width=0.23\textwidth]{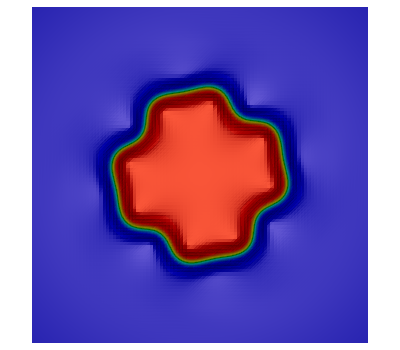}
				\includegraphics[width=0.23\textwidth]{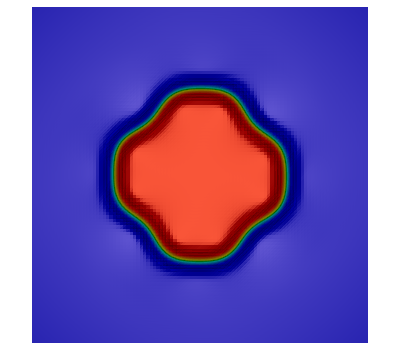}
				\par

				{\footnotesize $k=1$: from left to right, $t=0$, $0.2$, $0.7$, and $1.6$.}
				
				\label{fig:combined-23}
			\end{figure}

			The comparison indicates that, in the lowest-order case $k=0$, the upwind formulation leads to poorer performance. By contrast, for $k=1$ the upwind solution remains qualitatively much closer to the non-upwind computation. These observations are fully consistent with the theoretical analysis and the convergence study in Example~\ref{exmaple1}.
		\end{example}

		\begin{example}
			In this example, we consider a different initial condition from \Cref{example--2}. The parameters are chosen as
			\[
			Pe=200,\quad \epsilon=\frac{1}{200},\quad \alpha=10,\quad \Omega=[0,1]^2,\quad T=350,\quad h=0.01,\quad \Delta t=10^{-3}.
			\]
			
			The initial condition $u_0$ is taken to be uniformly distributed in the interval $[-1,1]$ inside the disk
			\[
			(x-0.5)^2+(y-0.5)^2\le 0.4^2,
			\]
			and $u_0=-1$ outside this disk. The convection field $\bm\beta$ is the same as in \Cref{example--2}.
			
			We first compute the solution using the non-upwind operator $\mathcal{B}$ in the form \eqref{def_B}; the resulting formation and subsequent transport of bulk regions under circular convection are shown in \Cref{figure4}. We then perform the same computation with the upwind operator \eqref{def_B_1}, using the same mesh size $h$ and time step $\Delta t$. The corresponding results are presented in \Cref{figure4-1} for $k=0$ and  $k=1$.
			
			The numerical results again show that the effect of upwinding is most pronounced in the lowest-order case. For $k=0$, the upwind formulation yields a less accurate qualitative evolution, with weaker resolution of the interfaces and less distinct bulk structures. In contrast, for $k=1$, the upwind and non-upwind solutions remain qualitatively close throughout the simulation. This behavior is consistent with the conclusions of Example~\ref{example--2} and with the theoretical and numerical findings in Example~\ref{exmaple1}.
		\end{example}

		\begin{figure}[htbp]
			\caption{Snapshots of bulk regions under circular convection  at selected times, computed with the operator $\mathcal{B}$ in the form \eqref{def_B} for $k=0$.}
			\centering
			
			\includegraphics[width=0.23\textwidth]{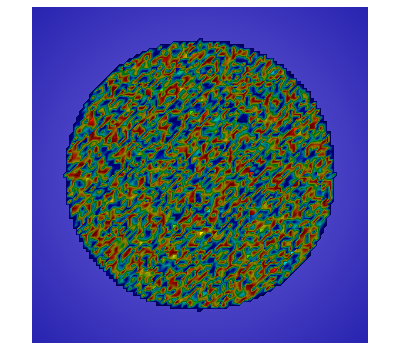}
			\includegraphics[width=0.23\textwidth]{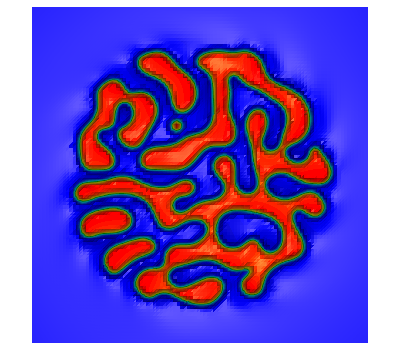}
			\includegraphics[width=0.23\textwidth]{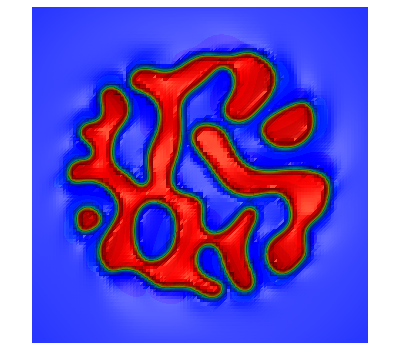}
			\includegraphics[width=0.23\textwidth]{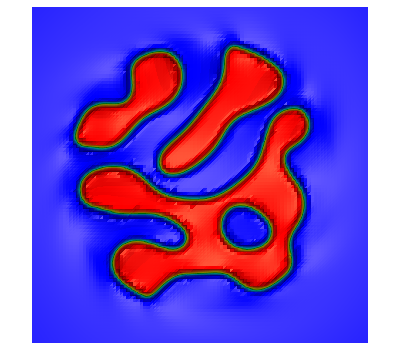}
			\par
			{\footnotesize From left to right:  $t=0$, $2$, $6$, $15$.}
			
			\vspace{1em}

			\includegraphics[width=0.23\textwidth]{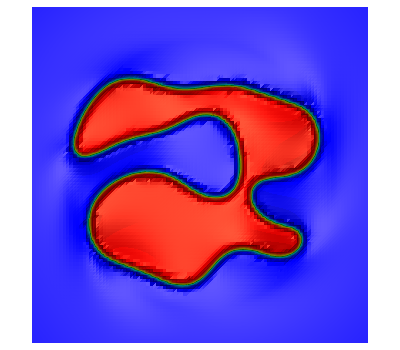}
			\includegraphics[width=0.23\textwidth]{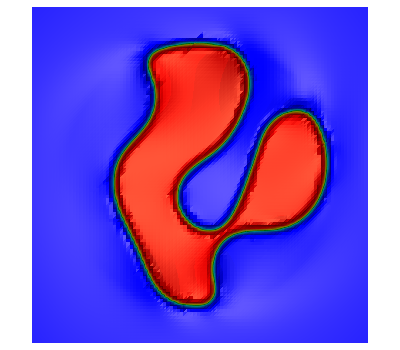}
			\includegraphics[width=0.23\textwidth]{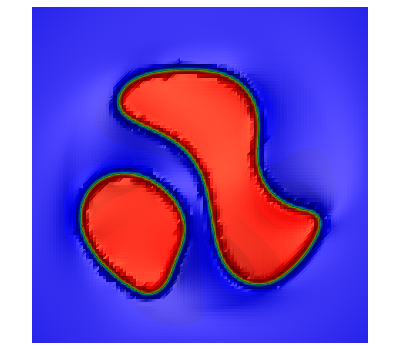}
			\includegraphics[width=0.23\textwidth]{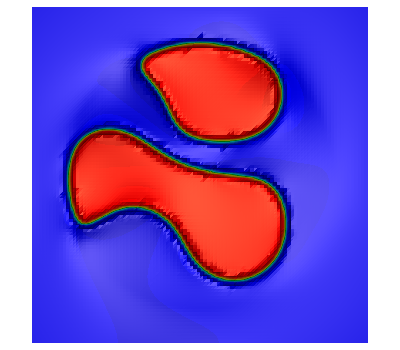}
			
			\par
			{\footnotesize From left to right:  $t=60$, $90$, $120$, $150$.}
			
			\vspace{1em}

			\includegraphics[width=0.23\textwidth]{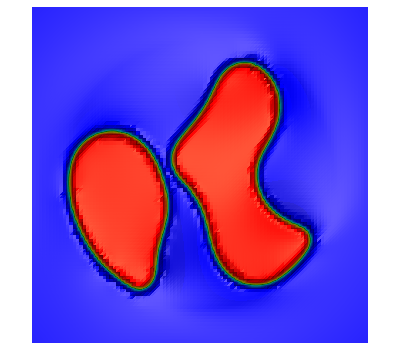}
			\includegraphics[width=0.23\textwidth]{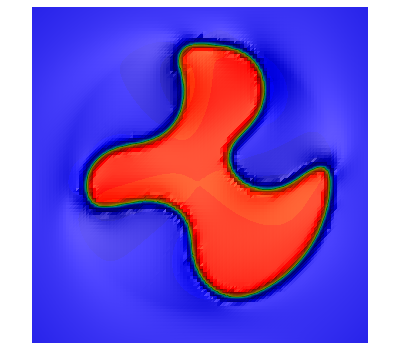}
			\includegraphics[width=0.23\textwidth]{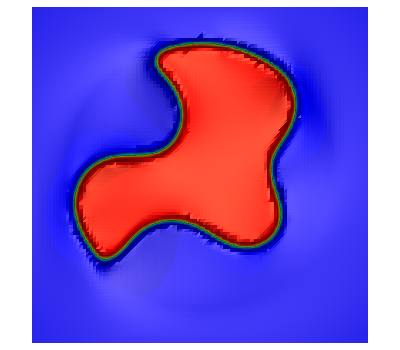}
			\includegraphics[width=0.23\textwidth]{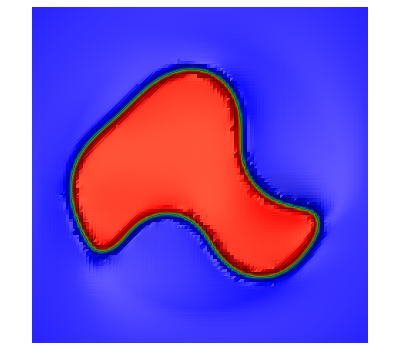}
			
			\par
			{\footnotesize From left to right:  $t=200$, $250$, $300$, $350$.}
			
			\vspace{1em}
			
			\label{figure4}
		\end{figure}
		
		\begin{figure}[htb]
			\caption{Evolution of  bulk regions under circular convection, computed with the upwind operator $\mathcal{B}$ in the form \eqref{def_B_1} for different polynomial degrees $k$.}
			\centering
			
			\includegraphics[width=0.23\textwidth]{T00.png}
			\includegraphics[width=0.23\textwidth]{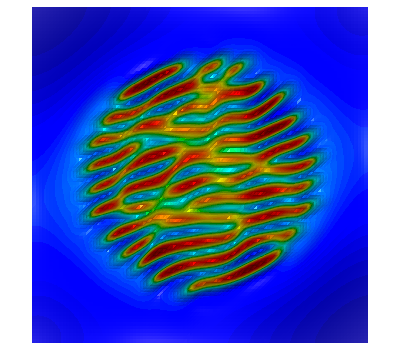}
			\includegraphics[width=0.23\textwidth]{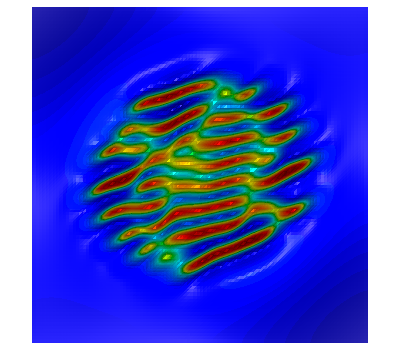}
			\includegraphics[width=0.23\textwidth]{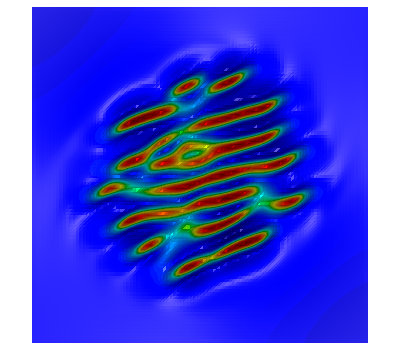}
			
			\par

			{\footnotesize $k=0$: from left to right, $t=0$, $2$, $6$, $15$.}

			\vspace{1em}
			
			\includegraphics[width=0.23\textwidth]{T00.png}
			\includegraphics[width=0.23\textwidth]{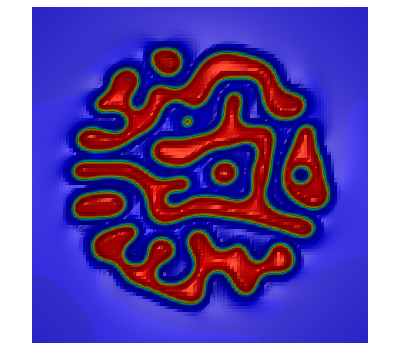}
			\includegraphics[width=0.23\textwidth]{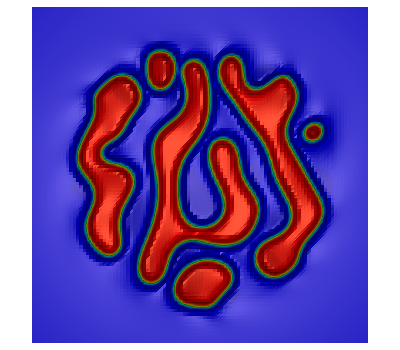}
			\includegraphics[width=0.23\textwidth]{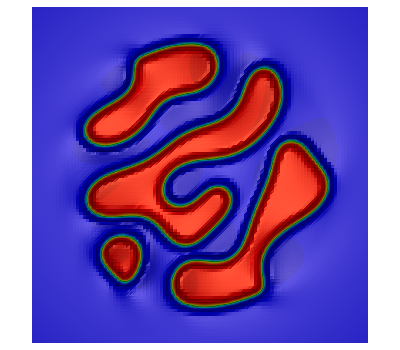}
			
			\par

			{\footnotesize $k=1$: from left to right, $t=0$, $2$, $6$, $15$.} 
			\label{figure4-1}
		\end{figure}
	}

	\section{Appendix}\label{Appendix}
	In this section,  we provide the complete proof of   \eqref{error-phi-projection} and \eqref{error-p-projection},   dividing it into two steps for clarity.
	\subsection*{Step 1: An energy argument}
	First,  we derive the projection equations \eqref{projection--eq--phi-u}. For any $(w_1, \mu_1)\in W_h\times M_h$,  using the definitions of the operators $ \mathcal{A} $ and $ \mathcal{B} $,  we obtain:
	\begin{align*}
		&\frac{1}{Pe}\mathcal{A}(\bm{\Pi}_k^o\bm{p}, \Pi_{k+1}^o\phi, \Pi_k^{\partial}\phi;\bm{r}_1, w_1, \mu_1)+\mathcal{B}(\Pi_{k+1}^{o}u, \Pi_k^{\partial}u;w_1)\\
		&=\frac{1}{Pe}[(\bm{\Pi}_k^o\bm{p}, \bm{r}_1)_{\mathcal{T}_h}-(\Pi_{k+1}^{o}\phi, \nabla\cdot\bm{r}_1)_{\mathcal{T}_h}+\langle \Pi_k^{\partial}\phi, \bm{r}_1\cdot\bm{n}\rangle_{\partial\mathcal{T}_h}]\\
		&\quad+\frac{1}{Pe}[(\nabla\cdot\bm{\Pi}_k^o\bm{p}, w_1)_{\mathcal{T}_h}-\langle \bm{\Pi}_k^o\bm {p}\cdot\bm{n}, \mu_1\rangle_{\partial\mathcal{T}_h}+\alpha\langle h_{\tau}^{-1}(\Pi_k^{\partial}\Pi_{k+1}^o\phi-\Pi_k^{\partial}\phi), \Pi_k^{\partial}w_1-\mu_1\rangle_{\partial\mathcal{T}_h}]\\
		&\quad-(\bm{\beta}\Pi_{k+1}^ou, \nabla w_1)_{\mathcal{T}_h}
		+\langle \bm{\beta}\cdot\bm{n}\Pi_k^{\partial}u, w_1\rangle_{\partial\mathcal{T}_h}
		.
	\end{align*}
	By the definition of the projection operator and applying integration by parts,  we obtain:
	\begin{align*}
		&\frac{1}{Pe}\mathcal{A}(\bm{\Pi}_k^o\bm{p}, \Pi_{k+1}^o\phi, \Pi_k^{\partial}\phi;\bm{r}_1, w_1, \mu_1)+\mathcal{B}(\Pi_{k+1}^{o}u, \Pi_k^{\partial}u;w_1)\\
		&=\frac{1}{Pe}[(\bm{p}, \bm{r}_1)_{\mathcal{T}_h}-(\phi, \nabla\cdot\bm{r}_1)_{\mathcal{T}_h}+\langle \phi, \bm{r}_1\cdot\bm{n}\rangle_{\partial\mathcal{T}_h}
		+\alpha\langle h_{\tau}^{-1}(\Pi_{k+1}^o\phi-\phi), \Pi_k^{\partial}w_1-\mu_1\rangle_{\partial\mathcal{T}_h}]\\
		&\quad+\frac{1}{Pe}[-(\bm{\Pi}_k^o\bm{p}, \nabla w_1)_{\mathcal{T}_h}
		+\langle \bm{\Pi}_k^o\bm{p}\cdot\bm{n}, w_1\rangle_{\partial\mathcal{T}_h}
		-\langle \bm{\Pi}_k^o\bm{p}\cdot\bm{n}, \mu_1\rangle_{\partial\mathcal{T}_h}
		]\\
		&\quad-(\bm{\beta}\Pi_{k+1}^ou, \nabla w_1)_{\mathcal{T}_h}
		+\langle \bm{\beta}\cdot\bm{n}\Pi_k^{\partial}u, w_1\rangle_{\partial\mathcal{T}_h}
		.
	\end{align*}
	Noting the fact that $ (\bm{p}+\nabla \phi, \bm{r}_1)_{\mathcal{T}_h}=0 $,  we obtain:
	\begin{align}\label{appendix-temp-1}
		\begin{split}
			&\frac{1}{Pe}\mathcal{A}(\bm{\Pi}_k^o\bm{p}, \Pi_{k+1}^o\phi, \Pi_k^{\partial}\phi;\bm{r}_1, w_1, \mu_1)+\mathcal{B}(\Pi_{k+1}^{o}u, \Pi_k^{\partial}u;w_1)\\
			&=\frac{1}{Pe}[-(\bm{p}, \nabla w_1)_{\mathcal{T}_h}+\langle\bm{\Pi}_k^o\bm{p}\cdot\bm{n}, w_1-\mu_1\rangle_{\partial\mathcal{T}_h}
			+\alpha\langle h_{\tau}^{-1}(\Pi_{k+1}^o\phi-\phi), \Pi_k^{\partial}w_1-\mu_1\rangle_{\partial\mathcal{T}_h}]\\
			&\quad-(\bm{\beta}\Pi_{k+1}^ou, \nabla w_1)_{\mathcal{T}_h}
			+\langle \bm{\beta}\cdot\bm{n}\Pi_k^{\partial}u, w_1\rangle_{\partial\mathcal{T}_h}.
		\end{split}				
	\end{align}
	
		Since $\bm p\in \bm H(\mathrm{div};\Omega)$ and $\bm p\cdot\bm n=0$ on $\partial\Omega$, we have
		\[
		\langle \bm p\cdot\bm n,\mu_1\rangle_{\partial\mathcal T_h}=0
		\qquad \forall\,\mu_1\in M_h.
		\]
		Moreover, since both $u$ and $\Pi_k^\partial u$ are single-valued on each interior face, the corresponding interior-face contributions cancel when the faces are counted with opposite normals; on boundary faces they vanish because $\bm\beta\cdot\bm n=0$ on $\partial\Omega$. Therefore,
		\[
		\langle \bm\beta\cdot\bm n(\Pi_k^\partial u-u),\mu_1\rangle_{\partial\mathcal T_h}=0
		\qquad \forall\,\mu_1\in M_h.
		\]
		Substituting these identities into \eqref{appendix-temp-1} and integrating by parts, we obtain
	
	\begin{align}\label{projection--eq--phi-u} 
		\begin{split} &\frac{1}{Pe}\mathcal{A}(\bm{\Pi}_k^o\bm{p}, \Pi_{k+1}^o\phi, \Pi_k^{\partial}\phi;\bm{r}_1, w_1, \mu_1)+\mathcal{B}(\Pi_{k+1}^{o}u, \Pi_k^{\partial}u;w_1)\\ &=\frac{1}{Pe}[(\nabla\cdot\bm{p}, w_1)_{\mathcal{T}_h} -\langle\bm{p}\cdot\bm{n}, w_1\rangle_{\partial\mathcal{T}_h} +\langle\bm{p}\cdot\bm{n}, \mu_1\rangle_{\partial \mathcal{T}_h} +\langle\Pi_k^o\bm{p}\cdot\bm{n}, w_1-\mu_1\rangle_{\partial\mathcal{T}_h} ] \\ &\quad +\frac{\alpha}{Pe}\langle h_{\tau}^{-1}(\Pi_{k+1}^o\phi-\phi), \Pi_{k}^{\partial}w_1-\mu_1\rangle_{\partial\mathcal{T}_h} -(\bm{\beta}(\Pi_{k+1}^o u-u), \nabla w_1)_{\mathcal{T}_h}\\ &\quad+\langle\bm{\beta}\cdot\bm{n}(\Pi_{k}^{\partial}u-u), w_1\rangle_{\partial \mathcal{T}_h} -(\bm{\beta}u, \nabla w_1)_{\mathcal{T}_h}+\langle\bm{\beta}\cdot\bm{n}u, w_1\rangle_{\partial \mathcal{T}_h} -\langle\bm{\beta}\cdot\bm{n}(\Pi_{k}^{\partial}u-u), \mu_1\rangle_{\partial \mathcal{T}_h} \\ &=\frac{1}{Pe}[(\nabla\cdot\bm{p}, w_1)_{\mathcal{T}_h} +\langle(\bm{\Pi}_k^o\bm{p}-\bm{p})\cdot\bm{n}, w_1-\mu_1\rangle_{\partial\mathcal{T}_h} +\alpha\langle h_{\tau}^{-1}(\Pi_{k+1}^o\phi-\phi), \Pi_k^{\partial}w_1-\mu_1\rangle_{\partial\mathcal{T}_h}]\\ &\quad -(\bm{\beta}(\Pi_{k+1}^ou-u), \nabla w_1)_{\mathcal{T}_h} +\langle\bm{\beta}\cdot\bm{n}(\Pi_k^{\partial}u-u), w_1-\mu_1\rangle_{\partial\mathcal{T}_h}+(\nabla\cdot(\bm{\beta}u), w_1)_{\mathcal{T}_h}, \end{split} 
	\end{align}

		To simplify the notation,  we introduce the following symbols:
		\begin{align}\label{error-notation-varepsilon}
			\begin{split}
				&\varepsilon_h^{\bm{p}}=\bm{\Pi}_k^o\bm{p}-\bm{p}_{Ih}, \ 
				\varepsilon_h^{\phi}=\Pi_{k+1}^o\phi-\phi_{Ih}, \
				\varepsilon_h^{\widehat{\phi}}=\Pi_k^{\partial}\phi-\widehat{\phi}_{Ih}, \\
				&\varepsilon_h^{\bm{q}}=\bm{\Pi}_k^o\bm{q}-\bm{q}_{Ih}, \ 
				\varepsilon_h^{u}=\Pi_{k+1}^ou-u_{Ih}, \
				\varepsilon_h^{\widehat{u}}=\Pi_k^{\partial}u-\widehat{u}_{Ih}.
			\end{split}			
		\end{align}
		Subtracting \eqref{HDG-projection--phi--u}   from \eqref{projection--eq--phi-u},  one obtains:
		\begin{gather}\label{projection--error--phi--u}
			\begin{split}
				&\frac{1}{Pe}\mathcal{A}(\varepsilon_h^{\bm{p}}, \varepsilon_h^{\phi}, \varepsilon_h^{\widehat{\phi}};\bm{r}_1, w_1, \mu_1)+\mathcal{B}(\varepsilon_h^{u}, \varepsilon_h^{\widehat{u}};w_1)\\
				&=\frac{1}{Pe}[\langle(\bm{\Pi}_k^o\bm{p}-\bm{p})\cdot\bm{n}, w_1-\mu_1\rangle_{\partial\mathcal{T}_h}+\alpha\langle h_{\tau}^{-1}(\Pi_{k+1}^o\phi-\phi), \Pi_k^{\partial}w_1-\mu_1\rangle_{\partial\mathcal{T}_h}]\\
				&\quad -(\bm{\beta}(\Pi_{k+1}^ou-u), \nabla w_1)_{\mathcal{T}_h}+\langle \bm{\beta}\cdot\bm{n}(\Pi_k^{\partial}u-u), w_1-\mu_1\rangle_{\partial\mathcal{T}_h}.
			\end{split}
		\end{gather}
		
		Substitute   $ (\bm{r}_1, w_1, \mu_1)=(\varepsilon_h^{\bm{p}}, \varepsilon_h^{\phi}, \varepsilon_h^{\widehat{\phi}}) $ into \eqref{projection--error--phi--u} and apply integration by parts to derive:
		\begin{align*}
			&\frac{1}{Pe}\|\varepsilon_h^{\bm{p}}\|_{\mathcal{T}_h}^2+\frac{\alpha}{Pe}\|h_{\tau}^{-1/2}(\Pi_k^{\partial}\varepsilon_h^{\phi}-\varepsilon_h^{\widehat{\phi}})\|_{\partial\mathcal{T}_h}^2\\
			&=(\bm{\beta}\varepsilon_h^{u}, \nabla\varepsilon_h^{\phi})_{\mathcal{T}_h}
			-\langle \bm{\beta}\cdot\bm{n}\varepsilon_h^{\widehat{u}}, \varepsilon_h^{\phi}\rangle_{\partial\mathcal{T}_h}
			\\
			&\quad+\frac{1}{Pe}[\langle(\bm{\Pi}_k^o\bm{p}-\bm{p})\cdot\bm{n}, \varepsilon_h^{\phi}-\varepsilon_h^{\widehat{\phi}}\rangle_{\partial\mathcal{T}_h}+\alpha\langle h_{\tau}^{-1}(\Pi_{k+1}^o\phi-\phi), \Pi_k^{\partial}\varepsilon_h^{\phi}-\varepsilon_h^{\widehat{\phi}}\rangle_{\partial\mathcal{T}_h}]\\
			&\quad -(\bm{\beta}(\Pi_{k+1}^o u-u), \nabla \varepsilon_h^{\phi})_{\mathcal{T}_h}+\langle \bm{\beta}\cdot\bm{n}(\Pi_k^{\partial}u-u), \varepsilon_h^{\phi}-\varepsilon_h^{\widehat{\phi}}\rangle_{\partial\mathcal{T}_h}
			\\
			&=-(\bm{\beta}\cdot\nabla\varepsilon_h^u, \varepsilon_h^{\phi})_{\mathcal{T}_h}
			+\langle\bm{\beta}\cdot\bm{n}(\varepsilon_h^u-\varepsilon_h^{\widehat{u}}), \varepsilon_h^{\phi}\rangle_{\partial\mathcal{T}_h}
			\\
			&\quad+\frac{1}{Pe}[\langle(\bm{\Pi}_k^o\bm{p}-\bm{p})\cdot\bm{n}, \varepsilon_h^{\phi}-\varepsilon_h^{\widehat{\phi}}\rangle_{\partial\mathcal{T}_h}+\alpha\langle h_{\tau}^{-1}(\Pi_{k+1}^o\phi-\phi), \Pi_k^{\partial}\varepsilon_h^{\phi}-\varepsilon_h^{\widehat{\phi}}\rangle_{\partial\mathcal{T}_h}]\\
			&\quad -(\bm{\beta}(\Pi_{k+1}^o u-u), \nabla \varepsilon_h^{\phi})_{\mathcal{T}_h}+\langle \bm{\beta}\cdot\bm{n}(\Pi_k^{\partial}u-u), \varepsilon_h^{\phi}-\varepsilon_h^{\widehat{\phi}}\rangle_{\partial\mathcal{T}_h}\\
			&=\sum_{i=1}^{6}R_i.
		\end{align*}
		
		Next, we estimate the terms $R_1$--$R_6$. {We first estimate $R_1$ and $R_2$ together. By the Cauchy--Schwarz inequality, we have 
			\begin{align}\label{esr1r2_1}
				R_1+R_2
				\le
				C\Bigl(\|\nabla\varepsilon_h^u\|_{\mathcal{T}_h}
				+\|h_{\tau}^{-1/2}(\varepsilon_h^u-\varepsilon_h^{\widehat{u}})\|_{\partial\mathcal{T}_h}\Bigr)
				\|\varepsilon_h^{\phi}\|_{\mathcal{T}_h}.
			\end{align}
			
			By \eqref{HDG-projection--u}, we have $(\phi-\phi_{Ih},1)_{\mathcal T_h}=0$, and by the definition of the $L^2$ projection $\Pi_{k+1}^o$, we also have $(\Pi_{k+1}^o\phi-\phi,1)_{\mathcal T_h}=0$. Therefore,
			\[
			(\varepsilon_h^{\phi},1)_{\mathcal T_h}
			=
			(\Pi_{k+1}^o\phi-\phi_{Ih},1)_{\mathcal T_h}
			=0,
			\]
			which shows that $\varepsilon_h^{\phi}\in \mathring{W}_h$. Hence, by \eqref{sobolev-02},
			\begin{align}\label{esr1r2_2}
				\|\varepsilon_h^\phi\|_{\mathcal T_h}
				\le
				C\Bigl(
				\|\nabla \varepsilon_h^\phi\|_{\mathcal T_h}^2
				+\alpha\|h_\tau^{-1/2}(\Pi_k^\partial\varepsilon_h^\phi-\varepsilon_h^{\widehat\phi})\|_{\partial\mathcal T_h}^2
				\Bigr)^{1/2}.
			\end{align}
			
			Next, taking $(w_1,\mu_1)=(0,0)$ in \eqref{projection--error--phi--u}, we obtain
			\[
			\mathcal{A}(\varepsilon_h^{\bm p}, \varepsilon_h^{\phi}, \varepsilon_h^{\widehat{\phi}};\bm r_1,0,0)=0,
			\qquad \forall\,\bm r_1\in \bm V_h.
			\]
			Therefore, by \eqref{es_u},
			\begin{align}\label{esr1r2_3}
				\|\nabla \varepsilon_h^\phi\|_{\mathcal T_h} + \|h_\tau^{-1/2}(\varepsilon_h^\phi-\varepsilon_h^{\widehat\phi})\|_{\partial\mathcal T_h}
				\le
				C\Bigl(
				\|\varepsilon_h^{\bm p}\|_{\mathcal T_h}^2
				+\alpha\|h_\tau^{-1/2}(\Pi_k^\partial\varepsilon_h^\phi-\varepsilon_h^{\widehat\phi})\|_{\partial\mathcal T_h}^2
				\Bigr)^{1/2}.
			\end{align}
			
			Finally, using the approximation result in \cite[Lemma~5.7]{ChenGang2018Otso} together with \eqref{esr1r2_2} and \eqref{esr1r2_3}, and substituting these bounds into \eqref{esr1r2_1}, we obtain
			\begin{align*}
				R_1+R_2
				\le
				Ch^{k+1}|u|_{k+2}
				\Bigl(
				\|\varepsilon_h^{\bm p}\|_{\mathcal T_h}^2
				+\alpha\|h_\tau^{-1/2}(\Pi_k^\partial\varepsilon_h^\phi-\varepsilon_h^{\widehat\phi})\|_{\partial\mathcal T_h}^2
				\Bigr)^{1/2}.
			\end{align*}

			Next, we estimate $R_3+R_4$ together.  By using \eqref{esr1r2_3} we have 
			\begin{align*}
				R_3+R_4&\le 
				Ch^{k+1}|\phi|_{k+2}(\|h_{\tau}^{-1/2}(\varepsilon_h^{\phi}-\varepsilon_h^{\widehat{\phi}}
				)
				\|_{\partial\mathcal{T}_h}+\|h_{\tau}^{-1/2}(\Pi_k^{\partial}\varepsilon_h^{\phi}-\varepsilon_h^{\widehat{\phi}})\|_{\partial\mathcal{T}_h})\\
				&\le Ch^{k+1}|\phi|_{k+2}(\|\varepsilon_h^{\bm{p}}\|_{\mathcal{T}_h}^2+\alpha\|h_{\tau}^{-1/2}(\Pi_k^{\partial}\varepsilon_h^{\phi}-\varepsilon_h^{\widehat{\phi}})\|_{\partial\mathcal{T}_h}^2)^{1/2}.
		\end{align*}	}
		Applying the Cauchy-Schwarz inequality and \eqref{es_u} to the terms $ R_5-R_6 $,  we obtain:
		\begin{align*}
			R_5+R_6&\le Ch^{k+2}|u|_{k+2}\|\nabla\varepsilon_h^{\phi}\|_{\mathcal{T}_h}
			+Ch^{k+1}|u|_{k+1}\|h_{\tau}^{-1/2}(\varepsilon_h^{\phi}-\varepsilon_h^{\widehat{\phi}})\|_{\mathcal{T}_h}\\
			&\le Ch^{k+1}(|u|_{k+1}+h|u|_{k+2})(\|\varepsilon_h^{\bm{p}}\|_{\mathcal{T}_h}^2+\alpha\|h_{\tau}^{-1/2}(\Pi_k^{\partial}\varepsilon_h^{\phi}-\varepsilon_h^{\widehat{\phi}})\|_{\partial\mathcal{T}_h}^2)^{1/2}.
		\end{align*}
		Combining all the above estimates for the terms  $ R_1-R_8 $,  we obtain:
		\begin{align*}
			\|\varepsilon_h^{\bm{p}}\|_{\mathcal{T}_h}+\alpha\|h_{\tau}^{-1/2}(\Pi_k^{\partial}\varepsilon_h^{\phi}-\varepsilon_h^{\widehat{\phi}})\|_{\partial\mathcal{T}_h}\le Ch^{k+1}(|u|_{k+1}+|u|_{k+2}+|\phi|_{k+2}).
		\end{align*}
		It follows from the triangle inequality that:
		\begin{align*}
			\|\bm{p}-\bm{p}_{Ih}\|_{\mathcal{T}_h}+\alpha^{1/2}\|h_{\tau}^{-1/2}(\Pi_k^{\partial}\phi_{Ih}-\widehat{\phi}_{Ih})\|_{\mathcal{T}_h}\le Ch^{k+1}(|u|_{k+1}+|u|_{k+2}+|\phi|_{k+2}).
		\end{align*}
		
		This completes the proof of \eqref{error-p-projection}.

		\subsection*{Step 2: $ L^2 $ norm estimate by a duality argument}
		Let $ f_1\in L^2(\Omega) $ and $ (\bm \Theta, \theta) $ be the solutions of \eqref{eq-phi-u-convective-possion}. For any $ (\bm{r}_1, w_1, \mu_1) \in \bm{V}_h\times W_h\times M_h $,  one obtains:
		
		\begin{align}
			\label{dual-projection-equation--2}
			&\frac{1}{Pe}\mathcal{A}(\bm{\Pi}_k^o\bm{\Theta}, \Pi_{k+1}^o\theta, \Pi_k^{\partial}\theta;\bm{r}_1, w_1, \mu_1)+\mathcal{B}(\Pi_{k+1}^{o}\psi, \Pi_k^{\partial}\psi;w_1)\\
			&=(f_1, w_1)_{\mathcal{T}_h}
			+\frac{1}{Pe}[\langle(\bm{\Pi}_k^o\bm{\Theta}-\bm{\Theta})\cdot\bm{n}, w_1-\mu_1\rangle_{\partial\mathcal{T}_h}\nonumber\\
			&\quad
			+\alpha\langle h_{\tau}^{-1}(\Pi_{k+1}^o\theta-\theta), \Pi_k^{\partial}w_1-\mu_1\rangle_{\partial\mathcal{T}_h}]\nonumber\\
			&\quad -(\bm{\beta}\cdot(\Pi_{k+1}^o\psi-\psi), \nabla w_1)_{\mathcal{T}_h}
			+\langle \bm{\beta}\cdot\bm{n}(\Pi_k^{\partial}\psi-\psi), w_1-\mu_1\rangle_{\partial\mathcal{T}_h}\nonumber.
		\end{align}
		
		Taking $ (\bm{r}_1, w_1, \mu_1)=(\varepsilon_h^{\bm{p}}, -\varepsilon_h^{\phi}, -\varepsilon_h^{\widehat{\phi}}) $ and $ f_1=-\varepsilon_h^{\phi} $ in \eqref{dual-projection-equation--2},  one gets:
		\begin{align*}
			\|\varepsilon_h^{\phi}\|_{\mathcal{T}_h}^2&=
			\frac{1}{Pe}\mathcal{A}(\bm{\Pi}_k^o\bm{\Theta}, \Pi_{k+1}^o\theta, \Pi_k^{\partial}\theta;\varepsilon_h^{\bm{p}}, -\varepsilon_h^{\phi}, -\varepsilon_h^{\widehat{\phi}})
			+\mathcal{B}(\Pi_{k+1}^o\psi, \Pi_k^{\partial}\psi;-\varepsilon_h^{\phi})\\
			&\quad+\frac{1}{Pe}[\langle (\bm{\Pi}_k^o\bm{\Theta}-\bm{\Theta})\cdot\bm{n}, \varepsilon_h^{\phi}-\varepsilon_h^{\widehat{\phi}}\rangle_{\partial\mathcal{T}_h}+\alpha\langle h_{\tau}^{-1}(\Pi_{k+1}^o\theta-\theta), \Pi_k^{\partial}\varepsilon_h^{\phi}-\varepsilon_h^{\widehat{\phi}}\rangle_{\partial\mathcal{T}_h}]\\
			&\quad -(\bm{\beta}(\Pi_{k+1}^o\psi-\psi), \nabla\varepsilon_h^{\phi})_{\mathcal{T}_h}
			+\langle \bm{\beta}\cdot\bm{n}(\Pi_k^{\partial}\psi-\psi), \varepsilon_h^{\phi}-\varepsilon_h^{\widehat{\phi}}\rangle_{\partial\mathcal{T}_h}\\
			&=\frac{1}{Pe}\mathcal{A}(\varepsilon_h^{\bm{p}}, \varepsilon_h^{\phi}, \varepsilon_h^{\widehat{\phi}};\bm{\Pi}_k^o\bm{\Theta}, -\Pi_{k+1}^o\theta, -\Pi_k^{\partial}\theta)
			+\mathcal{B}(\Pi_{k+1}^o\psi, \Pi_k^{\partial}\psi;-\varepsilon_h^{\phi})\\
			&\quad+\frac{1}{Pe}[\langle (\bm{\Pi}_k^o\bm{\Theta}-\bm{\Theta})\cdot\bm{n}, \varepsilon_h^{\phi}-\varepsilon_h^{\widehat{\phi}}\rangle_{\partial\mathcal{T}_h}+\alpha\langle h_{\tau}^{-1}(\Pi_{k+1}^o\theta-\theta), \Pi_k^{\partial}\varepsilon_h^{\phi}-\varepsilon_h^{\widehat{\phi}}\rangle_{\partial\mathcal{T}_h}]\\
			&\quad -(\bm{\beta}(\Pi_{k+1}^o\psi-\psi), \nabla\varepsilon_h^{\phi})_{\mathcal{T}_h}
			+\langle \bm{\beta}\cdot\bm{n}(\Pi_k^{\partial}\psi-\psi), \varepsilon_h^{\phi}-\varepsilon_h^{\widehat{\phi}}\rangle_{\partial\mathcal{T}_h}\\
			&=:\sum_{i=1}^{6}S_i, 
		\end{align*}
		where the second equality we use the fact that
		\begin{align*}
			\mathcal{A}(\bm{\Pi}_k^o\bm{\Theta}, \Pi_{k+1}^o\theta, \Pi_k^{\partial}\theta;\varepsilon_h^{\bm{p}}, -\varepsilon_h^{\phi}, -\varepsilon_h^{\widehat{\phi}})=
			\mathcal{A}(\varepsilon_h^{\bm{p}}, \varepsilon_h^{\phi}, \varepsilon_h^{\widehat{\phi}};\bm{\Pi}_k^o\bm{\Theta}, -\Pi_{k+1}^o\theta, -\Pi_k^{\partial}\theta).
		\end{align*}
		Next,  we turn to estimate the terms $ \sum_{i=1}^{6}S_i $.

		First,  we use the error equation  \eqref{projection--error--phi--u} to obtain
		\begin{align}\label{remarkedadded}
			\begin{split}
				S_1&=\frac{1}{Pe}\mathcal{A}(\varepsilon_h^{\bm{p}}, \varepsilon_h^{\phi}, \varepsilon_h^{\widehat{\phi}};\bm{\Pi}_k^o\bm{\Theta}, -\Pi_{k+1}^o\theta, -\Pi_k^{\partial}\theta)\\
				&=-\mathcal{B}(\varepsilon_h^u, \varepsilon_h^{\widehat{u}};-\Pi_{k+1}^o\theta)
				-\frac{1}{Pe}\langle(\bm{\Pi}_k^o\bm{p}-\bm{p})\cdot\bm{n}, \Pi_{k+1}^o\theta-\Pi_k^{\partial}\theta\rangle_{\partial\mathcal{T}_h}\\
				&\quad-\frac{\alpha}{Pe}\langle h_{\tau}^{-1}(\Pi_{k+1}^o\phi-\phi), \Pi_k^{\partial}\Pi_{k+1}^o\theta-\Pi_k^{\partial}\theta\rangle_{\partial\mathcal{T}_h} 
				+(\bm{\beta}(\Pi_{k+1}^ou-u), \nabla \Pi_{k+1}^o\theta)_{\mathcal{T}_h}
				\\
				&\quad-\langle \bm{\beta}\cdot\bm{n}(\Pi_k^{\partial}u-u), \Pi_{k+1}^o\theta-\Pi_k^{\partial}\theta\rangle_{\partial\mathcal{T}_h}
				\\
				&=\sum_{i=1}^{5}T_i. 
			\end{split}	
		\end{align}
		
		{ For the term $T_1$, using the definition of $\mathcal{B}$, the fact that $\nabla\cdot\bm{\beta}=0$, integration by parts, and since both $\varepsilon_h^{\widehat{u}}$ and $  \theta$ are single-valued on each interior face, the corresponding interior-face contributions cancel when the faces are counted with opposite normals; on boundary faces they vanish because $\bm\beta\cdot\bm n=0$ on $\partial\Omega$. Therefore, 
			\[
			\langle \bm{\beta}\cdot\bm{n}\,\varepsilon_h^{\widehat{u}}, \theta\rangle_{\partial \mathcal{T}_h}=0.
			\]
			Hence, 
			\begin{align*}
				T_1
				&=(\bm{\beta}\cdot\nabla\varepsilon_h^u, \Pi_{k+1}^o\theta)_{\mathcal{T}_h}
				-\langle \bm{\beta}\cdot\bm{n}(\varepsilon_h^u-\varepsilon_h^{\widehat{u}}), \Pi_{k+1}^o\theta\rangle_{\partial \mathcal{T}_h}\\
				&=(\bm{\beta}\cdot\nabla\varepsilon_h^u, \Pi_{k+1}^o\theta-\theta)_{\mathcal{T}_h}
				-\langle \bm{\beta}\cdot\bm{n}(\varepsilon_h^u-\varepsilon_h^{\widehat{u}}), \Pi_{k+1}^o\theta-\theta\rangle_{\partial \mathcal{T}_h}\\
				&\quad
				+\Bigl[(\bm{\beta}\cdot\nabla\varepsilon_h^u,\theta)_{\mathcal{T}_h}
				-\langle \bm{\beta}\cdot\bm{n}(\varepsilon_h^u-\varepsilon_h^{\widehat{u}}),\theta\rangle_{\partial \mathcal{T}_h}\Bigr]\\
				&=(\bm{\beta}\cdot\nabla\varepsilon_h^u, \Pi_{k+1}^o\theta-\theta)_{\mathcal{T}_h}
				-\langle \bm{\beta}\cdot\bm{n}(\varepsilon_h^u-\varepsilon_h^{\widehat{u}}), \Pi_{k+1}^o\theta-\theta\rangle_{\partial \mathcal{T}_h}
				-(\varepsilon_h^u,\bm{\beta}\cdot\nabla\theta)_{\mathcal{T}_h}.
			\end{align*}
			By the approximation property \eqref{classical_ine_1} of $\Pi_{k+1}^o$,
			\begin{align}\label{theta-proj}
				\|\Pi_{k+1}^o\theta-\theta\|_{\mathcal{T}_h}
				+\|h_{\tau}^{1/2}(\Pi_{k+1}^o\theta-\theta)\|_{\partial\mathcal{T}_h}
				\le Ch^2\|\theta\|_{H^2(\Omega)}.
			\end{align}
			Therefore, by \eqref{theta-proj}, the Cauchy--Schwarz inequality, and
			$\|\theta\|_{H^1(\Omega)}\le C\|\theta\|_{H^2(\Omega)}$, we have
			\begin{align}\label{appT_1}
				\begin{split}
					|T_1|
					&\le Ch^2\|\theta\|_{H^2(\Omega)}
					\Bigl(
					\|\nabla\varepsilon_h^u\|_{\mathcal{T}_h}
					+\|h_{\tau}^{-1/2}(\varepsilon_h^u-\varepsilon_h^{\widehat{u}})\|_{\partial\mathcal{T}_h}
					\Bigr)
					+C\|\varepsilon_h^u\|_{\mathcal{T}_h}\|\theta\|_{H^2(\Omega)}.
				\end{split}
			\end{align}
			Taking $f_1=-\varepsilon_h^{\phi}$ and $f_2=-\varepsilon_h^{u}$ in the regularity condition \eqref{regularity--condition-phi-u}, we obtain
			\begin{align}\label{theta}
				\|\theta\|_{H^2(\Omega)}
				\le
				C\bigl(\|\varepsilon_h^{u}\|_{\mathcal{T}_h}+\|\varepsilon_h^{\phi}\|_{\mathcal{T}_h}\bigr).
			\end{align}
			Moreover,
			Using the result in \eqref{error-u-projection} and the  approximation property \eqref{classical_ine_1} of $\Pi_{k+1}^o$ we have
			\begin{align}\label{vare_u}
				\|\varepsilon_h^u\|_{\mathcal T_h} \le Ch^{k+2}|u|_{k+2}.
			\end{align}
			Substituting \eqref{theta} and \eqref{vare_u} into \eqref{appT_1}, and then using the approximation result in \cite[Lemma~5.7]{ChenGang2018Otso} together with Young's inequality, we conclude that
			\begin{align*}
				|T_1|
				\le Ch^{2k+4}|u|_{k+2}^2+\frac{1}{32}\|\varepsilon_h^{\phi}\|_{\mathcal{T}_h}^2.
		\end{align*}}

		{For the term $T_2$, notice that $ \bm{p}\in \bm{H}(\text{div};\Omega)$ 
			and satisfies $\bm{p}\cdot\bm{n}=\bm{0}$ on  $\partial\Omega$,  leading to
			\begin{align*}
				\langle \bm{p}\cdot\bm{n}, \Pi_k^{\partial}\theta\rangle_{\partial\mathcal{T}_h}=0= 
				\langle \bm{p}\cdot\bm{n}, \theta\rangle_{\partial\mathcal{T}_h}.
			\end{align*}
			It follows from \eqref{theta}   that 
			\begin{align*}
				T_2&=\frac{1}{Pe}\langle\bm{p}\cdot\bm{n}, \Pi_{k+1}^o\theta-\Pi_{k}^{\partial}\theta\rangle_{\partial \mathcal{T}_h}
				-\frac{1}{Pe}\langle\bm{\Pi}_k^o\bm{p}\cdot\bm{n}, \Pi_{k+1}^o\theta-\Pi_{k}^{\partial}\theta\rangle_{\partial\mathcal{T}_h}\\
				&=\frac{1}{Pe}\langle\bm{p}\cdot\bm{n}, \Pi_{k+1}^o\theta-\theta\rangle_{\partial \mathcal{T}_h}
				-\frac{1}{Pe}\langle\bm{\Pi}_k^o\bm{p}\cdot\bm{n}, \Pi_{k+1}^o\theta-\theta\rangle_{\partial\mathcal{T}_h}\\
				&=\frac{1}{Pe}\langle(\bm{p}-\bm{\Pi}_k^o\bm{p})\cdot\bm{n}, \Pi_{k+1}^o\theta-\theta\rangle_{\partial \mathcal{T}_h}\le Ch^{k+2}|\phi|_{k+2}\|\theta\|_{H^2(\Omega)}\le Ch^{2k+4}+\frac{1}{32}\|\varepsilon_h^{\phi}\|_{\mathcal{T}_h}^2.
			\end{align*}
			
			For the term $T_3$, using the approximation property of $ L^2 $ projection with \eqref{theta} to drive
			\begin{align*}
				T_3&=-\frac{\alpha}{Pe}\langle h_{\tau}^{-1}(\Pi_{k+1}^o\phi-\phi), \Pi_{k}^{\partial}\Pi_{k+1}^o\theta-\Pi_k^{\partial}\theta\rangle_{\partial \mathcal{T}_h}\\
				&\le Ch^{-1}\|\Pi_{k+1}^o\phi-\phi\|_{\partial\mathcal{T}_h}\|\Pi_{k}^{\partial}(\Pi_{k+1}^o\theta-\theta)\|_{\partial\mathcal{T}_h}\le Ch^{k+2}|\phi|_{k+2}\|\theta\|_{H^2(\Omega)}\\
				&\le Ch^{k+2}|\phi|_{k+2}(\|\varepsilon_h^{\phi}\|_{\mathcal{T}_h}+Ch^{k+2})\le Ch^{2k+4}+\frac{1}{32}\|\varepsilon_h^{\phi}\|_{\mathcal{T}_h}^2.
			\end{align*}
			
			Likewise,  for $ T_4 $,  we have
			\begin{align*}
				T_4&=(\bm{\beta}(\Pi_{k+1}^o u-u), \nabla\Pi_{k+1}^o\theta)_{\mathcal{T}_h}\le C\|\bm{\beta}\|_{\infty}\|\Pi_{k+1}^o u-u\|_{\mathcal{T}_h}\|\nabla\Pi_{k+1}^o\theta\|_{\mathcal{T}_h}\\
				&\le Ch^{k+2}|u|_{k+2}\|\theta\|_{H^2(\Omega)}\le Ch^{2k+4}+\frac{1}{32}\|\varepsilon_h^{\phi}\|_{\mathcal{T}_h}^2.
			\end{align*}
			
			For the term $T_5$,  since both $u$,  $\Pi_k^\partial u$ and $\Pi_k^\partial \theta$ are single-valued on each interior face, the corresponding interior-face contributions cancel when the faces are counted with opposite normals; on boundary faces they vanish because $\bm\beta\cdot\bm n=0$ on $\partial\Omega$. Therefore,
			\begin{align*}
				\langle \bm{\beta}\cdot\bm{n}u, \Pi_k^{\partial}\theta\rangle_{\partial\mathcal{T}_h}&=0= 
				\langle \bm{\beta}\cdot\bm{n}u, \theta\rangle_{\partial\mathcal{T}_h}, \\
				\langle \bm{\beta}\cdot\bm{n}\Pi_{k}^{\partial}u, \Pi_k^{\partial}\theta\rangle_{\partial\mathcal{T}_h}&=0= 
				\langle \bm{\beta}\cdot\bm{n}\Pi_{k}^{\partial}u, \theta\rangle_{\partial\mathcal{T}_h}.
			\end{align*}
			Thus one can estimate $ T_5 $ together with \eqref{theta} as:
			\begin{align*}
				T_5&=\langle\bm{\beta}\cdot\bm{n}u, \Pi_{k+1}^o\theta-\Pi_k^{\partial}\theta\rangle_{\partial \mathcal{T}_h}
				-\langle\bm{\beta}\cdot\bm{n}\Pi_{k}^{\partial}u, \Pi_{k+1}^o\theta-\Pi_k^{\partial}\theta\rangle_{\partial \mathcal{T}_h}\\
				&=\langle\bm{\beta}\cdot\bm{n}u, \Pi_{k+1}^o\theta-\theta\rangle_{\partial \mathcal{T}_h}
				-\langle\bm{\beta}\cdot\bm{n}\Pi_{k}^{\partial}u, \Pi_{k+1}^o\theta-\theta\rangle_{\partial \mathcal{T}_h}\\
				&=\langle\bm{\beta}\cdot\bm{n}(u-\Pi_{k}^{\partial}u), \Pi_{k+1}^o\theta-\theta\rangle_{\partial \mathcal{T}_h}\\
				&\le C\|\bm{\beta}\|_{\infty}\|u-\Pi_{k}^{\partial}u\|_{\partial\mathcal{T}_h}\|\Pi_{k+1}^o\theta-\theta\|_{\partial\mathcal{T}_h}\\
				&\le Ch^{k+2}|u|_{k+1}\|\theta\|_{H^2(\Omega)}\\
				&\le Ch^{2k+4}+\frac{1}{32}\|\varepsilon_h^{\phi}\|_{\mathcal{T}_h}^2.
			\end{align*}
		}
		Combing all estimates for the terms $ \{T_i\}_{i=1}^5$,  one derives
		\begin{align*}
			S_1\le Ch^{2k+4}(|\phi|_{k+2}^2+|u|_{k+1}^2+|u|_{k+2}^2)+\frac{3}{32}\|\varepsilon_h^{\phi}\|_{\mathcal{T}_h}^2.
		\end{align*}	
		Using the regularity conditions \eqref{regularity--condition-phi-u} and \eqref{regularity--condition-u},  we obtain:
		\begin{align*}
			S_3+S_4&\le Ch\|\bm{\Theta}\|_{H^1(\Omega)}\|h_{\tau}^{-1/2}(\varepsilon_h^{\phi}-\varepsilon_h^{\widehat{\phi}})\|_{\partial\mathcal{T}_h}+Ch\|\theta\|_{H^2(\Omega)}\|h_{\tau}^{-1/2}(\Pi_k^{\partial}\varepsilon_h^{\phi}-\varepsilon_h^{\widehat{\phi}})\|_{\partial\mathcal{T}_h}\\
			&\le C(\|\varepsilon_h^u\|_{\mathcal{T}_h}+\|\varepsilon_h^{\phi}\|_{\mathcal{T}_h})h^{k+2}(|u|_{k+1}+|u|_{k+2}+|\phi|_{k+2})\\
			&\le
			{Ch^{2k+4}|u|_{k+1}^2}+ Ch^{2k+4}|u|_{k+2}^2+Ch^{2k+4}|\phi|_{k+2}^2+\frac{1}{32}\|\varepsilon_h^{\phi}\|_{\mathcal{T}_h}^2.
		\end{align*}
		Notice the fact 
			\begin{align*}
				\langle\bm{\beta}\cdot\bm{n}\Pi_{k}^{\partial}\psi, \varepsilon_h^{\widehat{\phi}}\rangle_{\partial\mathcal{T}_h}=0=\langle\bm{\beta}\cdot\bm{n}\psi, \varepsilon_h^{\widehat{\phi}}\rangle_{\partial\mathcal{T}_h}, 
			\end{align*}	
			using the definition of $ \mathcal{B} $ and integration by parts  to drive
			\begin{align*}
				S_2+S_5+S_6&=(\bm{\beta}\cdot\Pi_{k+1}^o\psi, \nabla\varepsilon_h^{\phi})_{\mathcal{T}_h}
				-\langle\bm{\beta}\cdot\bm{n}\Pi_{k}^{\partial}\psi, \varepsilon_h^{\phi}\rangle_{\partial\mathcal{T}_h}\\
				&\quad -(\bm{\beta}(\Pi_{k+1}^o\psi-\psi), \nabla\varepsilon_h^{\phi})_{\mathcal{T}_h}
				+\langle\bm{\beta}\cdot\bm{n}(\Pi_{k}^{\partial}\psi-\psi), \varepsilon_h^{\phi}-\varepsilon_h^{\widehat{\phi}}\rangle_{\partial \mathcal{T}_h}
				\\
				&=(\bm{\beta}\psi, \nabla\varepsilon_h^{\phi})_{\mathcal{T}_h}
				-\langle\bm{\beta}\cdot\bm{n}\psi, \varepsilon_h^{\phi}\rangle_{\partial \mathcal{T}_h}
				-\langle\bm{\beta}\cdot\bm{n}\Pi_k^{\partial}\psi, \varepsilon_h^{\widehat{\phi}}\rangle_{\partial\mathcal{T}_h}
				+\langle\bm{\beta}\cdot\bm{n}\psi, \varepsilon_h^{\widehat{\phi}}\rangle_{\partial\mathcal{T}_h}\\
				&=-(\bm{\beta}\cdot\nabla\psi, \varepsilon_h^{\phi})_{\mathcal{T}_h}\\
				&\le C\|\beta\|_{\infty}\|\psi\|_{H^1(\Omega)}\|\varepsilon_h^{\phi}\|_{\mathcal{T}_h}\\
				&\le Ch^{2k+4}|u|_{k+2}^2+\frac{1}{32}\|\varepsilon_h^{\phi}\|_{\mathcal{T}_h}^2, 
			\end{align*}	
			where the second from last inequality we use the regularity condition 
			\begin{align*}
				\|\psi\|_{H^2(\Omega)}\le C\|\varepsilon_h^{u}\|_{\mathcal{T}_h}\le Ch^{k+2}|u|_{k+2}.
			\end{align*}
		
		Combining all above estimates for $\{S_i\}_{i=1}^6$,  one derives
		\begin{align}\label{appendix-vare-h-phi}
			\|\varepsilon_h^{\phi}\|_{\mathcal T_h}\le Ch^{k+2}(|u|_{k+1}+|u|_{k+2}+|\phi|_{k+2}).
		\end{align}
		Together with the triangle inequality,  \eqref{error-phi-projection} follows.
		
		\begin{remark}\label{projection--proof--remark}
			We note that if the operator $\mathcal{B}$ is used in the form of \eqref{def_B_1},  then the conclusions of \eqref{error-u-projection}–\eqref{error-p-projection} remain valid. However,  the convergence rate of $\|\phi - \phi_{I h}\|_{\mathcal{T}_h}$ is suboptimal when $k = 0$,  but optimal for $k \geq 1$.
			
			The proof proceeds similarly to the argument above. By following the previous proof closely,  we find that an additional term arises at each step. However,  these extra terms do not degrade the overall convergence order—except for one specific term that is added to the quantity $R_1$ in \eqref{remarkedadded}. This additional term is given by: 
			\begin{align*} 
				T_6 = -\langle \tau_c(\Pi_{k+1}^o u - \Pi_k^{\partial} u),  \Pi_{k+1}^o \theta - \Pi_k^{\partial} \theta \rangle_{\partial \mathcal{T}_h}. \end{align*}
			We now estimate the term $T_6$:
			\begin{align*}
				T_6&\le C\|\Pi_{k+1}^ou-\Pi_{k}^{\partial}u\|_{\partial\mathcal{T}_h}\|\Pi_{k+1}^o\theta-\Pi_k^{\partial}\theta\|_{\partial\mathcal{T}_h}\\
				&\le C(\|\Pi_{k+1}^ou-u\|_{\partial\mathcal{T}_h}+\|u-\Pi_{k}^{\partial}u\|_{\partial\mathcal{T}_h})(\|\Pi_{k+1}^o\theta-\theta\|_{\partial\mathcal{T}_h}+\|\theta-\Pi_k^{\partial}\theta\|_{\partial\mathcal{T}_h})\\
				&\le C(h^{k+3/2}|u|_{k+2}+h^{k+1/2}|u|_{k+1})h^{1/2+\min\{1, k\}}\|\theta\|_2\\
				&\le Ch^{k+1+\min\{1, k\}}\|\theta\|_2\\
				&\le Ch^{k+1+\min\{1, k\}}(\|\varepsilon_h^u\|_{\mathcal{T}_h}+\|\varepsilon_h^{\phi}\|_{\mathcal{T}_h})\\
				&\le Ch^{2k+2+\min\{2, 2k\}}+\frac{1}{32}\|\varepsilon_h^{\phi}\|_{\mathcal{T}_h}^2.
			\end{align*}
			Hence,  the optimal convergence order for $\|\varepsilon_h^{\phi}\|_{\mathcal{T}_h}$ fails to hold when $k = 0$,  but is preserved when $k \ge 1$.
		\end{remark}

		\section{Conclusion} In this work,  we propose an HDG method combined with a convex-concave splitting temporal discretization to solve the convective Cahn-Hilliard equation. We perform a stability analysis and prove that our numerical scheme is unconditionally stable. Additionally,  we establish optimal convergence rates in the $L^2$ norm for both the scalar and flux variables for any polynomial degree $k \geq 0$.
		
		There are several promising directions for future research. In the current setting,  the velocity field is prescribed. A natural extension is to couple the velocity with the fluid flow,  governed by the Navier-Stokes equations. Another important direction is the development of efficient solvers for the proposed system. We aim to explore time-parallel techniques to accelerate the computations. Furthermore,  designing fast solvers for the underlying linear systems remains an important challenge.

		
		\bibliographystyle{plain}
		\bibliography{mybib}
	\end{document}